\newcommand\R{{\mathbb{R}}}
\newcommand\C{{\mathbb{C}}}
\newcommand\Z{{\mathbf{Z}}}
\newcommand\Q{{\mathbf{Q}}}
\renewcommand\P{{\mathbf{P}}}
\newcommand\E{{\mathbf{E}}}
\newcommand\Var{\mathbf{Var}}
\renewcommand\Im{{\operatorname{Im}}}
\renewcommand\Re{{\operatorname{Re}}}
\newcommand\eps{{\varepsilon}}
\newcommand\bv{\mathbf v}
\newcommand\bw{\mathbf w}
\theoremstyle{plain}
\theoremstyle{definition}
  \newtheorem{remark}[theorem]{Remark}
\begin{document}

\title{Local universality of zeroes of random polynomials}
\shorttitle{Universality of zeroes of polynomials}

\author{Terence Tao\affil{1} and Van Vu\affil{2}}
\abbrevauthor{T. Tao and V. Vu}

\address{\affilnum{1}Department of Mathematics, UCLA, Los Angeles CA 90095-1555 and \affilnum{2} Department of Mathematics, Yale, New Haven, CT 06520}

\correspdetails{tao@math.ucla.edu}

\begin{abstract} 
In this paper, we establish some local universality results concerning the correlation functions of the zeroes of random polynomials with independent coefficients. More precisely, consider two random polynomials 
$f =\sum_{i=1}^n c_i \xi_i z^i$ and $\tilde f =\sum_{i=1}^n c_i \tilde \xi_i z^i$, where the $\xi_i$ and $\tilde \xi_i$ are iid random variables that match moments to second order, the coefficients $c_i$ are deterministic, and the degree parameter $n$ is large.  Our results show, under some light conditions on the coefficients 
 $c_i$ and the tails of $\xi_i, \tilde \xi_i$, that the correlation functions of the zeroes of $f$ and $\tilde f$ are approximately the same. 
As an application, we give some answers to the classical  question

%\vskip2mm
\centerline{  ``{\it How many zeroes of a random polynomials are real ?}''  } 
%\vskip2mm 
\noindent  for several classes of random polynomial models.

Our analysis relies on  a general {\it replacement principle}, motivated by some recent work in random matrix theory.
This principle enables one to compare the correlation functions of two random functions $f$ and $\tilde f$ if their log magnitudes $\log |f|, \log|\tilde f|$ are close in distribution, and if some non-concentration bounds are obeyed.
\end{abstract}

\received{}
\revised{}
\accepted{}
\communicated{}

\maketitle

% \newpage

\setcounter{tocdepth}{1}
\tableofcontents

% \newpage

\section{Introduction} \label{section:introduction} 

\subsection{Models of random polynomials} 

In this paper we study the distribution of the zeroes of a random polynomial $f=f_n$ when the degree parameter $n$ is large (or goes asymptotically to infinity).  For sake of exposition,  we will focus on a simple model of random polynomials in which the coefficients are independent and derived from a common atom distribution, although several of our results extend to more general models. 

\begin{definition}[Random polynomials]\label{Rand-def}  Let $n$ be a positive integer, let $c_0,\ldots,c_n$ be deterministic complex numbers, and let $\xi$ be a complex random variable (which we call the \emph{atom distribution}) of mean zero and finite non-zero variance.
Given the coefficients $c_0,\ldots,c_n$ and atom distribution $\xi$, we associate the random polynomial $f=f_n = f_{n,\xi}: \C \to \C$ defined by the formula
$$ f(z) := \sum_{i=0}^n c_i \xi_i z^i,$$
where $\xi_0,\ldots,\xi_n$ are jointly independent copies of $\xi$.
\end{definition}

In practice, we will usually normalize the atom distribution $\xi$ to have unit variance; note that this normalization clearly does not affect the zeroes of $f$.  In some literature, one replaces either the bottom coefficient $\xi_0$ or the top coefficient $\xi_n$ with the constant $1$. This generally has a negligible impact on the distribution of the zeroes in the large $n$ limit; however,  we shall avoid such normalizations here (although this has the consequence that the polynomial $f$ may occasionally have degree less than $n$, or even vanish entirely).  Our focus in this paper will primarily be on the \emph{universality phenomenon} in the context of zeroes of such random polynomials, which roughly speaking asserts that the (appropriately normalized) asymptotic behavior of these zeroes as $n \to \infty$ should become independent of the choice of atom distribution.

We isolate three specific choices of coefficients $c_i$ that have been studied for a long time: 

\begin{itemize} 
\item[(i)] \emph{Flat polynomials} or \emph{Weyl polynomials} are polynomials associated to the coefficients $c_i := \sqrt {\frac{ 1}{ i!} } $. 
\item[(ii)] \emph{Elliptic polynomials} or \emph{binomial polynomials} are polynomials associated to the coefficients $c_i:= \sqrt{\binom{n}{i}} $. 
\item[(iii)] \emph{Kac polynomials} are polynomials associated to the coefficients $c_i := 1$.
\end{itemize}  

One can view Kac polynomials as the special case $L=1$ of \emph{hyperbolic polynomials} in which $c_i:= \sqrt {\frac{ L(L+1) \cdots (L+i-1)}{i!} }$ for some parameter $L > 0$, but for simplicity we will focus on the classical Kac polynomial case as a proxy for the more general hyperbolic case.

These polynomials have been intensively studied, particularly in the case when the atom distribution $\xi$ is either the real gaussian $N(0,1)_\R$ or the complex gaussian $N(0,1)_\C$; see \cite{kab, manju}.  As we shall recall later, the situation with the Kac polynomials is somewhat special when compared to the other models described above (its zeroes tend to cluster around the unit circle, instead of being distributed throughout a two-dimensional region in the plane).
 
If $f=f_n$ is a random polynomial of the form described in Definition \ref{Rand-def}, then $f$ has degree at most $n$.  If $f$ is not identically zero, then from the fundamental theorem of algebra it has $\operatorname{deg}(f)$ zeroes in the complex plane $\C$ (counting multiplicity).  We adopt the convention that $f$ also has $n - \operatorname{deg}(f)$ zeroes at infinity, and when $f$ is identically zero we adopt the convention that $f$ has $n$ zeroes at infinity and no zeroes in $\C$. With these (admittedly artificial) conventions, $f$ thus always has $n$ zeroes $\zeta_1,\ldots,\zeta_n$ (ordered in some arbitrarily chosen fashion, e.g. lexicographically) in the Riemann sphere $\C \cup \{\infty\}$ (counting multiplicity), so that $\{\zeta_1,\ldots,\zeta_n\}$ may be viewed as a point process in $\C \cup \{\infty\}$.  We will sometimes refer to the set $\{\zeta_1,\ldots,\zeta_n\}$ as the \emph{spectrum} of $f$.

\subsection{Number of real zeroes} 

With $f=f_n$ as above, and any subset $\Omega$ of $\C$, write
$$ N_\Omega := |\{ 1\leq i \leq n: \zeta_i \in \Omega \} |$$
for the number of zeroes of $f$ in $\Omega$ (counting multiplicity); in particular, $N_\R$ is the number of real zeroes. This is a random variable taking values in $\{0,\ldots,n\}$.  The issue of understanding the typical size of $N_\R$ was already raised by Waring as far back as 1782 (\cite[page 618]{To}, \cite{Kostlan}), and has generated a huge amount of literature, of which we now pause to give a (incomplete and brief) survey.  The statistic $N_\R$ is of interest primarily in the case when the atom distribution $\xi$ and the coefficients $c_i$ are both real-valued, since in the genuinely complex case one usually expects that none of the zeroes of the associated polynomial $f$ will be real.

Most earlier works focused on the case of Kac polynomials, which are easier to analyze but have an atypical behavior compared to other random polynomial models.  One of the first results in this context is by \cite{BP}, who  studied the case of Kac polynomials with $\xi$ uniformly distributed in $\{-1,0,1\}$, and established the somewhat weak upper bound
$$ \E N_\R \ll n^{1/2}$$
where we use the usual asymptotic notation $X=O(Y)$ or $X \ll Y$ to denote the bound $|X| \leq CY$ where $C$ is independent of $Y$.  This bound is not sharp, and Kac polynomials actually have a remarkably small number of real zeroes.  Indeed, in a series of papers \cite{lo,  lo-3, lo-4, lo-2}, Littlewood and Offord proved that 
 for Kac polynomials with  many basic atom distributions (such as gaussian, Bernoulli or uniform on $[-1,1]$), one has the bounds
$$ \frac{\log n}{\log \log \log n} \ll N_\R \ll \log^2 n $$
with probability $1-o(1)$, where we use $o(1)$ to denote a quantity that goes to $0$ as $n \to \infty$.
  
Later, \cite{kac-0} found an exact formula for $\E N_\R$ in the case that $\xi_i$ are real gaussians, and showed that
$$ \E N_\R = \left(\frac{2}{\pi} +o(1)\right) \log n$$
in this case (see \cite{Wa}, \cite{EK} for  more precise asymptotics).
 
This asymptotic has been extended to Kac polynomials with more general atom distributions.  In a subsequent paper \cite{kac-1}, the result was extended to the case when $\xi$ has the uniform distribution on $[-1,1]$.  Erd\"os and Offord  \cite{EO} extended the result to the Bernoulli distribution case (i.e. when $\xi$ is uniform on $\{-1,+1\}$).  \cite{Stev}  extended the asymptotics  for a wide class of distributions, and finally \cite{IM1, IM2}  extended 
 the result to all mean-zero distributions in the domain of attraction of the normal law, with the extra assumption that 
$\P(\xi=0 )=0$.  In \cite{Mas1, Mas2} it was also proved that if $\P( \xi=0) =0$ and $\E |\xi|^{2 +\eps} < \infty$ for some constant $\eps >0$ then the variance of $N_\R$ is $(\frac{4}{\pi} (1- \frac{2}{\pi}) + o(1)) \log n$, and furthermore established a central limit theorem for $N_\R$. In \cite{Dembo}, the probability that $N_\R=k$ for any fixed $k$ was computed.  There are also some non-trivial deterministic bounds bounds for the maximum value of $N_\R$ (when the coefficients are, say, drawn from $\{-1,0,1\}$) which we will not describe in detail here, but see e.g. \cite{erd} for some recent results in this direction.
 
For non-Kac models such as the flat or elliptic polynomial ensembles, the behavior of $N_\R$ changes considerably.  These types of random polynomial models were already studied to some extent in the classical papers of Littlewood and Offord, but most of the work on these models appeared later, partially motivated by connections to physics \cite{BBL} and random analytic functions  \cite{manju} or problems in numerical analysis and   computation theory \cite{Kostlan, SSm}. In particular, many researchers consider the elliptic (or binomial) polynomial the most ``natural'' random polynomial \cite[Section 1]{EK}, \cite{Kostlan, SSm}; one reason for this is that in the case when the atom distribution $\xi$ is the complex Gaussian $N(0,1)_\C$, the distribution of the zeroes of the associated random polynomial is invariant with respect to rotations of the Riemann sphere $\C \cup \{\infty\}$ (see e.g. \cite[Proposition 2.3.4]{manju}).
 
It is known that when the atom distribution $\xi$ is the real Gaussian $N(0,1)_\R$, one has
$$ \E N_\R = \left(\frac{2}{\pi} + o(1)\right) \sqrt{n}$$
for flat (Weyl) polynomials and
$$ \E N_\R =  \sqrt{n}$$
for elliptic (binomial) polynomials; see \cite{EK} for a nice geometric proof of these facts.  Thus one has substantially more zeroes for such polynomials than in the Kac case when $n$ is large.  However, unlike the situation with Kac polynomials, the extension of these results to more general (non-gaussian)
 distributions was not fully understood.  The reader is referred to  \cite{IM3, IM4, LS1, LS2, Wil, bleher, shiff2} and the books \cite{BSbook, Far} for several results and further discussion. 

\subsection{Distribution of zeroes: Correlation functions} 

We now turn to a  popular way to study the distribution of zeroes of random polynomials, namely by investigating  their \emph{correlation functions}.  To define these functions, let us first consider the complex case in which the coefficients $c_i$ and the atom distribution $\xi$ are not required to be real valued.  In this case the point process $\{\zeta_1,\ldots,\zeta_n\}$ of zeroes of a random polynomial $f=f_n$ can be described using the (complex) \emph{$k$-point correlation functions} $\rho^{(k)} = \rho^{(k)}_f: \C^k \to \R^+$, defined for any fixed natural number $k$ by requiring that
\begin{equation}\label{cord}
 \E \sum_{i_1,\ldots,i_k \hbox{ distinct}} \varphi(\zeta_{i_1},\ldots,\zeta_{i_k}) = \int_{\C^k} \varphi(z_1,\ldots,z_k) \rho^{(k)}(z_1,\ldots,z_k)\ dz_1\ldots dz_k
 \end{equation}
for any continuous, compactly supported, test function $\varphi: \C^k \to \C$, with the convention that $\varphi(\infty)=0$;
see e.g. \cite{manju, AGZ}.  This definition of $\rho^{(k)}$ is clearly independent of the choice of ordering $\zeta_1,\ldots,\zeta_n$ of the zeroes. Note that if the random polynomial $f$ has a discrete law rather than a continuous one, then $\rho^{(k)}_{f}$ needs to be interpreted as a measure\footnote{We point out one subtlety in the discrete case: the summation \eqref{cord} requires the \emph{indices} $i_1,\ldots,i_k$ to be distinct, but allows the \emph{zeroes} $\zeta_{i_1},\ldots,\zeta_{i_k}$ to be repeated; thus for instance if $f$ is the deterministic polynomial $z^n$ then $\rho^{(k)}$ is $\frac{n!}{(n-k)!}$ times the Dirac mass at the origin in $\C^k$.  This convention allows for identities such as $(n-k) \rho^{(k)}(z_1,\ldots,z_k) = \int_{\C \cup \infty} \rho^{(k+1)}(z_1,\ldots,z_{k+1})\ dz_{k+1}$ to be extended to the discrete setting (after being interpreted in an appropriate distributional sense); it also ensures that the distribution functions $\rho^{(k)}$ vary continuously (in the vague topology) with respect to perturbations of law of the random polynomial $f$ (again measured in the vague topology).  Of course, when $f$ has a continuous distribution, the zeroes are almost surely simple, and this subtlety becomes irrelevant.} rather than as a function.  

\begin{remark} When $\xi$ has a continuous complex distribution, the $c_i$ are non-zero, then the zeroes are almost surely simple.  In this case if  $z_1,\ldots,z_k$ are distinct, and one can interpret $\rho^{(k)}(z_1,\ldots,z_k)$ as the unique quantity such that the probability that there is a zero in each of the disks $B(z_i,\eps)$ for $i=1,\ldots,k$ is $(\rho^{(k)}(z_1,\ldots,z_k)+o_{\eps \to 0}(1)) (\pi \eps^2)^k$ in the limit $\eps \to 0$.
\end{remark}

When the random polynomials $f$ have real coefficients,  the zeroes $\zeta_1,\ldots,\zeta_n$ are symmetric around the real axis, and one expects several of the zeroes to lie on this axis.  Because of this, it is not as natural to work with the complex $k$-point correlation functions $\rho^{(k)}_f$, as they are likely to become singular on the real axis. Instead, we divide the complex plane $\C$ into three pieces $\C = \R \cup \C_+ \cup \C_-$, with $\C_+ := \{ z \in \C: \Im(z)>0\}$ being the upper half-plane and $\C_- := \{z \in \C: \Im(z)<0\}$ being the lower half-plane.  By the aforementioned symmetry, we may restrict attention to the zeroes in $\R$ and $\C_+$ only.  For any natural numbers $k,l \geq 0$, we then define the \emph{mixed $(k,l)$-correlation function} $\rho^{(k,l)} = \rho^{(k,l)}_{f}: \R^k \times (\C_+ \cup \C_-)^l \to \R^+$ of a random polynomial $f$ to be the function defined by the formula
\begin{eqnarray} \label{cord1}
& \E \sum_{i_1,\ldots,i_k \hbox{ distinct}} \sum_{j_1,\ldots,j_l \hbox{ distinct}} \varphi(\zeta_{i_1,\R},\ldots,\zeta_{i_k,\R},\zeta_{j_1,\C_+},\ldots,\zeta_{j_l,\C_+})  \\  \nonumber
&\quad = \int_{\R^k} \int_{\C_+^l} \varphi(x_1,\ldots,x_k,z_1,\ldots,z_l) \rho^{(k,l)}_{f}(x_1,\ldots,x_k,z_1, \ldots,z_l)\ dz_1\ldots dz_l dx_1 \ldots dx_k 
\end{eqnarray}	
for any continuous, compactly supported test function $\varphi: \R^k \times \C^l \to \C$ (note that we do not require $\varphi$ to vanish at the boundary of $\C_+^l$), $\zeta_{i,\R}$ runs over an arbitrary enumeration of the real zeroes of $f_n$, and $\zeta_{j,\C_+}$ runs over an arbitrary enumeration of the zeroes of $f_n$ in $\C_+$.  This defines $\rho^{(k,l)}$ (in the sense of distributions, at least) for $x_1,\ldots,x_k \in \R$ and $z_1,\ldots,z_l \in \C_+$; we then extend $\rho^{(k,l)}(x_1,\ldots,x_k,z_1,\ldots,z_l)$ to all other values of $x_1,\ldots,x_k \in \R$ and $z_1,\ldots,z_l \in \C_+ \cup \C_-$ by requiring that $\rho^{(k,l)}$ is symmetric with respect to conjugation of any or all of the $z_1,\ldots,z_l$ parameters.  Again, we permit $\rho^{(k,l)}$ to be a measure\footnote{As in the complex case, we allow the real zeros $\zeta_{i_1,\R},\ldots,\zeta_{i_k,\R}$ or the complex zeroes $\zeta_{j_1,\C_+},\ldots,\zeta_{j_l,\C_+}$ to have multiplicity; it is only the indices $i_1,\ldots,i_k,j_1,\ldots,j_l$ that are required to be distinct.  In particular, in the discrete case it is possible for $\rho^{(0,2)}(z_1,z_2)$ (say) to have non-zero mass on the diagonal $z_1=z_2$ or the conjugate diagonal $z_1 = \overline{z_2}$, if $f$ has a repeated complex eigenvalue with positive probability.} instead of a function when the random polynomial $f_n$ has a discrete distribution.

In the case $l=0$, the correlation functions $\rho^{(k,0)}$ for $k \geq 1$ provide (in principle, at least) all the essential information about the distribution of the real zeroes, which as mentioned previously, was   the original motivation of the very first papers studying random polynomials.   For instance, one easily verifies the identity
\begin{equation}\label{enr}
 \E N_\R = \int_\R \rho^{(1,0)}(x)\ dx
\end{equation}
and similarly
\begin{equation}\label{varr}
 \Var N_\R = \int_\R \int_\R \rho^{(2,0)}(x,y) -\rho^{(1,0)}(x) \rho^{(1,0)}(y) \ dx \ dy  + \int_\R \rho^{(1,0)}(x)\ dx
\end{equation}

\begin{remark} When $\xi$ has a continuous real distribution, the $c_i$ are non-zero real, then the zeroes are almost surely simple. If  the $x_1,\ldots,x_k \in \R$ are distinct, and the $z_1,\ldots,z_l \in \C_+$ are distinct, then  one can interpret $\rho^{(k,l)}(x_1,\ldots,x_k,z_1,\ldots,z_l)$ as the unique quantity such that the probability that there is a zero in each of the intervals $[x_i-\eps,x_i+\eps]$ and disks $B(z_j,\eps)$ for $i=1,\ldots,k$ and $j=1,\ldots,l$ is $(\rho^{(k,l)}(x_1,\ldots,x_k,z_1,\ldots,z_l)+o_{\eps \to 0}(1)) (2\eps)^k (\pi \eps^2)^l$ in the limit $\eps \to 0$.
\end{remark}

\begin{remark}  In principle, one could express the complex correlation functions in a distributional sense in terms of the real correlation functions, for instance we have
$$ \rho^{(1)}(z) = \rho^{(0,1)}(z) + \rho^{(1,0)}(\Re z) \delta(\Im z)$$
in the sense of distributions, where $\delta$ is the Dirac distribution at $0$, with similar (but significantly more complicated) identities for $\rho^{(k)}$ when $k>1$, reflecting the many combinatorial possibilities for $k$ complex zeroes to lie on the real line, or to be complex conjugates of each other.  We will however not use such identities in this paper.
\end{remark}

\subsection{Universality}  

In the case when the atom distribution $\xi$ is a real or complex gaussian, the correlation functions $\rho^{(k,l)}$ (in the real case) or $\rho^{(k)}$ (in the complex case) can be computed explicitly using tools such as the Kac-Rice formula; see \cite{manju} or Lemma \ref{krf} below.  When the atom distribution is not gaussian, the Kac-Rice formula is still available,  but is considerably less tractable.  Nevertheless, it has been widely believed that the asymptotic behavior of the correlation functions in the non-gaussian case should match that of the gaussian case once one has performed appropriate normalizations, at least if the atom distribution $\xi$ is sufficiently short-tailed. This type of meta-conjecture is commonly referred to as the \emph{universality phenomenon}. 

At macroscopic (or \emph{global}) scales (comparable to the diameter of the bulk of the set of zeroes), universality results for polynomials given by Definition \ref{Rand-def} were established recently in \cite{kab}.  For instance, they established the analogue of the circular law for Weyl polynomials given only a mild log-integrability condition for the atom distribution (see \cite[Theorem 2.3]{kab}), as well as many other results of this nature; see \cite{kab} for full details.

In this paper we will be concerned primarily with universality of correlation functions at the microscopic (or \emph{local}) scale, comparable to the mean spacing between zeroes; through formulae such as \eqref{enr}, this also can lead to some partial universality results for quantities such as $N_\R$.
  At such microscopic scales, the most general previous result we found concerning universality is due to \cite[Theorem 7.2]{bleher}, who considered binomial polynomials in which the atom distribution $\xi$ was real-valued, with unit variance, and was sufficiently smooth and rapidly decaying (see \cite[Theorem 7.2]{bleher} for the precise technical conditions required on $\xi$).  With these hypotheses, they showed that the pointwise limit of the normalized correlation function $n^{-k/2} \rho^{(k,0)}(a+\frac{x_1}{\sqrt{n}},\ldots,a+\frac{x_k}{\sqrt{n}})$ for any fixed $k,a,x_1,\ldots,x_k$ (with $a \neq 0$) as $n \to \infty$ was independent of the choice of $\xi$ (with an explicit formula for the limiting distribution).  Again, see \cite[Theorem 7.2]{bleher} for a precise statement.  One of the main tools used in that argument was the Kac-Rice formula.

In this paper, we  first introduce a  new method to prove universality, which makes no distinction between continuous and discrete random variables (and in particular, avoids the use of the Kac-Rice formula, except when verifying a certain technical level repulsion estimate in the real gaussian case). As a matter of fact, we will only require some bounded moment assumption on the atom distribution.  This approach relies on a general \emph{replacement principle}, which we will present in the next section. This   reduces the task of establishing universality for zeroes of a random polynomial $f$ to that of establishing universality for the log-magnitude $\log |f(z)|$ of that polynomial evaluated at various points $z$, together with that of verifying some technical eigenvalue repulsion bounds.  This principle was implicitly introduced in our previous paper \cite{tv-iid} in the context where $f$ was the characteristic polynomial of a random matrix (and is thus can be viewed as a microsopic analogue to the macroscopic replacement principle in \cite[Theorem 2.1]{TVcir} to establish the circular law for various ensembles of random matrices), but applies for more general random matrix models, and is in fact particularly easy to use for the models in Definition \ref{Rand-def} since $f(z)$ is just the sum of independent random variables for each given $z$ in this case.
As applications of this principle we will establish universality results for all the classical ensembles listed above.  We would like to emphasize here 
that while in this paper we focus on random polynomials with independent coefficients, our replacement principle {\it does not}  require this assumption. For example, it can be applied to characteristic polynomials 
of random matrices \cite{tv-iid}.

\subsection{Notation} 

We use $1_E$ to denote the indicator of $E$, thus $1_E$ equals $1$ when $E$ is true and $0$ when $E$ is false.  We also write $1_\Omega(x)$ for $1_{x \in \Omega}$.

We use $\sqrt{-1}$ to denote the unit imaginary, in order to free up the symbol $i$ as an index of summation.  As we will be using two-dimensional integration on the complex plane $\C := \{ z = x+\sqrt{-1} y: x,y \in \R \}$ far more often than we will be using contour integration, we use $dz := dx dy$ to denote Lebesgue measure on the complex numbers, rather than the complex line element $dx + \sqrt{-1} dy$.  For $z \in \C$ and $r>0$, we use $B(z,r) := \{ w \in\C: |z-w| <r \}$ to denote the open disk of radius $r$ centered at $z$.

If $G: \R^k \to \C$ is a function and $a \geq 0$, we use $\nabla^a G$ to denote the tensor $(\frac{\partial^a}{\partial x_{i_1} \ldots \partial x_{i_a}} G)_{1 \leq i_1,\ldots,i_a \leq k}$; in particular,
$$ |\nabla^a G| := \left(\sum_{1 \leq i_1,\ldots,i_a \leq k} \left|\frac{\partial^a}{\partial x_{i_1} \ldots \partial x_{i_a}} G\right|^2\right)^{1/2}.$$

 Following \cite{TVlocal}, we  say that two complex random variables $\xi, \xi'$ \emph{match moments to order $m$} if one has
$$ \E \Re(\xi)^a \Im(\xi)^b = \E \Re(\xi')^a \Im(\xi')^b$$
for all natural numbers $a,b \geq 0$ with $a+b \leq m$.

\section{Replacement principle, complex case} 

Our replacement principle asserts, roughly speaking, that the $k$-correlation functions of the zeroes of two random polynomials $f$ and $\tilde f$ are asymptotically the same provided that 

\begin{itemize} 

\item[(i)] (Comparability of log-magnitudes) The joint distribution of $\log|f|$ at a 
few values is close  to the joint distribution of $\log|\tilde f|$ at those same values; and

\item[(ii)] (Non-clustering property) $f$ and $\tilde f$  do not have too many zeroes  concentrating in a small  region.  

\end{itemize} 

\vskip2mm

We will also need a mild non-degeneracy condition that prevents $f$ or $\tilde f$ from vanishing identically too often, but this hypothesis is easily verified in practice.

Moreover, we can show that the non-clustering property  holds if the variables  $\log |f(z)|$ and $\log |\tilde f(z)|$ are strongly concentrated around 
a suitable deterministic function $G(z)$ (see Proposition \ref{critloc} below).  So, in order to compare the distribution of the zeroes of $f$ and $\tilde f$, all we need is to study the distribution of the log-magnitude functions $\log |f(z)|$ and $\log |\tilde f(z)|$ for various choices of parameter $z$.

When the random polynomials $f$ and $\tilde f$ have real coefficients, we  can  prove a similar replacement principle for  the mixed $(k,l)$-point correlation functions involving $k$ real numbers and $l$ strictly complex numbers,  provided we assume an additional level repulsion estimate on at least one of $f,\tilde f$. In practice, this estimate will be easy to verify for many random polynomials with real gaussian coefficients.   

We now give the formal statement of the replacement principle in the complex case.

\begin{theorem}[Replacement principle, complex case]\label{replace}  Let $C, r_0 \ge    1 \ge c_0 > 0$ be real constants and $k, a_0 \geq 1$ be integer constants, and set
\begin{equation}\label{adef}
 A := \frac{100k a_0}{c_0}.
\end{equation}
Let $n \geq 1$ be a natural number, and let $f=f_n, \tilde f = \tilde f_n$ be random polynomials of degree at most $n$ (not necessarily of the form in Definition \ref{Rand-def}) and  $z_1,\ldots,z_k$ be complex numbers that are allowed to depend on $n$.  Assume the following axioms.

\begin{itemize}

\item[(i)] (Non-degeneracy)  With probability at least $1-C n^{-A}$, $f$ is not identically zero, and similarly for $\tilde f$.

\item[(ii)] (Non-clustering property)  For  $r \geq 1$,  one has $N_{B(z_i,r)}(f) \leq C n^{1/A} r^2$ with probability at least $1- C n^{-A}$.   Similarly for $\tilde f$.

\item[(iii)] (Comparability of log-magnitudes) Given any $1 \leq k' \leq n^{c_0}$, any complex numbers $z'_1,\ldots,z'_{k'} \in \bigcup_{i=1}^k B(z_i,20r_0)$, and any smooth function $F: \C^{k'}\to \C$ obeying the derivative bounds
$$ |\nabla^a F(w)| \leq n^{c_0}$$
for all $0\leq a \leq a_0$ and $w \in \C^{k'}$, we have 
\begin{equation}\label{leah}
\Big|  \E \Big( F(\log |f(z'_1)|, \ldots, \log |f(z'_{k'})|) - F(\log |\tilde f(z'_1)|, \ldots, \log |\tilde f(z'_{k'})|)  \Big) \Big|  \le C n^{-c_0}
\end{equation}
with the convention that $F$ vanishes when one or more of its arguments are undefined.
\end{itemize}

Let $G: \C^k\to \C$ be a smooth function supported on the polydisc $B(0,r_0)^k$ that obeys the bounds
\begin{equation}\label{gsmooth}
|\nabla^a G(w)| \leq M
\end{equation}
for all $0 \leq a \leq a_0 + 2k + 1$, all $w \in \C^k$, and some $M>0$.  Then
\begin{align*}
&\Big| \int_{\C^k} G(w_1,\ldots,w_k) \rho^{(k)}_{f}(z_1 + w_1,\ldots,z_k + w_k)\ dw_1 \ldots dw_k \\
&\quad 
- \int_{\C^k} G(w_1,\ldots,w_k) \rho^{(k)}_{\tilde f}(z_1 + w_1,\ldots,z_k + w_k)\ dw_1 \ldots dw_k  \Big| \le \tilde C M n^{-c_0/4},
\end{align*}
where $\tilde C$ depends only on the quantities $C, r_0, c_0, k, a_0$.
\end{theorem}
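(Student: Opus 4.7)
The plan is to express the smoothed $k$-point correlation integral as the expectation of an explicit polynomial in the values of $\log|f|$ at a finite lattice of sample points, and then to invoke hypothesis (iii) directly. The starting point is the distributional Poisson--Jensen identity
\[
\mu_f := \sum_{i} \delta_{\zeta_i} = \frac{1}{2\pi}\Delta \log|f|,
\]
valid on the event $\{f \not\equiv 0\}$, which by (i) has probability $1 - O(n^{-A})$. Integration by parts then gives, for any smooth compactly supported $\varphi$, the identity $\int \varphi\, d\mu_f = \frac{1}{2\pi}\int \Delta\varphi(w)\, \log|f(w)|\, dw$, a bona fide Lebesgue integral because $\log|f|$ has only integrable logarithmic singularities at zeros of $f$.

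To handle the distinct-indices condition in the definition of $\rho^{(k)}$, I would apply M\"obius inversion over the lattice of set partitions of $\{1,\ldots,k\}$ to write
\[
\sum_{\substack{i_1, \ldots, i_k\\ \text{distinct}}} G(\zeta_{i_1} - z_1, \ldots, \zeta_{i_k} - z_k) = \sum_{\pi} c_\pi \int_{\C^{|\pi|}} G_\pi \prod_{B \in \pi} d\mu_f(w_B),
\]
where $G_\pi : \C^{|\pi|} \to \C$ is the restriction of $G$ to the diagonal identifying coordinates in each block of $\pi$, and $c_\pi$ are absolute combinatorial constants. Applying Poisson--Jensen in each of the $|\pi| \le k$ factors converts this into a finite (of size $O_k(1)$) linear combination of integrals of the shape
\[
\int_{\C^{|\pi|}} H_\pi(w_1,\ldots,w_{|\pi|}) \prod_{j=1}^{|\pi|} \log|f(w_j)|\, dw_1 \cdots dw_{|\pi|},
\]
where $H_\pi := (2\pi)^{-|\pi|}\Delta_{w_1}\cdots\Delta_{w_{|\pi|}} G_\pi$ is smooth, compactly supported in a translate of $B(0,r_0)^{|\pi|}$, and satisfies $\|\nabla^a H_\pi\|_\infty \ll_k M$ for all $a \leq a_0$, which is covered by \eqref{gsmooth} (consuming at most $2k$ extra derivatives of $G$).

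Next, I would discretize each such integral using a uniform lattice of spacing $h := n^{-c_0/2}$ in $\C$, so that the $|\pi|$-fold product lattice uses only $k' = O(h^{-2}) \leq n^{c_0}$ distinct sample points $w'_j \in \bigcup_{i} B(z_i,20 r_0)$. The resulting Riemann sum has the form $F_\pi\bigl(\log|f(w'_1)|,\ldots,\log|f(w'_{k'})|\bigr)$, with $F_\pi$ a polynomial of degree $|\pi| \leq k$ in its arguments. To satisfy the global derivative bound required by hypothesis (iii), I would compose $F_\pi$ with a smooth truncation of each argument to $[-A\log n, A\log n]$; the truncation error is controlled by (i) and standard tail bounds on $|f(w)|$, and the truncated $F_\pi$ then satisfies $|\nabla^a F_\pi| \ll_k M (\log n)^k \ll M n^{c_0}$ for $a \leq a_0$. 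Rescaling by $1/M$ to fit hypothesis (iii) gives $|\E F_\pi(\log|f|) - \E F_\pi(\log|\tilde f|)| \ll M n^{-c_0/2}$, and summing over the $O_k(1)$ partitions $\pi$ yields the desired comparison modulo the Riemann-sum discretization error.

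The main obstacle is bounding this discretization error, and this is precisely where hypotheses (i) and (ii) are essential. Since $\log|f|$ has logarithmic singularities at zeros of $f$, a naive pointwise Riemann-sum bound fails; however, on the high-probability event furnished by (ii), the total zero count in $\bigcup_i B(z_i,20r_0)$ is at most $O(n^{1/A} r_0^2)$ and every ball $B(w,h)$ contains $O(n^{1/A} h^2)=o(1)$ zeros, so singular lattice points are separated. Splitting each integral into a ``singular'' part near zeros of $f$ (volume $O(h^2)$ per zero, integrand $O(\log(1/h))$) and a ``smooth'' part (where the standard estimate $O(h \cdot \|\nabla H_\pi\|_\infty)$ applies), both pieces can be shown to be $O(M n^{-c_0/4})$ provided $A$ is taken large enough in terms of $k, a_0, c_0^{-1}$ — precisely the content of \eqref{adef}. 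Applying the identical estimate to $\tilde f$ and combining with the hypothesis-(iii) error produces the claimed bound $\tilde C M n^{-c_0/4}$, the delicate balance between $h$, $A$, and $c_0$ being the most intricate part of the argument.
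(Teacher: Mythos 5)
Your overall architecture --- Green's theorem to convert zero-counting statistics into integrals of $\log|f|$ against Laplacians of test functions, discretization into finitely many point evaluations of $\log|f|$, then axiom (iii) --- is the same as the paper's. But the deterministic Riemann-sum discretization is where the argument breaks, and it breaks on exactly the obstacle the paper's proof is built around. The quadrature error for $\int H_\pi \prod_j \log|f(w_j)|\,dw$ is governed by the gradient of the whole integrand, not just of $H_\pi$: writing $\log|f(w)| = a_n + \sum_i \log|w-\zeta_i|$, a zero at distance $d$ from the support contributes $O(1/d)$ to $\nabla \log|f|$ there, and axiom (ii) permits $O(n^{1/A} 4^l)$ zeroes at distance $\sim 2^l$ for every $1 \leq 2^l \leq \sqrt{n}$, so $\|\nabla \log|f|\|_\infty$ on the support can genuinely be of order $n^{1/2+1/A}$ (e.g.\ $n$ zeroes spread over $B(z_j,\sqrt{n})$ is consistent with (ii)). With $h = n^{-c_0/2}$ and $c_0 \leq 1$, the error $O(h \cdot \|\nabla(\mathrm{integrand})\|_\infty)$ is then $\gg 1$, and you cannot shrink $h$ without violating $k' \leq n^{c_0}$. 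The paper overcomes this with two devices you do not have: it subtracts from $\log|f|$ a \emph{random affine interpolant} $L_j$ (legitimate because $H_j$, being the Laplacian of a compactly supported function, integrates to zero against affine functions), which cancels the linear term in the Taylor expansion of each $\log|w-\zeta_i|$ and reduces the per-zero contribution to $O(1/d^2)$, summable to $O(n^{2/A})$ by the dyadic count above; and it replaces the lattice by Monte Carlo sampling at $m = n^{0.99 c_0}$ random points, so the error is $\|K_j\|_{L^2}/\sqrt{m\delta}$ rather than a pointwise gradient bound.

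Two further steps are not supported by the hypotheses. Axiom (ii) is stated only for $r \geq 1$, so it gives no separation of zeroes at scale $h \ll 1$ and does not prevent a deterministic lattice point from landing on, or arbitrarily near, a zero with non-negligible probability (which happens for discrete ensembles); in the paper it is the randomness of the sample points that controls this, via $\E_w |\log|w-\zeta||^2 = O(1)$ for $w$ uniform on a disk. And the truncation of each $\log|f(w')|$ to $[-A\log n, A\log n]$ cannot be "controlled by (i) and standard tail bounds on $|f(w)|$": here $f$ is an arbitrary random polynomial and no upper or lower tail bound on $|f(w)|$ appears among the hypotheses (the lower tail is an anti-concentration statement, which is precisely the hard input that is \emph{not} assumed). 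The paper never truncates the values of $\log|f|$ at all; it instead composes a bounded, compactly supported cutoff $P$ with the linear statistics $X_{z_j,G_j}$, whose size is controlled by the zero counts from axiom (ii), so only boundedness of the final test function $F$ and its derivatives is ever required by axiom (iii).
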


We will prove this theorem in Section \ref{tmt-sec}.

\begin{remark}  
In the applications in this paper, we will always take $a_0=3$, because our statistics will ultimately only depend on the first two moments of the atom distribution, and this can be exploited by Taylor expansions with a third order error.  In applications to random matrices such as \cite{tv-iid}, it is more convenient to take $a_0=5$, because random matrix statistics may be sensitive to the first four moments of the atom distribution (thanks to the \emph{Four Moment Theorem}), which require Taylor expansions with a fifth order error to exploit.
\end{remark} 

\begin{remark} One can view the above result as a local version of the replacement principle in \cite[Theorem 2.1]{TVcir}, which assumed a much weaker non-clustering bound (which, in the context of characteristic polynomials of random matrices, was formulated as a Frobenius norm bound on the relevant random matrices) and which assumed asymptotic comparability of $\frac{1}{n} \log |f(z)|$ and $\frac{1}{n} \log |\tilde f(z)|$ for each complex number $z$, rather than local comparability (the relationship between the two is roughly analogous to the relationship between the law of large numbers and the central limit theorem), but only gave conclusions about the global distribution of zeroes, rather than the local correlation functions.  Versions of this latter principle were used in the recent work of \cite{kab} on global universality for random polynomials.
\end{remark}

\begin{remark}
The theorem requires some smoothness bounds \eqref{gsmooth} on the test function $G$, but if one is willing to replace the quantitative bound
$\tilde C M n^{-c_0/4}$ in the conclusion of the theorem by weaker upper and lower bounds with error terms that go to zero as $n \to \infty$, one can extend the result to functions $G$ that are merely assumed to be continuous rather than smooth, by using tools such as the Stone-Weierstrass theorem to approximate continuous $G$ above and below by smooth $G$; we omit the details.
\end{remark}

\begin{remark}\label{land} Theorem \ref{tmt} is adapted to the situation in which the mean spacing between zeroes is expected to be comparable to $1$ (so that the correlation functions $\rho^{(k)}$ are also expected to have average magnitude comparable to $1$).  In practice, we may employ a rescaling in order to allow Theorem \ref{tmt} to meaningfully apply to settings in which the mean spacing is at some other scale (e.g. $1/\sqrt{n}$ or $1/n$).  One could also develop more general versions of Theorem \ref{tmt} in which the mean spacing near each reference point $z_j$ varies with $j$, but we will not detail such generalizations here in order to simplify the exposition.
\end{remark}

\section{Replacement principle: real case} 

Now we give the analogue of Theorem \ref{replace} in the case of polynomials with real coefficients, which is slightly more complicated and has slightly worse constants, but is otherwise very similar to the complex replacement principle.

\begin{theorem}[Replacement principle, real case]\label{replace-real}  Let $C, r_0 \ge    1 \ge c_0 > 0$ be real constants, and $a_0 \geq 1$ and $k,l \geq 0$ be integer constants with $k+l > 0$, and set
\begin{equation}\label{adef-real}
 A := \frac{200 (k+l)^2 (a_0+2)}{c_0}.
\end{equation}
Let $n \geq 1$ be a natural number, let $f=f_n, \tilde f=\tilde f_n$ be random polynomials of degree at most $n$ with real coefficients (not necessarily of the form in Definition \ref{Rand-def}) and let $x_1,\ldots,x_k \in \R$ and $z_1,\ldots,z_l \in \C$ be numbers that are allowed to depend on $n$.  Assume the following axioms.

\begin{itemize}

\item[(i)] (Non-degeneracy)  With probability at least $1-Cn^{-A}$, $f$ is not identically zero, and similarly for $\tilde f$.

\item[(ii)] (Non-clustering property)  For  $r \geq 1$, we have $N_{B(x_i,r)}(f), N_{B(z_j,r)}(f) \leq C n^{1/A} r^2$ for $1 \leq i \leq k, 1 \leq j \leq l$ with probability at least $1- Cn^{-A}$, and similarly for $\tilde f_n$.

\item[(iii)] (Comparability of log-magnitudes) Given any $1 \leq k' \leq n^{c_0}$, any complex numbers 
$$z'_1,\ldots,z'_{k'} \in \bigcup_{i=1}^k B(x_i,100r_0) \cup \bigcup_{j=1}^l B(z_j,100r_0),$$ 
and any smooth function $F: \C^{k'}\to \C$ obeying the derivative bounds
$$ |\nabla^a F(w)| \leq n^{c_0}$$
for all $0\leq a \leq a_0$, we have 
$$\Big|  \E \Big( F(\log |f(z'_1)|, \ldots, \log |f(z'_{k'})|) - F(\log |\tilde f(z'_1)|, \ldots, \log |\tilde f(z'_{k'})|)  \Big) \Big|  \le C n^{-c_0}.$$

\item[(iv)] (Weak level repulsion)  For  $x,y$ real, $z$ complex in the region
$$ \bigcup_{i=1}^k B(x_i,100r_0) \cup \bigcup_{j=1}^l B(z_j,100r_0)$$
with $|x-y|, |\Im(z)| \leq 1/C$, we have the pointwise bounds
\begin{equation}\label{rho-20}
\rho^{(2,0)}_{\tilde f}(x,y)  \le C
\end{equation}
and 
\begin{equation}\label{rho-01}
\rho^{(0,1)}_{\tilde f}(z)  \le C.
\end{equation}
\end{itemize}

Let $G: \R^k \times \C^l \to \C$ be a smooth function supported on $[-r_0,r_0]^k \times B(0,r_0)^l$ that obeys the bounds
\begin{equation}\label{gsmooth-2}
|\nabla^a G(w)| \leq M
\end{equation}
for all $0 \leq a \leq a_0 + 2(k+l) + 1$, all $w \in \R^k \times \C^l$, and some $M>0$.  Then
\begin{align*}
& \Big| \int_{\R^k} \int_{\C^l} G(y_1,\ldots,y_k,w_1,\ldots,w_l) \\
&\quad\quad \rho^{(k,l)}_{f}(x_1+y_1,\ldots,x_k+y_k,z_1 + w_1,\ldots,z_l + w_l)\ dw_1 \ldots dw_l dy_1 \ldots dy_l \\
&\quad - 
\int_{\R^k} \int_{\C^l} G(y_1,\ldots,y_k,w_1,\ldots,w_l) \\
&\quad\quad\quad \rho^{(k,l)}_{\tilde f}(x_1+y_1,\ldots,x_k+y_k,z_1 + w_1,\ldots,z_l + w_l)\ dw_1 \ldots dw_l dy_1 \ldots dy_l \Big|  \\
&\le \tilde C M n^{-\frac{c_0}{200(a_0+2)(k+l)}}
\end{align*}
where $\tilde C$ depends only on $C, r_0, c_0, k, l, r_0, a_0$.
\end{theorem}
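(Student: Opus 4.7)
The plan is to derive Theorem \ref{replace-real} from the complex replacement principle Theorem \ref{replace} by expressing the mixed real/complex correlation $\rho^{(k,l)}$ as an integral against a smoothed version of the purely complex correlation $\rho^{(k+l)}$, using the weak level repulsion hypothesis (iv) to control the smoothing error.

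Fix a small parameter $\eps = \eps(n) \in (0, r_0)$ to be optimized. Choose smooth bumps $\phi_\eps \colon \R \to [0,1]$ (with $\phi_\eps(0) = 1$, support in $[-\eps,\eps]$, and $|\phi_\eps^{(a)}| \lesssim \eps^{-a}$) and $\chi_+ \colon \C \to [0,1]$ (equal to one on $\{\Im z \geq \eps\}$, vanishing on $\{\Im z \leq -\eps\}$, with $|\nabla^a \chi_+| \lesssim \eps^{-a}$). Define a test function $\tilde G \colon \C^{k+l} \to \C$ by
$$\tilde G(u_1,\ldots,u_k,v_1,\ldots,v_l) := G(\Re u_1,\ldots,\Re u_k,v_1,\ldots,v_l)\,\prod_{i=1}^k \phi_\eps(\Im u_i)\,\prod_{j=1}^l \chi_+(z_j+v_j),$$
which is smooth with $|\nabla^a \tilde G| \lesssim M \eps^{-a}$ and supported in a polydisc of size $O(r_0)$.

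The heart of the argument is a decomposition
\begin{align*}
&\int_{\C^{k+l}} \tilde G(u-x,v-z)\, \rho^{(k+l)}_{f}(u,v)\, du\, dv \\
&\quad = \int_{\R^k}\!\int_{\C^l} G(y,w)\, \rho^{(k,l)}_{f}(x+y,z+w)\, dw\, dy + \mathrm{Err}(f,\eps),
\end{align*}
obtained by expanding the complex correlation via its defining sum over distinct index tuples and classifying each tuple by whether each zero of $f$ is real or genuinely complex (and in $\C_\pm$). The leading term corresponds to the "target" configurations (real zeros in the first $k$ slots, $\C_+$ zeros in the last $l$ slots); all other configurations are absorbed into $\mathrm{Err}(f,\eps)$. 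The error arises from three families: (a) complex zeros of $f$ with $|\Im|\leq\eps$ occupying a "real" slot, (b) pairs of real zeros within distance $\lesssim \eps$ of each other captured by overlapping smoothing windows, and (c) complex conjugate pairs $(w,\bar w)$ both close to the real axis. For $\tilde f$, each such family contributes $O(M\eps)$: the bound \eqref{rho-01} handles (a) and (c) by integrating $\rho^{(0,1)}_{\tilde f}$ over an $\eps$-strip around $\R$, and \eqref{rho-20} handles (b) by integrating $\rho^{(2,0)}_{\tilde f}$ over $\{|x-y|\leq\eps\}$; the non-clustering bound (ii) keeps the number of zeros in each window bounded with overwhelming probability, so at most $O(1)$-many tuples contribute per window. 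This gives $|\mathrm{Err}(\tilde f,\eps)|\lesssim M\eps$. The analogous bound $|\mathrm{Err}(f,\eps)|\lesssim M\eps$ is not assumed directly but can be bootstrapped: each error piece is itself the integral of a (lower-order) smoothed correlation, so applying the complex principle once at coarser precision transfers the level-repulsion control from $\tilde f$ to $f$.

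Then Theorem \ref{replace}, applied to $\tilde G$ with reference points $(x_1,\ldots,x_k,z_1,\ldots,z_l)$, gives
$$\left|\int \tilde G\, \rho^{(k+l)}_{f} - \int \tilde G\, \rho^{(k+l)}_{\tilde f}\right| \lesssim M\eps^{-(a_0+2(k+l)+1)}\, n^{-c_0/4},$$
absorbing the $\eps^{-a}$ bound on $\nabla^a \tilde G$ into the constant. Combining with the decomposition,
$$\left|\int G\,\rho^{(k,l)}_{f} - \int G\,\rho^{(k,l)}_{\tilde f}\right| \lesssim M\eps + M\eps^{-(a_0+2(k+l)+1)}\, n^{-c_0/4},$$
and optimizing with $\eps \sim n^{-c_0/(4(a_0+2(k+l)+2))}$ yields the claimed bound of the form $M n^{-c}$; the more conservative denominator $200(a_0+2)(k+l)$ in the theorem statement merely absorbs combinatorial constants from the decomposition step and from possibly translating the complex reference points near $\R$ via conjugation symmetry of $\rho^{(k,l)}$.

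The main technical obstacle is the decomposition step: the combinatorial bookkeeping expressing the smoothed complex $(k+l)$-correlation as the mixed $(k,l)$-correlation plus an $O(M\eps)$ error. Enumerating all ways that zeros of a real-coefficient polynomial (which come in conjugate pairs or are real) can be assigned to the slots of $\tilde G$, identifying the target configurations, and showing that both bounds of hypothesis (iv) (together with non-clustering) suffice to control every off-target family is where all the real-case-specific work concentrates. The subsequent application of Theorem \ref{replace} and the optimization in $\eps$ are then routine.
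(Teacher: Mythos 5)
Your proposal is correct and follows essentially the same route as the paper: the paper likewise thickens the real-slot test functions to complex ones supported in a strip of width $\gamma$ (and cuts the $\C_+$-slot functions off above $\Im z = \gamma/2$), uses the weak repulsion bounds \eqref{rho-20}, \eqref{rho-01} together with conjugate symmetry to show that with probability $1-O(\gamma^{1/4})$ no strictly complex zeroes lie in the strip and no two real zeroes are $\gamma$-close, transfers this repulsion from $\tilde f$ to $f$ by one application of the already-proved complex principle (your ``bootstrap''), and then applies Theorem \ref{replace} to the thickened statistics whose derivatives are of size $\gamma^{-a_0}$, optimizing $\gamma$ at the end. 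The only cosmetic difference is that the paper carries this out at the level of products of linear statistics (after the same Fourier/inclusion--exclusion reduction used in the complex case) rather than directly on the correlation functions.
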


We prove this theorem in Section \ref{real-sec}.

\begin{remark}
Notice that we require the weak repulsion \eqref{rho-20}, \eqref{rho-01} just for $\tilde f$ and not for $f$. In practice, we can  often choose $\tilde f$ to have gaussian coefficients and verify this axiom by a direct (although not entirely trivial) computation (using tools such as the Kac-Rice formula), while $f$ is permitted to have a discrete distribution (which would be very unlikely to obey \eqref{rho-20}, \eqref{rho-01} in a pointwise sense).  In our applications one can usually establish a stronger level repulsion bound than \eqref{rho-20}, \eqref{rho-01}, namely a bound which decays linearly in $|x-y|$ or $|\Im z|$, but we will not need this stronger bound here.  The reader may notice the difference in the exponent in the final bound, compared to the complex case ($\frac{c_0}{200(a_0+2)(k+l)}$ instead of $\frac{c_0}{4}$). This is due to the fact that we will need to apply the 
result in the complex case for a function $G$ with derivatives that can be polynomially large.   We make no attempt to 
optimize these constants whatsoever. 
\end{remark}

\section{Guaranteeing  the assumptions in the replacement principle}  \label{section:guarantee} 

We now present some tools to verify the various axioms in the replacement principle (Theorem \ref{replace} or Theorem \ref{replace-real}).  In order to use asymptotic notation such as $O()$ and $o()$, it will be convenient to phrase these tools in the asymptotic setting in which $n$ is going to infinity (rather than being large and fixed), although one could easily rewrite the propositions below in the non-asymptotic language of a single fixed $n$ if desired.

Let us say that an event depending on $n$ occurs \emph{with overwhelming probability} if it occurs with probability $1-O(n^{-A})$ for any fixed $A$ (independent of $n$), where the implied constant is allowed to depend on $A$.

We now give a general result (which implicitly appears in our previous paper \cite{tv-iid}; see also \cite{byy} for a closely related argument) that guarantees the non-degeneracy and non-clustering axioms (i), (ii) in Theorem \ref{replace} or Theorem \ref{replace-real} if one can obtain concentration result for the log-magnitude $\log |f|$.  

\begin{proposition}[Criterion for non-clustering]\label{critloc}
Let $n \geq 1$ be a natural number, and let $f=f_n$ be a random polynomial of degree at most $n$.  Let $z_0$ be a complex number depending on $n$, and let $0 < c \leq r$ be quantities that are permitted to depend on $n$, with the polynomial size bounds $r \ll n^{O(1)}$ and $c \gg n^{-O(1)}$.  Assume the following axiom:
\begin{itemize}
\item[(i)] (Concentration of logarithm)  For any $z \in B(z_0,r+c) \backslash B(z_0,r-c)$, one has
$$ \log |f(z)| = G(z) + O(n^{o(1)})$$
with overwhelming probability, where $G: \C \to \R$ is a (deterministic) smooth function (that can depend on $n$) obeying the polynomial size bound
\begin{equation}\label{go}
 \sup_{z \in B(z_0,r+c) \backslash B(z_0,r-c)} |G(z)| \ll n^{O(1)},
\end{equation}
and we adopt the convention $\log|0| = -\infty$.
\end{itemize}
Then one has with overwhelming probability that $f$ is non-vanishing and obeys the bound
\begin{equation}\label{nub}
 N_{B(z_0,r)}(f) =\frac{1}{2\pi} \int_{B(z_0,r)} \Delta G(z)\ dz + O( n^{o(1)} c^{-1} r ) + O\left( \int_{B(z_0,r+c) \backslash B(z_0,r-c)} |\Delta G(z)|\ dz \right).
 \end{equation}
Furthermore, the implied constants in the conclusions depend only on the implied constants in the hypotheses.
\end{proposition}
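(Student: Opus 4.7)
The plan is to leverage the distributional identity $\Delta\log|f|=2\pi\sum_i\delta_{\zeta_i}$ (the Riesz representation for the subharmonic function $\log|f|$) to convert the zero count $N_{B(z_0,r)}(f)$ into an integral of $\log|f|$ against a smooth cutoff, then use hypothesis (i) to replace $\log|f|$ by $G$ in that integral, so that the main term $\frac{1}{2\pi}\int_{B(z_0,r)}\Delta G\,dz$ appears naturally.

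More concretely, I would first choose smooth radial cutoffs $\phi_-\le 1_{B(z_0,r)}\le \phi_+$, both supported inside $B(z_0,r+c)$, with $\phi_-\equiv 1$ on $B(z_0,r-c)$ and $\phi_+\equiv 1$ on $B(z_0,r)$, enjoying $|\nabla^a\phi_\pm|\ll c^{-a}$ for $a\le 2$. Pairing the subharmonic function $\log|f|$ against $\phi_\pm$ via integration by parts produces
$$\sum_i\phi_\pm(\zeta_i)=\frac{1}{2\pi}\int \log|f(z)|\,\Delta\phi_\pm(z)\,dz,$$
and these sums sandwich $N_{B(z_0,r)}(f)$. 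Splitting $\log|f|=G+(\log|f|-G)$ and integrating by parts once more on the smooth piece gives
$$\frac{1}{2\pi}\int G\,\Delta\phi_\pm\,dz=\frac{1}{2\pi}\int \phi_\pm\,\Delta G\,dz=\frac{1}{2\pi}\int_{B(z_0,r)}\Delta G\,dz+O\Big(\int_{B(z_0,r+c)\setminus B(z_0,r-c)}|\Delta G|\,dz\Big),$$
which supplies both the main term and the $\int|\Delta G|$ correction in \eqref{nub}.

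The heart of the argument is then bounding the remaining error
$$E_\pm:=\Big|\frac{1}{2\pi}\int(\log|f(z)|-G(z))\,\Delta\phi_\pm(z)\,dz\Big|$$
by $O(n^{o(1)}c^{-1}r)$ with overwhelming probability. Since $|\Delta\phi_\pm|\ll c^{-2}$ is supported on an annulus of area $O(rc)$, a uniform bound $|\log|f|-G|\ll n^{o(1)}$ would do; the main obstacle is that hypothesis (i) is only pointwise, whereas $\log|f|$ has $-\infty$ singularities at the (random) zeros of $f$, so the sup-norm bound cannot hold naively. I plan to handle this by combining a polynomial $\eta$-net argument ($\eta=n^{-B}$ for large $B$, union bound over the $n^{O(1)}$ net points preserving overwhelming probability) with the Poisson--Jensen formula: for $z$ in the annulus and a suitable $\rho$, $\log|f(z)|$ equals the circle average of $\log|f|$ on $\partial B(z,\rho)$ minus the non-negative zero contribution $\sum_{\zeta_i\in B(z,\rho)}\log(\rho/|\zeta_i-z|)$. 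Averaging in $\rho$ across a small range (to avoid zeros on the boundary circle), the circle average is approximated by net-point values up to $O(n^{o(1)})$, while the zero contributions, summed against $|\Delta\phi_\pm|$ over the at most $n$ zeros, contribute at most $O(n^{O(1)}\eta^2\log(1/\eta)\,c^{-2})$ to $E_\pm$, which is absorbed into $n^{o(1)}c^{-1}r$ by choosing $B$ sufficiently large.

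Non-vanishing of $f$ comes for free: applying hypothesis (i) at any fixed $z$ in the annulus yields $\log|f(z)|=G(z)+O(n^{o(1)})$ finite with overwhelming probability, which rules out $f\equiv 0$. Sandwiching $N_{B(z_0,r)}(f)$ between the bounds obtained for $\phi_-$ and $\phi_+$ then yields \eqref{nub}, with all implicit constants inherited from those in the hypotheses.
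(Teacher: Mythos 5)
Your reduction of the zero count to the pairwise Green's-theorem identities, the sandwich between $\phi_\pm$, and the extraction of the main term $\frac{1}{2\pi}\int_{B(z_0,r)}\Delta G$ plus the $\int|\Delta G|$ correction all match the paper's argument exactly, as does the non-vanishing claim. The gap is in your treatment of $E_\pm$, specifically the sentence ``the circle average is approximated by net-point values up to $O(n^{o(1)})$.'' Hypothesis (i) is a ``for each fixed $z$, with overwhelming probability'' statement whose exceptional event depends on $z$; a union bound gives you control only at the $n^{O(1)}$ deterministic net points, and nothing forces $\log|f|$, or its circle averages, to behave between them. Concretely, at a net point $z'$ Jensen gives $C_\rho(z')=\log|f(z')|+J_\rho(z')$ with $J_\rho(z')=\sum_{\zeta_i\in B(z',\rho)}\log(\rho/|\zeta_i-z'|)\geq 0$, so the hypothesis yields only a \emph{lower} bound on the circle average: a cluster of zeroes at distance $e^{-n}$ from $z'$ (but not at $z'$) is consistent with $\log|f(z')|=G(z')+O(n^{o(1)})$ yet makes $C_\rho(z')$ deviate from $G(z')$ by $\Theta(n)$. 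More generally a degree-$n$ polynomial can change $\log|f|$ by $\Theta(n)$ over a distance $n^{-B}$, so no $O(n^{o(1)})$ interpolation between net points is available. A second, independent problem is that you must also compare $G(z)$ with $G$ at the nearest net point, and the hypotheses give only $\sup|G|\ll n^{O(1)}$ and qualitative smoothness --- no modulus of continuity --- so $|G(z)-G(z')|$ cannot be bounded by $O(n^{o(1)})$ either. Your Poisson--Jensen step does correctly neutralize the $-\infty$ spikes (the $\int J_\rho\,|\Delta\phi_\pm|\ll n\rho^2c^{-2}$ computation is fine), but it only repairs the downward singularities, not the upward fluctuations of the regular part.

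The paper closes exactly this gap by replacing the deterministic net with \emph{random} sampling (Lemma \ref{sampling}): one first proves a crude a priori bound $\int_{\mathrm{ann}}|\log|f|-G|^2\ll n^{O(1)}$ with overwhelming probability (using concentration at a single auxiliary point plus square-integrability of $\log|z|$), then draws $m=n^A$ i.i.d.\ uniform points in the annulus. Because these points are independent of $f$, hypothesis (i) applies to each of them directly --- no interpolation is needed --- and Chebyshev bounds the Monte Carlo error by $\|H\|_{L^2}/\sqrt{m\delta}\ll n^{O(1)-A/4}$, which is killed by taking $A$ large and diagonalizing. If you want to salvage your write-up, you should adopt this sampling mechanism (or supply some genuinely new input giving uniform two-sided control between net points, which the stated hypotheses do not provide).
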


This proposition can be viewed as a variant of the classical Jensen formula linking the zeroes of a holomorphic function to a certain integral of the log-magnitude of this function.  We will prove it in Section \ref{sample-sec}.  To use this proposition, we now specialize to the case of polynomials $f = f_\xi$ of the form of Definition \ref{Rand-def}.  We will normalize the atom distribution $\xi$ to have unit variance.  A short computation then reveals that for any complex number $z$, the random variable $f(z)$ has mean zero and variance
\begin{equation}\label{V-def}
 V(z) := \E (f(z) \bar f(z) )=  \sum_{i=0}^n |c_i|^2 |z|^{2i}.
\end{equation}
Note that this quantity is independent of the atom distribution $\xi$ (once it has been normalized as above).  It is then natural to expect the concentration result
$$ \log |f(z)| = \frac{1}{2} \log V(z) + O(n^{o(1)})$$
with overwhelming probability, which would give the hypothesis of Proposition \ref{critloc} with $G(z) := \frac{1}{2} \log V(z)$.  The following proposition makes this prediction rigorous, provided that the coefficients $c_i$ contain a sufficiently long and non-trivial lacunary subsequence:

\begin{lemma} \label{lemma:concentration-general} Let $n \geq 1$, and let $f = f_n$ be a random polynomial of the type in Definition \ref{Rand-def} whose atom distribution $\xi$ has mean zero and variance one; suppose further that we have the moment condition $\E |\xi|^{2+\eps} \leq M$ for some $\eps > 0$ and $M < \infty$. Let $z$ be a complex number (that can depend on $n$), and let $V(z)$ be defined by \eqref{V-def}.  Assume that there are indices $i_1, \dots, i_m \in \{0,\ldots,n\}$ for some $m = \omega(\log n)$ (thus $m \geq C(n) \log n$ for some $C(n)$ that goes to infinity as $n \to \infty$) such that we have the lacunarity property
$$ |c_{i_{j}}  z^{i_j}|  \ge  2 |c_{i_{j+1} } z^{i_{j+1} }| $$
for all $1 \leq j < m$, and the lower bound
$$
 |c_{i_m} z^{i_m}|  \ge   V(z)^{1/2} \exp (- n^{o(1)} ).
$$
 Then with overwhelming probability we have
$$\log |f(z)| = \frac{1}{2}  \log V(z) + O(n^{o(1)} ). $$
The implied constants in the conclusion depend on those in the hypotheses, and also on $\eps$ and $M$, but are otherwise uniform in $\xi$.
\end{lemma}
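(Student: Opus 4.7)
Plan.

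The upper bound $\log|f(z)| \le \tfrac12\log V(z) + O(n^{o(1)})$ is immediate from Markov's inequality applied to $|f(z)|^2$, which has expectation $V(z)$: for any fixed $B>0$, $\Pr(|f(z)|^2 > V(z) n^B) \le n^{-B}$, giving $\log|f(z)| \le \tfrac12\log V(z) + (B/2)\log n$ with probability at least $1-n^{-B}$. Since $B$ is arbitrary and $\log n = O(n^{o(1)})$, this delivers the upper bound with overwhelming probability.

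For the matching lower bound, set $a_j := c_{i_j} z^{i_j}$ and split $f(z) = L + R$, where $L := \sum_{j=1}^m a_j \xi_{i_j}$ is the lacunary subsum and $R$ is the remainder. Conditioning on all $\xi_i$ with $i \notin \{i_1,\ldots,i_m\}$ turns $R$ into a deterministic complex constant, so it suffices to establish, uniformly in $w \in \C$, the anti-concentration estimate
\begin{equation*}
\Pr\bigl(|L - w| \le \delta_0 |a_m|\bigr) \le p_0^m
\end{equation*}
for constants $\delta_0 > 0$ and $p_0 < 1$ depending only on $\eps$ and $M$. Since $m = \omega(\log n)$ the right-hand side is $n^{-\omega(1)}$, which is overwhelming; combined with $|a_m| \ge V(z)^{1/2}\exp(-n^{o(1)})$ this forces $|f(z)| \ge \delta_0 V(z)^{1/2}\exp(-n^{o(1)})$, giving the required lower bound $\log|f(z)| \ge \tfrac12\log V(z) - O(n^{o(1)})$.

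The anti-concentration estimate is the main obstacle. My approach proceeds in three steps. \emph{(a) Base small-ball:} from $\E\xi = 0$, $\E|\xi|^2 = 1$, and $\E|\xi|^{2+\eps} \le M$ derive a universal bound $\sup_w \Pr(|\xi - w| \le \delta_0) \le 1 - c_0$ with $\delta_0, c_0 > 0$ depending only on $\eps, M$, by combining $\E|\xi - w|^2 \ge 1$ with a H\"older estimate on the tail via the $(2+\eps)$-moment; as a corollary, $\Pr(|\xi - \xi'| \in [c_1, c_2]) \ge q > 0$ for some positive $c_1, c_2, q$, with $\xi'$ an independent copy of $\xi$. \emph{(b) Symmetrize:} apply $\sup_w \Pr(|L-w|\le r)^2 \le \Pr(|L - L'| \le 2r)$ with $L' := \sum_j a_j \xi_{i_j}'$; writing $\xi_{i_j} - \xi_{i_j}' = \tau_j \epsilon_j$ with $\tau_j \ge 0$ and $\epsilon_j$ a symmetric unit-modulus random variable, and conditioning on $\tau := (\tau_j)_j$, reduces the problem to a signed-sum anti-concentration bound on $\sum_j a_j \tau_j \epsilon_j$ with deterministic weights. \emph{(c) Lacunary signed-sum bound:} on the ``good'' event $G := \{|\{j : \tau_j \in [c_1, c_2]\}| \ge qm/2\}$, which has $\Pr(G) \ge 1 - e^{-\Omega(m)}$ by Chernoff applied to the iid Bernoullis $1_{\tau_j \in [c_1, c_2]}$, one extracts a subsequence of length $\ge m/K$ (with $K = O(\log(c_2/c_1))$) along which $|a_j \tau_j|$ is still lacunary with ratio $\ge 2$. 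The classical Erd\H{o}s--Littlewood--Offord estimate for signed sums with lacunary weights then yields $\sup_v\Pr(|\sum_j a_j \tau_j \epsilon_j - v| \le 2r \mid \tau) \le C\, 2^{-m/K}$ on $G$, while $G^c$ is absorbed by the crude bound $\Pr(\cdot \mid G^c) \le 1$ multiplied by $\Pr(G^c) \le e^{-\Omega(m)}$; adding these contributions gives the exponential decay $p_0^m$ and closes the argument.
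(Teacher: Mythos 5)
Your proposal is correct and follows essentially the same route as the paper: the upper bound via Chebyshev applied to $\E |f(z)|^2 = V(z)$, and the lower bound via a small-ball estimate for the lacunary subsum built from a Paley--Zygmund-type bound on $\P(|\xi-\xi'| \in [c_1,c_2])$ (the paper's Lemma \ref{lam}), symmetrization, a Chernoff bound isolating a positive fraction of ``good'' steps, sparsification of the lacunary sequence to absorb the ratio $c_2/c_1$, and a counting argument over sign patterns (the paper's Lemma \ref{laclac}). The only point to tighten is step (b): conditioning on $\tau_j = |\xi_{i_j}-\xi'_{i_j}|$ alone leaves $\epsilon_j$ a symmetric complex phase rather than a Rademacher sign, so you should additionally condition on the direction of $\xi_{i_j}-\xi'_{i_j}$ up to sign --- exactly what the paper's Bernoulli-selector construction of $\tilde\xi_i$ accomplishes --- after which the count of the $2^{|J|}$ admissible sign patterns goes through as you describe.
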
 

We establish this lemma in Section \ref{conc-sec}.

The following simple lemma is useful in proving the existence of  the subsequence $i_j$ in the above lemma.  

\begin{lemma} \label{lemma:length} 
Assume that $b_0 \ge b_1 \ge \dots \ge b_l >0$ and $ b_i /b_{i+1} \le C$ for some $C \geq 2$ then the sequence $b_i$ contains a subsequence $b_{i_1},\ldots,b_{i_m}$ of length $m \gg \log_C b_0/b_l$ that obeys the lacunarity property $b_{i_j} \geq 2 b_{i_{j+1}}$ for all $1 \leq j < m$. 
\end{lemma}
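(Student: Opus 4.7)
The plan is to extract the subsequence greedily from the left, at each stage jumping to the first index where the value has dropped by at least a factor of two, and then verify both the length lower bound and the termination condition using the assumption that consecutive ratios $b_i/b_{i+1}$ are bounded by $C$.

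Concretely, I would set $i_1 := 0$ and, given $i_j$, let $i_{j+1}$ be the smallest index in $\{i_j+1,\ldots,l\}$ for which $b_{i_{j+1}} \leq b_{i_j}/2$, continuing until no such index exists. Let $m$ denote the length of the sequence so produced. By construction the lacunarity property $b_{i_j} \ge 2 b_{i_{j+1}}$ holds for all $1 \leq j < m$, so the only remaining task is to bound $m$ from below.

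The key observation is a two-sided control on each step of the greedy process. On the one hand, $b_{i_j}/b_{i_{j+1}} \ge 2$ by construction. On the other hand, the minimality of $i_{j+1}$ forces $b_{i_{j+1}-1} > b_{i_j}/2$, and the hypothesis $b_{i_{j+1}-1}/b_{i_{j+1}} \le C$ then gives $b_{i_j}/b_{i_{j+1}} < 2C$. Iterating this over the $m-1$ steps yields $b_0/b_{i_m} < (2C)^{m-1}$. Meanwhile, since the process terminated, every index beyond $i_m$ (in particular $l$, if $i_m < l$) satisfies $b_l > b_{i_m}/2$; and if $i_m = l$ we of course have $b_{i_m} = b_l$. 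Either way $b_{i_m} < 2 b_l$, so $b_0/b_l < 2 (2C)^{m-1}$.

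Taking logarithms base $C$ and using $C \ge 2$ (so that $\log(2C) \asymp \log C$), one immediately concludes $m \gg \log_C(b_0/b_l)$, with an explicit implied constant such as $1/3$. There is no real obstacle here; the only subtlety is the boundary case when the greedy jump goes directly to the next index (so that the bound $b_{i_{j+1}-1} > b_{i_j}/2$ must be interpreted as $b_{i_j} > b_{i_j}/2$, which is trivially true), and the case when the process exhausts all indices, both of which are handled by the unified bound $b_{i_m} < 2 b_l$ above.
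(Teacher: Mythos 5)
Your proof is correct and is exactly the argument the paper has in mind: the paper's entire proof is ``this is immediate from the greedy algorithm,'' and you have simply written out that greedy construction in full, with the two-sided ratio bound $2 \le b_{i_j}/b_{i_{j+1}} < 2C$ and the termination bound $b_{i_m} < 2b_l$ handled correctly. Nothing further is needed.
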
 

\begin{proof}  This is immediate from the greedy algorithm.
\end{proof}

\begin{remark} Proposition \ref{critloc} combined with Lemma \ref{lemma:concentration-general} suggests that the first intensity $\rho^{(1)}(z)$ of a random polynomial $f$ of the form in Definition \ref{Rand-def} should be approximately equal to $\frac{1}{4\pi} \Delta \log V(z)$ in some weak sense.  In the case that the atom distribution $\xi$ was complex gaussian, this approximation was in fact shown to be exact in \cite{EK} (see also \cite{sodin}); this can also be derived from the Kac-Rice formula.  The results in \cite{kab} can be viewed as a verification of this approximation $\rho^{(1)}(z) \approx \frac{1}{4\pi} \Delta\log V(z)$ at global scales.
\end{remark}

\subsection{Comparability of log-magnitudes}

Next, we present a  two moment theorem for the log-magnitude, which assures assumption (iii) of the 
replacement principle. 

\begin{theorem}[Two moment theorem for log-magnitude]\label{tmp-general}  Let $\xi, \tilde \xi$ be two complex random variables of mean zero, variance one, which match moments to second order, and which obey the moment bound $\E |\xi|^{2+\eps}, \E |\tilde \xi|^{2+\eps} < M$ for some finite $\eps,M$.  Let $n \geq 1$, and suppose that $f_{n,\xi}, f_{n,\tilde \xi}$ are random polynomials of the form in Definition \ref{Rand-def} with atom distributions $\xi,\tilde \xi$ respectively, and some choices of coefficients $c_0,\ldots,c_n$. Let $k \geq 1$ be a natural number with $k \leq n^{\alpha_0}$ for some $\alpha_0>0$, and let $z_1,\ldots,z_k$ be complex numbers obeying the delocalization bounds
\begin{equation} \label{maxbound} | c_i z_j^i  | \le n^{-\alpha_1} V(z_j)^{1/2}. \end{equation} 
for $0 \leq i \leq n$ and $1 \leq j \leq k$, where $V$ is defined in \eqref{V-def}.  
Let $G: \C^k \to \C$ be a smooth function (possibly depending on $n$) obeying the derivative bounds
$$ |\nabla^a G(w)| \leq n^{ \alpha_0}$$
for all $0\leq a \leq 3$.  Then if $\alpha_0$ is sufficiently small depending only on $\alpha_1,\eps$, we have
\begin{equation}\label{geo}
 \E G(\log |f_{n,\xi}(z_1)|, \ldots, \log |f_{n,\xi}(z_k)|) - G(\log |f_{n,\tilde \xi}(z_1)|, \ldots, \log |f_{n,\tilde \xi}(z_k)|) = O(n^{-\alpha_0})
\end{equation}
where the implied constant depends only on $\alpha_0,\alpha_1,\eps,M$.
\end{theorem}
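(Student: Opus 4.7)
I would prove this by a Lindeberg-style replacement argument, swapping the atoms $\xi_i$ for $\tilde\xi_i$ one index at a time. Let
\[
f_{(\ell)}(z) := \sum_{i<\ell} c_i \tilde\xi_i z^i + \sum_{i\geq\ell} c_i \xi_i z^i,
\]
so that $f_{(0)}=f_{n,\xi}$ and $f_{(n+1)}=f_{n,\tilde\xi}$; a telescoping of \eqref{geo} reduces the claim to
\[
\sum_{i=0}^n |\Delta_i| = O(n^{-\alpha_0}), \qquad
\Delta_i := \E G\bigl((\log|f_{(i)}(z_j)|)_j\bigr) - \E G\bigl((\log|f_{(i+1)}(z_j)|)_j\bigr).
\]
For a fixed $i$ I would condition on all other coefficients and write $f_{(i)}(z_j) = S_j^{(i)} + c_i z_j^i \xi_i$ and $f_{(i+1)}(z_j) = S_j^{(i)} + c_i z_j^i \tilde\xi_i$, with $S_j^{(i)}$ independent of both $\xi_i$ and $\tilde\xi_i$, so that the swap is a rank-one perturbation.

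On the good event where $|S_j^{(i)}| \asymp V(z_j)^{1/2}$ for each $j$ and $|\xi_i|, |\tilde\xi_i| \leq T := n^{\beta}$ (with $\beta<\alpha_1$ to be chosen), the ratios $u_j := c_i z_j^i \xi_i/S_j^{(i)}$ satisfy $|u_j| \leq n^{-\alpha_1+\beta} \ll 1$, and I would use the convergent expansion
\[
\log|1+u_j| = \Re u_j - \tfrac12 \Re(u_j^2) + O(|u_j|^3)
\]
together with a Taylor expansion of $G$ to second order about the base point $(\log|S_j^{(i)}|)_j$. This presents $G((\log|f_{(i)}(z_j)|)_j) - G((\log|f_{(i+1)}(z_j)|)_j)$ as a polynomial of total degree at most $2$ in $\Re\xi_i, \Im\xi_i$ (respectively $\Re\tilde\xi_i, \Im\tilde\xi_i$), plus a cubic remainder in $u_j$. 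Since $\xi$ and $\tilde\xi$ match moments to order $2$, the polynomial terms have identical conditional expectations and cancel, so only the cubic remainder contributes to $\Delta_i$.

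The summation over $i$ is controlled via Parseval's identity $\sum_i |c_i z_j^i|^2 = V(z_j)$ combined with the delocalization hypothesis \eqref{maxbound}:
\[
\sum_{i=0}^n |c_i z_j^i|^3 \leq \Bigl(\max_i |c_i z_j^i|\Bigr) \sum_i |c_i z_j^i|^2 \leq n^{-\alpha_1} V(z_j)^{3/2}.
\]
Together with $|\nabla^a G| \leq n^{\alpha_0}$ for $a \leq 3$ and the truncated moment bound $\E |\xi|^3 \mathbf{1}_{|\xi|\leq T} \leq M T^{1-\eps}$ (H\"older from $\E |\xi|^{2+\eps} \leq M$), the cubic contribution summed over $i$ is $O(n^{\alpha_0 - \alpha_1} T^{1-\eps})$. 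The complementary tail event $\{|\xi_i|>T\}$ contributes at most $O(n^{1+\alpha_0} T^{-2-\eps})$ via the crude bound $|G|_\infty \leq n^{\alpha_0}$ and Markov. Choosing $T = n^{(1+\alpha_1)/3}$ balances the two terms to $O(n^{\alpha_0 + (1-\eps-\alpha_1(2+\eps))/3})$, which is $O(n^{-\alpha_0})$ once $\alpha_0$ is small enough relative to $\alpha_1, \eps$.

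The hardest part is establishing the good event $|S_j^{(i)}| \asymp V(z_j)^{1/2}$ with \emph{overwhelming} probability, since a merely polynomial failure probability would be amplified by $|G|_\infty \leq n^{\alpha_0}$ at each of the $n$ swaps and need not telescope to $o(n^{-\alpha_0})$. The intended tool is Proposition~\ref{critloc} combined with Lemma~\ref{lemma:concentration-general} applied to $S_j^{(i)}$, whose variance differs from $V(z_j)$ by only a factor $1+O(n^{-2\alpha_1})$; however the delocalization \eqref{maxbound} does not immediately furnish a lacunary subsequence of magnitudes $|c_{i'} z_j^{i'}|$ of length $\omega(\log n)$ ending near $V(z_j)^{1/2}$. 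My plan here is to sort these magnitudes and extract such a subsequence greedily via Lemma~\ref{lemma:length}, if necessary strengthening Lemma~\ref{lemma:concentration-general} to handle the delocalized regime, or replacing it by a direct anti-concentration estimate (for instance negative-moment control for $|S_j^{(i)}|$ via a Berry--Esseen comparison with a complex Gaussian of variance $V(z_j)$, exploiting the same $\ell^3$ bound as above). The singular behavior of $\log$ at zero and the convention that $G$ vanishes when any argument is undefined are then absorbed into the overwhelming-probability non-vanishing of $f$ supplied by that same concentration.
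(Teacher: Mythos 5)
Your Lindeberg swap is performed \emph{after} composing with the logarithm, so it requires the lower bound $|S_j^{(i)}| \gg V(z_j)^{1/2}$ to hold with overwhelming probability at every one of the $n$ swaps; you correctly flag this as the hardest step, but under the stated hypotheses it is false, and neither of your proposed repairs can deliver it. Take the Kac polynomial with Bernoulli atoms at $z_j=1$ (and $n$ odd): then $|c_i z_j^i|=1$ and $V(z_j)=n+1$, so \eqref{maxbound} holds for any $\alpha_1<1/2$, yet $\P(f(1)=0)=\Theta(n^{-1/2})$, so your good event fails with only polynomially small probability. Your accounting charges each swap with $\|G\|_\infty\,\P(\mathrm{bad})\sim n^{\alpha_0}\cdot n^{-1/2}$ on the bad event, so the sum over the $n$ swaps is $\gg n^{1/2}$ and the argument cannot close. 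A lacunary subsequence of length $\omega(\log n)$ need not exist (all the $|c_i z_j^i|$ may be equal, in which case Lemma \ref{lemma:length} extracts a subsequence of length one), and a Berry--Esseen comparison only yields a polynomial, never superpolynomial, error in the small-ball estimate --- the Bernoulli example shows this is unavoidable. The paper's proof sidesteps the issue entirely: it runs the swap on the \emph{un-logarithmed} normalized values $V(z_j)^{-1/2}f(z_j)$ against a globally smooth test function $H$ (Proposition \ref{nolog}), where no lower bound on $|f(z_j)|$ is needed, and then reinstates the logarithm by splitting $G=G_1+G_2$, with $G_1$ supported where some argument is $\le -50\alpha_0\log n$. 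The contribution of $G_1$ is bounded by first transferring to the Gaussian case via Proposition \ref{nolog} and then using the explicit Gaussian small-ball probability $O(kn^{-50\alpha_0})$, while $G_2\circ\log$ has derivatives $O(n^{O(\alpha_0)})$ away from the singularity and falls under Proposition \ref{nolog} directly. This cutoff-plus-Gaussian-comparison step is the idea your proposal is missing, and it cannot be replaced by a concentration estimate for $\log|f|$.

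A secondary, quantitative problem is your parameter balance. By your own computation the truncation at $T$ leaves an error $O(n^{\alpha_0+(1-\eps-\alpha_1(2+\eps))/3})$, whose exponent is negative only when $\alpha_1>(1-\eps)/(2+\eps)\approx 1/2$; but the theorem must hold for arbitrarily small $\alpha_1>0$ (indeed the applications use $\alpha_1=\eps/2$). The paper avoids truncation altogether by observing that the Taylor remainder is simultaneously $O(a_{i}^2|\xi_i|^2)$ and $O(a_i^3|\xi_i|^3)$ and interpolating to $O(a_i^{2+\eps}|\xi_i|^{2+\eps})$, after which the $2+\eps$ moment bound and $\sum_i a_i^{2+\eps}\le(\sup_i a_i)^\eps\sum_i a_i^2$ close the estimate for every $\alpha_1>0$ once $\alpha_0$ is small enough. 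You would need to adopt this interpolation (or an equivalent device) even after fixing the main gap.
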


We prove this theorem in Section \ref{log-sec}. The $2+\eps$ moment bound is needed to obtain a polynomial decay rate $O(n^{-\alpha_0})$ in \eqref{geo}, which in turn is needed in our version of the replacement principle.  It may however be possible through a more careful analysis to obtain local universality results for polynomials that do not obey this bound, at the cost of replacing $O(n^{-\alpha_0})$ type error terms in the final universality bounds with qualitative decay terms $o(1)$.  We will not pursue this issue here.  Note that if $\xi,\tilde \xi$ are both real valued, then the hypothesis of matching moments to second order is automatic since the $\xi,\tilde \xi$ are normalized to have mean zero and variance one. The leaing idea is to  use Lindeberg replacement trick, originated in \cite{Lin} (see also \cite{PR, SC} for more  recent 
discussions). The arguments  we will use  follow the spirit of \cite{TVlocal, tv-iid}, where characteristic polynomials of 
random matrices were considered. 

\subsection{A sufficient condition for the repulsion bounds}  \label{section:repulsion bound} 

Finally, we give a lemma for verifying the repulsion axiom (iv) of the real replacement principle in the case when the atom distribution is gaussian.  We use the usual exterior product $\wedge: \C^{n+1} \times \C^{n+1} \to \bigwedge^2 \C^{n+1}$ on the vector space $\C^{n+1}$, in particular
$$ |v \wedge w| = (\sum_{0 \leq i < j \leq n} |v_i w_j - v_j w_i|^2)^{1/2}$$
for any $v = (v_0,\ldots,v_n)$ and $w = (w_0,\ldots,w_n)$ in $\C^{n+1}$.

\begin{lemma}[Repulsion of zeroes]\label{lemma:RB1}  Let $n \geq 1$, and let $f = f_n$ be a random polynomial of the type in Definition \ref{Rand-def}, with real coefficients $c_0,\ldots,c_n$ and with atom distribution $\xi$ given by the real gaussian $N(0,1)_\R$.  
Let $x$ be a real number, let $C>1$ and $r_0>0$.
Let $R: B(x,r_0) \to \C$ be a  holomorphic function that is nonvanishing in $B(x,r_0)$, and let $v: \C \to \C^{n+1}$ be the vector valued holomorphic function
$$ v(z) := (R(z) c_i z^i)_{i=0}^n.$$
Assume the axiom
\begin{equation}\label{vo}
|v(z)| \leq C
\end{equation}
for all $z \in B(x,r_0)$, as well as the axiom
\begin{equation}\label{vos}
\left|v(x) \wedge \frac{d}{dx} v(x)\right| \geq C^{-1}.
\end{equation}
Then, if $\delta$ is sufficiently small depending on $r_0$ and $C$, one has the real repulsion estimate
\begin{equation}\label{rhofxy}
\rho^{(2,0)}_f(x,x+\delta) =  O(\delta).
\end{equation}
and the complex repulsion estimate
\begin{equation}\label{rhofz}
\rho^{(0,1)}(x+\sqrt{-1}\delta) =O(\delta).
\end{equation}
Here the implied constants are allowed to depend on $C$ and $r_0$.
\end{lemma}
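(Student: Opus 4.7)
The plan is to apply the Kac--Rice formula to $f$ as a real Gaussian process, compute the intensity in terms of the ``bare'' vector $u(z) := (c_i z^i)_{i=0}^n$, and then use the relation $v = Ru$ to convert the final expression into the $v$-language of the hypotheses; a scaling identity will cause the auxiliary function $R$ to drop out entirely.

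For the real repulsion estimate, Kac--Rice gives
$$\rho^{(2,0)}_f(x,x+\delta) = \E\big[\,|f'(x)\,f'(x+\delta)|\,\big|\,f(x)=f(x+\delta)=0\big]\cdot\phi_{f(x),f(x+\delta)}(0,0),$$
where the joint density has covariance matrix with determinant $|u(x)\wedge u(x+\delta)|^2 = \delta^2|u(x)\wedge u'(x)|^2+O(\delta^3)$ by Taylor expansion. Hence $\phi = O(1/(\delta|u(x)\wedge u'(x)|))$. For the conditional factor, the constraints $f(x)=f(x+\delta)=0$ combined with Taylor give $f'(x)\approx -\tfrac{\delta}{2}f''(x)$ and $f'(x+\delta)\approx +\tfrac{\delta}{2}f''(x)$; moreover the conditioning is asymptotically equivalent to $\xi\perp\Span(u(x),u'(x))$, so if $u''_\perp$ denotes the orthogonal projection of $u''(x)$ onto the complement of this span, the conditional variance of $f''(x)$ is $|u''_\perp|^2 + O(\delta)$. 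Multiplying out,
$$\rho^{(2,0)}_f(x,x+\delta) = \frac{\delta}{8\pi}\cdot\frac{|u''_\perp|^2}{|u(x)\wedge u'(x)|} + O(\delta^2).$$

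The key point is that this quantity is invariant under rescaling $u$ by a nonvanishing scalar holomorphic function. Using the Gram identity $|u''_\perp|^2 = |u\wedge u'\wedge u''|^2/|u\wedge u'|^2$, the ratio becomes $|u\wedge u'\wedge u''|^2/|u\wedge u'|^3$. A direct Leibniz computation with $v=Ru$ (in which all cross terms contain repeated wedge factors and hence vanish) shows $v\wedge v' = R^2\,u\wedge u'$ and $v\wedge v'\wedge v'' = R^3\,u\wedge u'\wedge u''$, so the scalar $R$-factors cancel, giving
$$\frac{|u''_\perp|^2}{|u(x)\wedge u'(x)|} = \frac{|v(x)\wedge v'(x)\wedge v''(x)|^2}{|v(x)\wedge v'(x)|^3}.$$
This expression depends only on $v$. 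Now the hypotheses control it purely in terms of $C,r_0$: the denominator is bounded below by $C^{-3}$ via $|v(x)\wedge v'(x)|\ge C^{-1}$, while $|v(z)|\le C$ on $B(x,r_0)$ together with Cauchy's estimates on the holomorphic $v$ bound $|v'(x)|$ and $|v''(x)|$ in terms of $C$ and $r_0$; hence $|v\wedge v'\wedge v''|\le|v||v'||v''|$ is similarly bounded. This yields $\rho^{(2,0)}_f(x,x+\delta) = O(\delta)$, with implied constants depending only on $C$ and $r_0$.

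The complex repulsion estimate is handled identically: the 2--dimensional Kac--Rice formula applied to the map $z\mapsto(\Re f(z),\Im f(z))$ has Jacobian determinant $|f'(z)|^2$ by the Cauchy--Riemann equations, and Taylor-expanding the $2\times 2$ covariance of $(\Re f(x+\sqrt{-1}\delta),\Im f(x+\sqrt{-1}\delta))$ in $\delta$ around $\delta=0$ produces the same $1/(\delta|u\wedge u'|)$-type bound for the density; the conditioning $f(x+\sqrt{-1}\delta)=0$ forces $f(x)=O(\delta^2)$ and $f'(x)=O(\delta^2)$, so $f'(x+\sqrt{-1}\delta)\approx \sqrt{-1}\delta f''(x)$, giving a $\delta^2|u''_\perp|^2$-type conditional expectation and the identical final ratio. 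The principal obstacle throughout is the $R$-dependence at every intermediate step; it is entirely resolved by the scaling identity above, which is precisely the reason the lemma's hypotheses are formulated using the auxiliary vector $v$ rather than $u$ directly.
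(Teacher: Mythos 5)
Your overall strategy -- Kac--Rice, the observation that the relevant combination of Gram-determinant quantities is invariant under $u \mapsto Ru$ (via $v\wedge v' = R^2\, u\wedge u'$ and $v\wedge v'\wedge v'' = R^3\, u\wedge u'\wedge u''$), and then Cauchy estimates on the holomorphic, uniformly bounded $v$ -- is exactly the mechanism the paper uses, and your final bound $|v\wedge v'\wedge v''|^2/|v\wedge v'|^3 = O_{C,r_0}(1)$ is sound. However, there is a genuine uniformity gap in the order in which you perform the two key steps. You Taylor-expand in the \emph{bare} vector $u(z)=(c_iz^i)_{i=0}^n$ first, obtaining statements like $|u(x)\wedge u(x+\delta)|^2 = \delta^2|u(x)\wedge u'(x)|^2 + O(\delta^3)$ and $\rho^{(2,0)} = \frac{\delta}{8\pi}\frac{|u''_\perp|^2}{|u\wedge u'|} + O(\delta^2)$, and only afterwards convert the \emph{leading term} to $v$-language. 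But the hypotheses control only $v=Ru$, not $u$: the quantities $|u|, |u'|, |u''|, |u'''|$ entering your error terms can be arbitrarily large (or the expansion can degenerate) depending on $n$ and the $c_i$, since $R$ is an arbitrary nonvanishing holomorphic normalizer. Consequently your $O(\delta^2)$ and $O(\delta^3)$ remainders have implied constants depending on the polynomial, not merely on $C$ and $r_0$, and the lemma's conclusion -- $O(\delta)$ with constants depending only on $C,r_0$, for $\delta$ small depending only on $C,r_0$ -- does not follow from an asymptotic whose remainder is uncontrolled. The scaling identity rescues only the main term, not the errors.

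The fix, which is how the paper proceeds, is to reverse the order: first rewrite the \emph{exact} Kac--Rice expression in terms of $v$ alone. Using Cauchy--Schwarz on the conditional expectation and the Gaussian identities, one bounds $\rho^{(2,0)}(x,x+\delta)$ by
$|v(x)\wedge v(x+\delta)|^{-1}\operatorname{dist}(v'(x),\operatorname{span}(v(x),v(x+\delta)))\operatorname{dist}(v'(x+\delta),\operatorname{span}(v(x),v(x+\delta)))$,
where the $R$-factors cancel identically (not just to leading order). Only then does one Taylor-expand, now in $v$, whose derivatives on $B(x,r_0/2)$ are uniformly bounded by $O_{C,r_0}(1)$ thanks to \eqref{vo} and the Cauchy integral formula; this yields $|v(x)\wedge v(x+\delta)|\gg\delta$ from \eqref{vos} and each distance $\ll\delta$, with all constants depending only on $C,r_0$. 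A secondary point: for an upper bound you do not need the exact joint conditional law of $(f'(x),f'(x+\delta))$ or the precise constant $\frac{1}{8\pi}$; Cauchy--Schwarz plus the conditional second-moment identity $\E(|X\cdot w|^2\mid X\perp v_1,v_2)=\operatorname{dist}(w,\operatorname{span}(v_1,v_2))^2$ suffices and avoids the heuristic steps ("the conditioning is asymptotically equivalent to $\xi\perp\operatorname{Span}(u,u')$") that would otherwise also need uniform justification. The same restructuring is needed for the complex case \eqref{rhofz}.
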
 

We prove this lemma in Section \ref{repulse-sec}.  Our main tools will be the Kac-Rice formula from \cite{kac-0}, \cite{kac}, \cite{rice}, the Cauchy integral formula, and certain geometric arguments.  The holomorphic factor $R(z)$ should be viewed as a normalization factor that one is free to choose in order to make the two hypotheses \eqref{vo}, \eqref{vos} of the lemma hold simultaneously.

\section{Universality of the correlation functions of the classical ensembles} 

We now specialize the above results to the classical ensembles mentioned in the introduction, namely the flat, elliptic, hyperbolic, and Kac polynomials.

\subsection{Flat polynomials}

We begin with the case of flat polynomials (or Weyl polynomials), i.e. random polynomials of the form in Definition \ref{Rand-def} with $c_i := \frac{1}{\sqrt {i!} } $.   Under extremely mild assumptions on the atom distribution\footnote{In \cite{kab}, $\xi$ does not even need to have finite mean or variance; the hypothesis $\E \log(1+|\xi|) < \infty$ suffices.  It is unlikely however that weak hypotheses continue to suffice for local universality.  For instance, in \cite{LS1} it was shown that the number of real zeroes of a Kac polynomial changes significantly when one takes $\xi$ to be drawn from the Cauchy distribution rather than from a distribution of finite variance; see also \cite{IZ} for some stronger and more general results in this direction.} $\xi$, it was shown in \cite[Theorem 2.3]{kab} that the zeroes of such polynomials obeyed the circular law, thus for instance for any Jordan-measurable subset $\Omega$ of the complex plane (e.g. a ball or a rectangle), one has
$$ \frac{1}{n} N_\Omega \to \int_\Omega \frac{1}{\pi} 1_{B(0,1)}(z/\sqrt{n})\ dz$$
both in probability and in the almost sure sense as $n \to \infty$ (assuming the atom distribution $\xi$ is independent of $n$).  Thus, in particular, the bulk of the zeroes should lie inside the disk $B(0,\sqrt{n})$ and be uniformly distributed within that disk at global scales (i.e. at scales comparable to $\sqrt{n}$).  As such, we expect the mean eigenvalue spacing to be comparable to $1$.

We now can present our main universality results for flat polynomials at local scales.

\begin{theorem}[Two moment theorem for flat polynomials; complex case]\label{tmt}  Let $k \geq 1$, $\eps > 0$, and $C>0$ be constants.  Let $n$ be a natural number, Let $f_{n,\xi}, f_{n,\tilde \xi}$ be flat polynomials with atom distributions $\xi, \tilde \xi$ being complex random variables of mean zero and variance one, matching moments to second order and also obeying the bounds $\E |\xi|^{2+\eps}, \E |\tilde \xi|^{2+\eps} \leq C$.
Let  $z_1,\ldots,z_k \in \C$ be quantities depending on $n$ with $n^\eps \leq |z_i| \leq \sqrt{n} + C$ for all $i=1,\ldots,k$. 

Let $G: \C^k\to \C$ be a smooth function supported on the polydisc $B(0,C)^k$ that obeys the bounds
$$
|\nabla^a G(w)| \leq C
$$
for all $0 \leq a \leq 5k + 1$ and all $w \in \C^k$.  Then
\begin{align*}
&\Big| \int_{\C^k} G(w_1,\ldots,w_k) \rho^{(k)}_{f_{n,\xi}}(z_1 + w_1,\ldots,z_k + w_k)\ dw_1 \ldots dw_k \\
&\quad 
- \int_{\C^k} G(w_1,\ldots,w_k) \rho^{(k)}_{f_{n,\tilde \xi}}(z_1 + w_1,\ldots,z_k + w_k)\ dw_1 \ldots dw_k  \Big| \le \tilde C n^{-c_0}
\end{align*}
for some $\tilde C$ depending only on $k,\eps,C$, and some $c_0>0$ depending only on $\eps$.
\end{theorem}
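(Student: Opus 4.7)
The plan is to apply the complex replacement principle (Theorem~\ref{replace}) to the pair $(f_{n,\xi}, f_{n,\tilde\xi})$, with parameters $r_0 := C$, $a_0 := 3$, the given $k$, and $c_0$ a small positive constant depending only on $\eps$ (to be chosen at the end). The target estimate is then precisely the conclusion of that theorem (with $c_0/4$ absorbed into a new $c_0$), so the task reduces to verifying its three axioms, which we do using the machinery of Section~\ref{section:guarantee}.

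Axiom (i), non-degeneracy, is immediate: since $c_i = 1/\sqrt{i!} \neq 0$ for all $i$, the event $\{f \equiv 0\}$ is contained in $\{\xi_0 = \cdots = \xi_n = 0\}$. The normalisation $\E\xi = 0$, $\E|\xi|^2 = 1$ together with $\E|\xi|^{2+\eps} \le C$ force $\Pr(\xi = 0) \le 1 - c$ for some $c > 0$ depending only on $\eps$ and $C$ (by Paley--Zygmund applied to $|\xi|^2$), so $\Pr(f \equiv 0) \le (1-c)^{n+1}$, which is much smaller than any $n^{-A}$; the same bound applies to $\tilde f$.

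Axiom (iii), comparability of log-magnitudes, follows from Theorem~\ref{tmp-general} once the delocalisation hypothesis \eqref{maxbound} is checked. For $z \in \bigcup_{j=1}^k B(z_j, 20 r_0)$ we still have $|z| \ge n^\eps - O(1) \gg n^{\eps/2}$, and
$$ \frac{|c_i z^i|^2}{V(z)} \;\le\; \frac{|z|^{2i}/i!}{\sum_{j=0}^n |z|^{2j}/j!} \;\le\; \sup_{j\ge 0} \Pr\bigl(\mathrm{Poisson}(|z|^2) = j\bigr) \;=\; O(1/|z|) \;=\; O(n^{-\eps/2})$$
by Stirling, giving the delocalisation bound with $\alpha_1 = \eps/4$. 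Theorem~\ref{tmp-general}, applied with $\alpha_0$ a small multiple of $\eps$, then yields \eqref{leah}.

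Axiom (ii), non-clustering, is the main step. Applying Proposition~\ref{critloc} to each reference point with $G(z) := \tfrac12 \log V(z)$ and $c := 1$, followed by a dyadic union bound over $r \in [1, n]$ (the trivial bound $N \le n$ handling larger $r$), reduces the task to (a) the pointwise concentration $\log|f(z)| = \tfrac12 \log V(z) + O(n^{o(1)})$ with overwhelming probability for each $z$ near the $z_j$, and (b) the elementary estimate $\tfrac{1}{2\pi}\int_{B(z_0,r)} \Delta(\tfrac12\log V) = O(r^2)$, which follows since $\log V(z)$ is close to $|z|^2$ in the bulk and thus has Laplacian $\asymp 4$ there. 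For (a) we invoke Lemma~\ref{lemma:concentration-general}, whose input is a lacunary subsequence of $(|c_i z^i|)_{i=0}^n$ of length $m = \omega(\log n)$ whose final term stays within $\exp(n^{o(1)})$ of $V(z)^{1/2}$. The sequence $i\mapsto |z|^{2i}/i!$ is unimodal with peak near $i^\star := \lfloor |z|^2 \rfloor$, and a Stirling calculation shows that spacing indices $\lceil |z|\rceil$ apart on either side of the peak produces ratios $\ge 2$ between consecutive terms; taking $m := \lceil \log^2 n\rceil$ such indices gives a subsequence whose last term lies within a factor $\exp(O(\log^4 n)) = \exp(n^{o(1)})$ of the peak value $\asymp V(z)^{1/2}/|z|^{1/2}$. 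One uses the right side of the peak when $|z|^2 \le n/2$, and the left side when $|z|^2 > n/2$, so that the chosen indices remain inside $\{0,\ldots,n\}$.

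The main obstacle is the latter construction in the regime $|z|\sim \sqrt n$, where the peak $i^\star$ sits near the endpoint $n$ and one has essentially no room to the right; handling this by switching to the left side of the peak and verifying that the lacunarity and closeness-to-peak estimates still hold uniformly down to the edge of the support is the only non-routine bookkeeping. Once all three axioms are established, Theorem~\ref{replace} delivers the claimed bound $\tilde C n^{-c_0}$ (for a redefined $c_0$).
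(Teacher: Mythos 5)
Your overall architecture is exactly the paper's: verify axioms (i)--(iii) of Theorem~\ref{replace} via Proposition~\ref{critloc} plus Lemma~\ref{lemma:concentration-general} for non-clustering, and Theorem~\ref{tmp-general} for comparability of log-magnitudes; your lacunary-subsequence construction (switching to the left of the Poisson peak when $|z|\sim\sqrt n$) and your Poisson-supremum derivation of the delocalization bound \eqref{maxbound} are the same computations the paper carries out with Lemma~\ref{lemma:length} and Stirling. Your direct treatment of non-degeneracy via $\Pr(\xi=0)\le 1-c$ is a harmless (and slightly more elementary) substitute for the paper's route, which extracts non-vanishing from Proposition~\ref{critloc} itself.

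The one step whose justification does not hold up is your item (b) in the non-clustering argument: the claim that $\int_{B(z_0,r)}\Delta\bigl(\tfrac12\log V\bigr)=O(r^2)$ ``follows since $\log V(z)$ is close to $|z|^2$ in the bulk and thus has Laplacian $\asymp 4$ there.'' A pointwise approximation $\log V=|z|^2+O(n^{o(1)})$ gives no control whatsoever on $\Delta\log V$, and moreover your reference points are allowed to satisfy $|z_i|\le\sqrt n+C$, so the balls $B(z_i,r)$ meet the edge region where $\log V$ is \emph{not} close to $|z|^2$ (there it is close to $2n\log|z|-n\log n+n$). The needed bound is true --- writing $t=|z|^2$ one has $\Delta\log V=4\,\tfrac{d}{dt}\E[N_t]=\tfrac{4}{t}\Var(N_t)$ for $N_t$ a Poisson$(t)$ variable conditioned on $\{N_t\le n\}$, which is $O(1)$ --- but that is a separate computation you have not supplied. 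The paper avoids it entirely by feeding into Proposition~\ref{critloc} not $\tfrac12\log V$ but the idealized function $G_0$ equal to $\tfrac12|z|^2$ on $B(0,\sqrt n)$ and $n\log|z|-\tfrac12 n\log n+\tfrac12 n$ outside, for which $\Delta G_0=2\cdot 1_{B(0,\sqrt n)}$ exactly; since $\tfrac12\log V=G_0+O(n^{o(1)})$, the concentration hypothesis of Proposition~\ref{critloc} is unaffected and the $\Delta G$ integrals become trivial. You should either adopt that device or prove the pointwise bound $\Delta\log V=O(1)$ honestly. (A second, smaller omission: for $r$ comparable to $|z_j|-n^{\eps}$ the sampling annulus in Proposition~\ref{critloc} can intersect $B(0,n^{\eps})$ where concentration fails; the paper enlarges $r$ and re-runs the argument with a smaller exponent $\eps'$ to keep the loss below $n^{1/A}$, and your dyadic union bound needs the same patch.)
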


Informally, this theorem establishes local universality of the zeroes in the bulk and edge of the spectrum, except when one is near the origin.  Note that we do not expect universality near the origin, since $\P( f_{n,\xi}(0) = 0 ) = \P(\xi=0)$ is clearly not universal in $\xi$; see \cite{bleher} for further discussion of this issue (in the context of elliptic polynomials rather than flat ones).  Away from the disk $B(0,\sqrt{n})$, one expects very few zeroes, which suggests that Theorem \ref{tmt} should also hold in this case, but our methods do not cover this regime.

We also remark that a result similar to Theorem \ref{tmt} has recently been established (by a rather different method) by \cite{ledoan}.  In our language, the results in \cite{ledoan} establish universality for the distribution of the random variable
$$ \sum_{i=1}^n \phi(\zeta_i - z)$$
where $\phi$ is a continuous, compactly supported function independent of $n$,  and $z$ is close to the boundary of the spectrum.  The main idea is to establish a central limit theorem for a normalized partial Taylor series expansion of $f$ around $z$.  Their argument is simpler than the one given here, but does not appear to give a uniform polynomial rate of convergence as in Theorem \ref{tmt} (or Theorem \ref{tmt-real} below), which is needed in some of our applications.

\vskip2mm

In the case when the coefficients are real, we obtain the following two moment theorem for the mixed correlation functions. 

\begin{theorem}[Two moment theorem for flat polynomials; real case]\label{tmt-real}  Let $k,l \geq 0$, $\eps > 0$, and $C>0$ be constants with $k+l > 0$.  Let $n$ be a natural number, Let $f_{n,\xi}, f_{n,\tilde \xi}$ be flat polynomials with atom distributions $\xi, \tilde \xi$ being real random variables of mean zero and variance one obeying the bounds $\E |\xi|^{2+\eps}, \E |\tilde \xi|^{2+\eps} \leq C$.
Let  $x_1,\ldots,x_k \in \R$ and $z_1,\ldots,z_l \in \C$ be quantities depending on $n$ with $n^\eps \leq |x_i|, |z_j| \leq \sqrt{n} + C$ for all $i=1,\ldots,k$, $j=1,\ldots,l$.  
Let $G: \R^k \times \C^l \to \C$ be a smooth function supported on $[-C,C]^k \times B(0,C)^l$ that obeys the bounds
$$
|\nabla^a G(w)| \leq C
$$
for all $0 \leq a \leq 5(k+l) + 1$ and all $w \in \R^k \times \C^l$. Then
\begin{align*}
& \Big| \int_{\R^k} \int_{\C^l} G(y_1,\ldots,y_k,w_1,\ldots,w_l) \\
&\quad\quad \rho^{(k,l)}_{f_{n,\xi}}(x_1+y_1,\ldots,x_k+y_k,z_1 + w_1,\ldots,z_l + w_l)\ dw_1 \ldots dw_l dy_1 \ldots dy_l \\
&\quad - 
\int_{\R^k} \int_{\C^l} G(y_1,\ldots,y_k,w_1,\ldots,w_l) \\
&\quad\quad\quad \rho^{(k,l)}_{f_{n,\tilde \xi}}(x_1+y_1,\ldots,x_k+y_k,z_1 + w_1,\ldots,z_l + w_l)\ dw_1 \ldots dw_l dy_1 \ldots dy_l \Big|  \\
&\le \tilde C n^{-c_0}
\end{align*}
where $\tilde C$ depends only on $C, k, l$, and $c_0>0$ depends only on $k,l$.
\end{theorem}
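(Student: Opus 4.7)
The plan is to apply the real replacement principle (Theorem \ref{replace-real}) in a triangle-inequality sandwich through a Gaussian reference. Since the level repulsion axiom (iv) of that principle is most cleanly verified when the reference polynomial is Gaussian (via Lemma \ref{lemma:RB1}), I would introduce the flat polynomial $f_{n,g}$ whose atom distribution $g$ is the real Gaussian $N(0,1)_\R$, and establish Theorem \ref{tmt-real} separately for the pairs $(f_{n,\xi}, f_{n,g})$ and $(f_{n,\tilde\xi}, f_{n,g})$. In each application I would take $a_0 := 3$, $r_0 := 2C$, and $c_0$ a sufficiently small positive constant depending only on $\eps,k,l$. The second-order moment matching hypothesis needed for axiom (iii) is automatic because $\xi,\tilde\xi,g$ are all real and normalized to mean zero and unit variance.

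Axiom (i) is easy: the $(2+\eps)$-moment bound combined with unit variance forces $\P(\xi = 0)$ to be bounded away from $1$ by a constant depending only on $\eps,C$ (by Jensen's inequality), so $\P(f_{n,\xi} \equiv 0) \leq \P(\xi=0)^{n+1}$ is exponentially small. For axiom (ii), I would apply Proposition \ref{critloc} with $G(z) := \tfrac{1}{2}\log V(z)$; the required log-magnitude concentration follows from Lemma \ref{lemma:concentration-general} once we produce a factor-of-two lacunary subsequence of length $\omega(\log n)$ with smallest term at least $V(z)^{1/2}\exp(-n^{o(1)})$. For flat polynomials the terms $b_i := |z|^i/\sqrt{i!}$ are unimodal around $i_* := \lfloor |z|^2 \rfloor$ with consecutive ratios bounded by $\sqrt n$, and Stirling gives $b_{i_*} \asymp V(z)^{1/2}/|z|^{1/2}$. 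On the longer of the two monotone sides of the sequence (which has length $\geq n/2$), Lemma \ref{lemma:length} produces a lacunary chain; truncating at length $m := \lceil \log^2 n\rceil$ leaves the smallest term at least $b_{i_*}/2^m \geq V(z)^{1/2} e^{-O(\log^2 n)}$, comfortably above the required threshold. Proposition \ref{critloc} then yields $N_{B(\cdot,r)}(f) = \tfrac{1}{2\pi}\int_{B(\cdot,r)}\Delta G + O(n^{o(1)} r)$, and since $\Delta \log V(z) = O(n^{o(1)})$ on the relevant disks (a short direct computation from $V$), axiom (ii) follows after a union bound over a dyadic mesh of radii.

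Axiom (iii) is a direct invocation of Theorem \ref{tmp-general} once the delocalization bound \eqref{maxbound} is verified. Stirling gives $\max_i |z|^{2i}/i! \asymp V(z)/|z|$ in the range $n^\eps/2 \leq |z| \leq \sqrt n + O(1)$, hence $|c_i z^i|/V(z)^{1/2} = O(n^{-\eps/4})$ and the value $\alpha_1 := \eps/4$ suffices.

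The main obstacle is axiom (iv), the weak level repulsion for $f_{n,g}$. Because the axiom only concerns $(x,y) \in \R^2$ and $z \in \C$ with $|\Im z|\le 1/C$, we need only verify Lemma \ref{lemma:RB1} around each real $x_i$ and around $\Re z_j$ for those $z_j$ within $1/C + O(r_0)$ of the real axis. The crucial choice is the holomorphic normalization $R(z) := e^{-z^2/2}$, a formal inverse square root of $V(z)$: then $|v(z)|^2 = e^{-\Re(z^2)}\sum_{i=0}^n |z|^{2i}/i! \leq e^{-\Re(z^2)+|z|^2} = e^{2(\Im z)^2}$, which is bounded on $B(x,r_0)$, giving \eqref{vo}. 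For \eqref{vos}, the identity $R'/R = -z$ yields $v_i(x)v_j'(x) - v_j(x)v_i'(x) = R(x)^2(j-i)x^{i+j-1}/\sqrt{i!j!}$, so
$$|v(x)\wedge v'(x)|^2 = \frac{R(x)^4}{2x^2}\sum_{i,j=0}^n (j-i)^2 \frac{x^{2i}}{i!}\frac{x^{2j}}{j!}.$$
Expanding $(j-i)^2$ and applying the standard Poisson moment identities for $a_i := x^{2i}/i!$, the double sum evaluates by index-shift to $2x^2 V(x)^2$ plus truncation corrections of relative size $O(1/\sqrt n)$ coming from the tail at $i = n$; combined with $R(x)^4 = e^{-2x^2}$ and $V(x) \gtrsim e^{x^2}$ (the latter via the Poisson central limit theorem when $x^2$ approaches $n$), this gives $|v(x)\wedge v'(x)| \gg 1$. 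Lemma \ref{lemma:RB1} then delivers \eqref{rho-20} and \eqref{rho-01}, completing the verification of the four axioms. Theorem \ref{replace-real} then yields Theorem \ref{tmt-real} with final exponent $c_0/(1000(k+l))$, which is positive and depends only on $\eps,k,l$ as required.
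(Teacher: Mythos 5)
Your proposal is correct and follows essentially the same route as the paper: a triangle-inequality comparison of both atom distributions with the real Gaussian, verification of axioms (i)--(iii) of Theorem \ref{replace-real} via Proposition \ref{critloc}, Lemma \ref{lemma:concentration-general} with a truncated lacunary chain, and Theorem \ref{tmp-general}, and verification of axiom (iv) via Lemma \ref{lemma:RB1} with the normalization $R(z)=e^{-z^2/2}$ and the Poisson moment identities. Two quantitative claims are slightly off but harmless: the greedy chain's smallest term is bounded below by $b_{i_*}(2C)^{-m}$ rather than $b_{i_*}2^{-m}$ (still $V(z)^{1/2}\exp(-n^{o(1)})$), and the truncation correction to $\sum_{i,j}(j-i)^2 a_i a_j$ is \emph{not} relatively $O(1/\sqrt n)$ when $x^2$ is within $O(\sqrt n)$ of $n$ (a constant fraction of the Poisson mass is cut off), though the needed lower bound $|v(x)\wedge v'(x)|\gg 1$ still follows by restricting the sum to indices near $x^2$ as in the paper.
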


We prove these theorems as consequences of the previously stated results in Section \ref{section:flat}.  As an application of these results we are able to establish some new results about the number $N_\R$ of real eigenvalues of flat polynomials:

\begin{theorem}[Number of real zeroes of polynomials]\label{tmt-zeroes}  Let $n$ be a natural number, Let $f_{n,\xi}$ be a flat polynomial with atom distributions $\xi$ being a real random variables of mean zero and variance one obeying the bound $\E |\xi|^{2+\eps} \leq C$ for some $C,\eps>0$.  Then one has 
$$\E N_\R = \frac{2}{\pi} \sqrt{n} + O(n^{1/2-c}),$$
where the implied constant in the $O()$ notation depends only on $C,\eps$.  More generally, for any interval $I \subset \R$, one has 
$$ \E N_I = \frac{1}{\pi} |I \cap [-\sqrt{n},\sqrt{n}]| + O(n^{1/2-c}).$$
\end{theorem}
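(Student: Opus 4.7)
The plan is to combine the identity \eqref{enr}, the comparison principle of Theorem \ref{tmt-real} taken with $k=1, l=0$, and an explicit Kac--Rice computation in the real Gaussian case, in order to reduce $\E N_I$ for a general atom distribution to the known Gaussian baseline.

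First I would handle the Gaussian baseline, i.e.\ the case where $\xi$ is replaced by $g := N(0,1)_\R$. The polynomial $f_{n,g}$ is a centred real Gaussian process with covariance kernel $K(x,y) = \sum_{i=0}^n (xy)^i/i!$, so the Kac--Rice formula yields
$$\rho^{(1,0)}_{f_{n,g}}(x) = \frac{1}{\pi}\sqrt{\left.\partial_x \partial_y \log K(x,y)\right|_{y=x}}.$$
In the bulk $x^2 \leq n - n^\alpha$ for a small $\alpha>0$, the truncated exponential $K(x,x)$ and its $t$-derivatives differ from $e^{x^2}$ by only a polynomially small multiplicative factor (using tail bounds for the Poisson distribution), yielding $\rho^{(1,0)}_{f_{n,g}}(x) = \tfrac{1}{\pi}+O(n^{-c})$ uniformly; this is essentially the computation in \cite{EK}. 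Classical tail asymptotics for the incomplete exponential near the transition $x^2 \approx n$ confirm that the boundary window $|x| \in [\sqrt{n}-O(1),\sqrt{n}+O(1)]$ contributes only $O(1)$ to the expected real zero count and that the density decays rapidly outside this window. Integrating gives
$$\int_I \rho^{(1,0)}_{f_{n,g}}(x)\,dx = \tfrac{1}{\pi}\,|I \cap [-\sqrt{n},\sqrt{n}]| + O(n^{1/2-c}).$$

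Next I would transfer this estimate to the general atom distribution $\xi$ by a partition-of-unity argument. Fix a smooth bump $\psi:\R\to[0,1]$ supported on $[-1,1]$ with $\sum_{j\in\Z}\psi(x-j)=1$ and with uniformly bounded derivatives. For each integer $j$ with $n^\eps \leq |j| \leq \sqrt{n}+C$ and $B(j,1)\subset I$, apply Theorem \ref{tmt-real} with $k=1$, $l=0$, reference point $x_1:=j$, and test function $G(y):=\psi(y)$ (smooth with bounded derivatives up to order $5(k+l)+1=6$) to obtain
$$\left|\int \psi(y)\bigl(\rho^{(1,0)}_{f_{n,\xi}} - \rho^{(1,0)}_{f_{n,g}}\bigr)(j+y)\,dy\right| \leq \tilde{C} n^{-c_0}.$$
Summing over the $O(\sqrt{n})$ admissible integers $j$ gives a combined transfer error of $O(n^{1/2-c_0})$. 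The excluded regions $I \cap B(0,n^\eps)$, $I \cap \{|x|>\sqrt{n}+C\}$, and the two $O(1)$-neighbourhoods of the endpoints of $I$ are handled uniformly in $\xi$ via Proposition \ref{critloc} together with Lemma \ref{lemma:concentration-general} (whose lacunarity hypothesis for $c_i = 1/\sqrt{i!}$ is verified via Lemma \ref{lemma:length}): one obtains $N_{B(0,n^\eps)}(f_{n,\xi}) \leq n^{2\eps+o(1)}$ and $N_{\C\setminus B(0,\sqrt{n}+C)}(f_{n,\xi}) \leq n^{o(1)}$ with overwhelming probability, which combined with the deterministic bound $N \leq n$ produces the analogous estimates in expectation. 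Choosing $\eps$ so that $2\eps < 1/2-c$ then absorbs all these excluded contributions.

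The main obstacle I anticipate is the quantitative Gaussian baseline: deriving $\rho^{(1,0)}_{f_{n,g}}(x) = \tfrac{1}{\pi}+O(n^{-c})$ uniformly in the bulk with a polynomial rate (as opposed to mere pointwise convergence) requires a careful asymptotic expansion of $K(x,x)$ and its first two $t$-derivatives across the critical regime $x^2 \approx n$, together with a matching upper bound for the boundary window to ensure its contribution to $\E N$ is $O(n^{1/2-c})$. The partition-of-unity transfer and the non-clustering bookkeeping, by contrast, are largely mechanical given the tools established in Section \ref{section:guarantee} and the main theorems above.
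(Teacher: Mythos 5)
Your proposal is correct and follows essentially the same route as the paper: a unit-scale smooth decomposition of $I$ into $O(\sqrt{n})$ pieces transferred one at a time via Theorem \ref{tmt-real}, a Kac--Rice computation showing the Gaussian intensity equals $\frac{1}{\pi}+O(n^{-c})$ in the bulk, and non-clustering bounds (from Proposition \ref{critloc} and the concentration of $\log|f|$) to absorb the contributions near the origin, the endpoints, and outside $B(0,\sqrt{n}+C)$. The only cosmetic difference is at the edge: the paper excises an $n^{1/4}$-window around $\pm\sqrt{n}$ using the non-clustering bound (valid uniformly in the atom distribution), so the Gaussian computation is only needed where the Stirling truncation of the Poisson identities is clean, whereas you propose carrying the Gaussian asymptotics across the transition window $x^2\approx n$ --- classical, but slightly heavier than necessary.
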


We establish this result in Section \ref{section:flat} also.  With some additional calculation that we will sketch in that section, one can also obtain the bound $\Var N_I = O(n^{1-c})$, which by Chebyshev's inequality then tells us that
$$ N_I = \frac{1}{\pi} |I \cap [-\sqrt{n},\sqrt{n}]| + O(n^{1/2-c})$$
with probability $1-O(n^{-c})$.  Informally, this asserts that a global scales, the real zeroes of a flat real polynomial are asymptotically uniformly distributed in $[-\sqrt{n}, \sqrt{n}]$ with intensity $1/\pi$.  As a  matter of fact, our
local  universality results allow us to consider the number of real zeros in intervals of length $O(1)$. 

\subsection{Elliptic polynomials} 

We turn now to the elliptic polynomials 
 $$f'_{n,\xi} (z)= \sum_{i=0}^n \sqrt{ \binom{n}{i}} \xi_i z^i,$$
where we normalize $\xi$ to have mean zero and variance one.  As shown in \cite{kab}, the majority of the zeroes of this polynomial have norm $O(1)$ asymptotically almost surely. As a matter of fact, the limiting density function is $\frac{n}{\pi} (1+|z|^2)^{-2} $; see \cite{kab}.  In particular, we expect the typical separation between zeroes to be of the order of $1/\sqrt{n}$, in contrast to the flat case.  In order to renormalize the typical separation between zeroes to be comparable to one (which is the scale to which the replacement principle is adapted), we replace the polynomial $f'_{n,\xi} (z)= \sum_{i=0}^n \sqrt{ \binom{n}{i}} \xi_i z^i$ by the \emph{rescaled} version
\begin{equation} \label{newpoly1} 
f_{n,\xi}(z) :=\sum_{i=0}^n \sqrt { \binom{n}{i} n^{-i} } \xi_i z^i. 
\end{equation}

It is clear that if $z$ is a zero of the non-scaled polynomial then $\sqrt z$ is a zero of the rescaled one. In the following theorems (and their proofs), $f_n$ is the rescaled polynomial. 
By the results of \cite{kab}, the limiting density function for the rescaled elliptic functions is given by the formula
\begin{equation} \label{eqn:limit-e} 
\rho_e(z) := \frac{1}{\pi} (1+ |z|^2/n)^{-2}. 
\end{equation} 
This can be compared with the limiting density $\frac{1}{\pi} 1_{B(0,\sqrt{n})}(z)$ for flat polynomials.

We can now give the analogues of Theorems \ref{tmt}, \ref{tmt-real}.  

\begin{theorem}[Two moment theorem for elliptic polynomials; complex case]\label{tmt-elliptic} 
Let $k \geq 1$, $\eps > 0$, and $C>0$ be constants.  Let $n$ be a natural number and $f_{n,\xi}, f_{n,\tilde \xi}$ be rescaled elliptic polynomials with atom distributions $\xi, \tilde \xi$ being complex random variables of mean zero and variance one, matching moments to second order and also obeying the bounds $\E |\xi|^{2+\eps}, \E |\tilde \xi|^{2+\eps} \leq C$.
Let  $z_1,\ldots,z_k \in \C$ be quantities depending on $n$ with $n^\eps \leq |z_i| \leq C\sqrt{n}$ for all $i=1,\ldots,k$. 

Let $G: \C^k\to \C$ be a smooth function supported on the polydisc $B(0,C)^k$ that obeys the bounds
$$
|\nabla^a G(w)| \leq C
$$
for all $0 \leq a \leq 5k + 1$ and all $w \in \C^k$.  Then
\begin{align*}
&\Big| \int_{\C^k} G(w_1,\ldots,w_k) \rho^{(k)}_{f_{n,\xi}}(z_1 + w_1,\ldots,z_k + w_k)\ dw_1 \ldots dw_k \\
&\quad 
- \int_{\C^k} G(w_1,\ldots,w_k) \rho^{(k)}_{f_{n,\tilde \xi}}(z_1 + w_1,\ldots,z_k + w_k)\ dw_1 \ldots dw_k  \Big| \le \tilde C n^{-c_0}
\end{align*}
for some $\tilde C$ depending only on $k,\eps,C$, and some $c_0>0$ depending only on $\eps$.
\end{theorem}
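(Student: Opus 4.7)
The plan is to deduce Theorem~\ref{tmt-elliptic} from the complex replacement principle (Theorem~\ref{replace}), verifying its three axioms using the general tools assembled in Section~\ref{section:guarantee}. The structure of the argument mirrors the treatment of flat polynomials in Theorem~\ref{tmt}; the only genuinely new computations involve the binomial-coefficient form of the variance $V(z)$ and of the associated lacunary subsequence. I will set $a_0 := 3$ and $r_0 := C$, pick $c_0 > 0$ sufficiently small depending on $\eps$, and then apply Theorem~\ref{replace} directly. Axiom (i), non-degeneracy, is immediate: $f_{n,\xi}$ vanishes identically with probability at most $\Pr(\xi = 0)^{n+1}$, which is exponentially small in $n$ because $\xi$ has unit variance.

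For axiom (ii), non-clustering, the crucial computation is that for rescaled elliptic polynomials
$$V(z) = \sum_{i=0}^n \binom{n}{i} n^{-i}|z|^{2i} = (1 + |z|^2/n)^n,$$
so the natural deterministic proxy for $\log|f|$ is $G(z) := \tfrac{1}{2}\log V(z) = \tfrac{n}{2}\log(1+|z|^2/n)$, with Laplacian $\Delta G(z) = 2(1+|z|^2/n)^{-2} \leq 2$. Feeding this into Proposition~\ref{critloc} converts a concentration statement for $\log|f|$ into the bound $N_{B(z_0,r)}(f) \ll n^{o(1)} r^2$, which is precisely axiom (ii). The required concentration statement in turn follows from Lemma~\ref{lemma:concentration-general}. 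Writing $t := |z|^2/n$ and $p := t/(1+t)$, the squared magnitudes $b_i := |c_i z^i|^2 = \binom{n}{i} t^i$ satisfy $b_i/V(z) = \binom{n}{i} p^i (1-p)^{n-i}$, a genuine binomial mass function with mean $\mu := np$ and variance $\sigma^2 := np(1-p)$. The hypothesis $n^\eps \leq |z| \leq C\sqrt n$ yields $\sigma \geq c\, n^\eps$ uniformly. In the window $[i_*, i_* + M]$ (or $[i_* - M, i_*]$ when $i_*$ lies near $n$) with $i_* := \lfloor\mu\rfloor$ and $M := \lfloor \sqrt 2\, \sigma \log n\rfloor$, the consecutive ratios $b_i/b_{i+1} = (i+1)/((n-i)t)$ stay bounded by an absolute constant (since $M = o(\mu)$, using $nt \gg (\log n)^2$); Lemma~\ref{lemma:length} then produces a lacunary subsequence of length $\gg M^2/\sigma^2 \gg (\log n)^2 = \omega(\log n)$, while a standard Gaussian approximation shows that its final entry obeys $b_{i_*+M}/V(z) \gg \exp(-(\log n)^2)/\sigma \geq \exp(-n^{o(1)})$, exactly what Lemma~\ref{lemma:concentration-general} requires.

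For axiom (iii), comparability of log-magnitudes, Theorem~\ref{tmp-general} applies once the delocalization bound \eqref{maxbound} is checked. Since $b_i/V(z)$ is a binomial probability with variance $\sigma^2 \gg n^{2\eps}$, the local central limit theorem (or direct Stirling) gives $b_i/V(z) \ll \sigma^{-1} \ll n^{-\eps}$, hence $|c_i z^i| \leq n^{-\eps/2} V(z)^{1/2}$ and \eqref{maxbound} holds with $\alpha_1 := \eps/2$. All of these estimates extend automatically to the enlarged region $\bigcup_{j=1}^k B(z_j, 20 r_0)$ required by Theorem~\ref{replace}, since any $z'$ there still satisfies $n^\eps/2 \leq |z'| \leq 2C\sqrt n$ for $n$ large.

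The principal technical obstacle is ensuring uniformity of the lacunary construction across the two extreme regimes: the near-edge regime $t$ of order $1$ (where $\sigma$ is of order $\sqrt n$ and one may have to work to the left of $i_*$) and the deep-interior regime $t = n^{2\eps-1}$ (where $\sigma$ is of order $n^\eps$ only). The consecutive-ratio bound $b_i/b_{i+1} = O(1)$ and the Gaussian decay of $b_i/b_{i_*}$ inside the chosen window must be verified case-by-case, but both are routine. Once all three axioms are secured, Theorem~\ref{replace} produces the claimed estimate with $c_0>0$ depending only on $\eps$ and $\tilde C$ depending on $k$, $\eps$, and $C$.
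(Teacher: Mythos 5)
Your proposal is correct and follows essentially the same route as the paper: verify axioms (i)--(iii) of Theorem~\ref{replace} by computing $V(z)=(1+|z|^2/n)^n$, establishing concentration of $\log|f|$ via Lemma~\ref{lemma:concentration-general} and Lemma~\ref{lemma:length}, deducing non-clustering from Proposition~\ref{critloc}, and obtaining comparability from Theorem~\ref{tmp-general} with $\alpha_1=\eps/2$. Your binomial-mass-function framing of the lacunary construction and the direct $\P(\xi=0)^{n+1}$ argument for non-degeneracy are cosmetic variants of what the paper does (it works near the mode $\frac{|z|^2}{1+|z|^2/n}$ by the same monotonicity and ratio bounds, and gets non-vanishing from Proposition~\ref{critloc} instead).
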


Note that we allow our reference points $z_1,\ldots,z_k$ in the spectrum to have magnitude as large as $C \sqrt{n}$, as compared to the flat case where we can only establish universality up to magnitude $\sqrt{n}+C$.  This reflects the different nature of the spectrum in the elliptic case, which does not have an' edge at $\{|z|=\sqrt{n}\}$ in contrast to the flat case.  Note that the reflected polynomial $\tilde f_{n,\xi}(z) := z^n f_{n,\xi}(1/z)$ has the same distribution as $f_{n,\xi}$, so the law of the zeroes of $f_{n,\xi}$ is invariant with respect to the inversion map $z \mapsto 1/z$.  Because of this, one can invert Theorem \ref{tmt-elliptic} (and Theorem \ref{tmt-elliptic-real} below) to give universality results in the region $\sqrt{n}/C \leq |z| \leq n^{1-\eps}$ as well (albeit with some additional Jacobian factors that are powers of the $|z_i|$).  We omit the details.

\vskip2mm

\begin{theorem}[Two moment theorem for elliptic polynomials; real case]\label{tmt-elliptic-real} 
Let $k,l \geq 0$, $\eps > 0$, and $C>0$ be constants with $k+l > 0$.  Let $n$ be a natural number and  $f_{n,\xi}, f_{n,\tilde \xi}$ be rescaled elliptic polynomials with atom distributions $\xi, \tilde \xi$ being real random variables of mean zero and variance one obeying the bounds $\E |\xi|^{2+\eps}, \E |\tilde \xi|^{2+\eps} \leq C$.
Let  $x_1,\ldots,x_k \in \R$ and $z_1,\ldots,z_l \in \C$ be quantities depending on $n$ with $n^\eps \leq |x_i|, |z_j| \leq C \sqrt{n}$ for all $i=1,\ldots,k$, $j=1,\ldots,l$.  
Let $G: \R^k \times \C^l \to \C$ be a smooth function supported on $[-C,C]^k \times B(0,C)^l$ that obeys the bounds
$$
|\nabla^a G(w)| \leq C
$$
for all $0 \leq a \leq 5(k+l) + 1$ and all $w \in \R^k \times \C^l$. Then
\begin{align*}
& \Big| \int_{\R^k} \int_{\C^l} G(y_1,\ldots,y_k,w_1,\ldots,w_l) \\
&\quad\quad \rho^{(k,l)}_{f_{n,\xi}}(x_1+y_1,\ldots,x_k+y_k,z_1 + w_1,\ldots,z_l + w_l)\ dw_1 \ldots dw_l dy_1 \ldots dy_l \\
&\quad - 
\int_{\R^k} \int_{\C^l} G(y_1,\ldots,y_k,w_1,\ldots,w_l) \\
&\quad\quad\quad \rho^{(k,l)}_{f_{n,\tilde \xi}}(x_1+y_1,\ldots,x_k+y_k,z_1 + w_1,\ldots,z_l + w_l)\ dw_1 \ldots dw_l dy_1 \ldots dy_l \Big|  \\
&\le \tilde C n^{-c_0}
\end{align*}
where $\tilde C$ depends only on $C, k, l$, and $c_0>0$ depends only on $k,l$.
\end{theorem}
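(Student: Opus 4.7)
\textbf{Proof plan for Theorem \ref{tmt-elliptic-real}.}
The plan is to deduce the theorem from the real replacement principle (Theorem \ref{replace-real}), via the standard transitivity argument: introduce an auxiliary rescaled elliptic polynomial $\tilde f_g$ with real gaussian atom distribution, and compare both $f_{n,\xi}$ and $f_{n,\tilde\xi}$ to $\tilde f_g$. Once the four axioms of Theorem \ref{replace-real} are verified for each of these comparisons (with $\tilde f = \tilde f_g$ playing the role of the "nice" polynomial for the level repulsion step), the theorem follows by the triangle inequality, with the mild loss in the exponent absorbed by setting $c_0$ small enough.

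The main objects are fixed by $c_i^2 = \binom{n}{i} n^{-i}$, so that
\[
V(z) = \sum_{i=0}^n \binom{n}{i} (|z|^2/n)^i = \left(1 + |z|^2/n\right)^n.
\]
Axiom (i) (non-degeneracy) is automatic since $\P(\xi=0)<1$. For axiom (ii) (non-clustering), I would apply Proposition \ref{critloc} with $G(z) = \tfrac12 \log V(z) = \tfrac{n}{2}\log(1+|z|^2/n)$; the concentration hypothesis of that proposition will be supplied by Lemma \ref{lemma:concentration-general}. The lacunary subsequence required by the lemma is built from the coefficient profile $b_i := \binom{n}{i}(|z|^2/n)^i$, which is log-concave and unimodal with ratio $b_{i+1}/b_i = \frac{n-i}{i+1}\cdot \frac{|z|^2}{n}$; in the annulus $n^\eps \le |z| \le C\sqrt n$ one has $b_{\max}/b_0, b_{\max}/b_n = n^{\Theta(1)}$, so Lemma \ref{lemma:length} produces a subsequence of length $\gg \log n$ whose last term is at least $V(z)^{1/2} n^{-O(1)}$.

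For axiom (iii) (comparability of log-magnitudes), I apply Theorem \ref{tmp-general}. Its hypothesis \eqref{maxbound} is a delocalization bound for the binomial weights: $|c_i z^i|^2 / V(z) = \binom{n}{i} p^i (1-p)^{n-i}$ with $p = |z|^2/(n+|z|^2)$, which is a Binomial$(n,p)$ pmf. For $n^\eps \le |z| \le C\sqrt n$ we have $np(1-p) \gg n^{2\eps}$, so the maximum pmf value is $O(n^{-\eps})$, yielding \eqref{maxbound} with $\alpha_1 = \eps/2$. One then combines this with the moment-matching and $2+\eps$ integrability to conclude.

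Axiom (iv) (weak level repulsion for $\tilde f_g$) is the most delicate step, and I view it as the main obstacle. I would invoke Lemma \ref{lemma:RB1} with the holomorphic normalization $R(z) := (1+z^2/n)^{-n/2}$, well-defined on $B(x,r_0)$ once $|x|$ stays away from the roots of $1+z^2/n$. Writing $v_i(z) = R(z) c_i z^i = R(z) w_i(z)$, a direct calculation gives $v \wedge v' = R^2 \,(w \wedge w')$ and $|v|^2 = |R|^2 V$. Since $R(x)^2 V(x) = 1$ on the real axis, hypothesis \eqref{vo} holds with a uniform constant on $B(x,r_0)$ for small enough $r_0$. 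For \eqref{vos}, the ratio $|v(x)\wedge v'(x)|/|v(x)|^2 = |w(x)\wedge w'(x)|/V(x)$ is (by the Kac--Rice formula) exactly $\pi \rho^{(1,0)}_{\tilde f_g}(x)$, which on $|x| \le C\sqrt n$ is $\asymp (1+x^2/n)^{-2} \gtrsim (1+C^2)^{-2}$; combined with the lower bound $|v(x)|=1$ this yields \eqref{vos} with a constant depending on $C$. Lemma \ref{lemma:RB1} then delivers \eqref{rho-20}--\eqref{rho-01} for $\tilde f_g$. The complex case Theorem \ref{tmt-elliptic} provides the exponent bookkeeping model; the only real difference here is the additional repulsion input, and the main obstacle is a clean verification of \eqref{vos} uniformly in the allowed range of $x_i, z_j$, which reduces to showing the Gaussian density $\rho^{(1,0)}_{\tilde f_g}$ is bounded away from zero in the stated annulus—a computation that can be carried out explicitly from the Kac--Rice formula for the elliptic ensemble.
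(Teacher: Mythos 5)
Your proposal follows essentially the same route as the paper: compute $V(z)=(1+|z|^2/n)^n$, obtain concentration of $\log|f|$ via Lemma \ref{lemma:concentration-general} and Lemma \ref{lemma:length}, deduce non-clustering from Proposition \ref{critloc}, verify comparability via Theorem \ref{tmp-general} with the binomial delocalization bound, establish the gaussian level repulsion via Lemma \ref{lemma:RB1} with $R(z)=(1+z^2/n)^{-n/2}$, and conclude by Theorem \ref{replace-real} comparing both atom distributions to $N(0,1)_\R$. The only slip is that the real first intensity of the rescaled elliptic gaussian ensemble is $\tfrac{1}{\pi}(1+x^2/n)^{-1}$ rather than $\asymp(1+x^2/n)^{-2}$ (the latter is the complex intensity), but since only a lower bound $\gg_C 1$ is needed for \eqref{vos}, this does not affect the argument.
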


Finally, we can give an analogue of Theorem \ref{tmt-zeroes}:

\begin{theorem}[Real zeroes]\label{rz-e}  
Let $f_{n,\xi}$ be a rescalled  elliptic polynomial with  $\xi$ being a real random variable of mean zero and variance one obeying the bound $\E |\xi|^{2+\eps} \leq C$ for some $C,\eps>0$.  Then one has 
$$\E N_\R = \sqrt{n} + O(n^{1/2-c})$$
for some $c>0$ depending only on $\eps$, where the implied constant depends only on $C,\eps$.  More generally, one has
$$ \E N_I =  \int_I \frac{1}{\pi} \frac{dx}{1+x^2/n} + O(n^{1/2-c})$$
for any interval $I \subset \R$.
\end{theorem}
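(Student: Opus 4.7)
The plan is to reduce to the Gaussian case $\xi = g \sim N(0,1)_\R$ via the real two moment theorem (Theorem \ref{tmt-elliptic-real}). For the rescaled elliptic polynomial with Gaussian coefficients, the Kac--Rice formula yields the exact first intensity $\rho^{(1,0)}_{f_{n,g}}(x) = \frac{1}{\pi(1 + x^2/n)}$; integrating gives $\E N_I(f_{n,g}) = \int_I \frac{dx}{\pi(1+x^2/n)}$ and hence $\E N_\R(f_{n,g}) = \sqrt{n}$. This is the classical Edelman--Kostlan identity for the non-rescaled elliptic polynomial, transported through the rescaling $z \mapsto \sqrt n \, z$, which preserves the number of real zeroes.

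Fix a small $\eps > 0$ (say $\eps < 1/8$) and a large constant $C_0 > 1$, and decompose the real line as $\R = I_1 \cup I_2 \cup I_3$ with $I_1 := \{n^\eps \le |x| \le C_0 \sqrt n\}$, $I_2 := \{|x| < n^\eps\}$, and $I_3 := \{|x| > C_0 \sqrt n\}$. On $I_1 \cap I$ I would cover the region by $O(\sqrt n)$ bumps of unit width centered at points $x_j$ with $n^\eps \le |x_j| \le C_0 \sqrt n$, apply Theorem \ref{tmt-elliptic-real} with $k = 1$ and $l = 0$ for each bump (each yielding an $O(n^{-c_0})$ error), and sum to obtain a contribution of $O(n^{1/2 - c_0})$ to $\E N_{I_1 \cap I}(f_{n,\xi}) - \E N_{I_1 \cap I}(f_{n,g})$; boundary effects from approximating $\mathbf{1}_{I_1 \cap I}$ by a sum of bumps are $O(1)$ and hence absorbed.

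For the central region $I_2$ I would apply Proposition \ref{critloc} together with Lemma \ref{lemma:concentration-general} at $z_0 = 0$ with $r = 2n^\eps$ and $G(z) := \tfrac{1}{2}\log V(z) = \tfrac{n}{2} \log(1 + |z|^2/n)$; a direct computation gives $\Delta G(z) = 2(1+|z|^2/n)^{-2}$ and $\int_{B(0, 2n^\eps)} \Delta G \ll n^{2\eps}$, so $N_{B(0, 2n^\eps)}(f_{n,\xi}) \ll n^{2\eps + o(1)}$ with overwhelming probability. Since the real zeroes in $I_2$ lie in $B(0, n^\eps)$, and the residual bad event contributes at most $n \cdot n^{-A}$ to the expectation, this yields $\E N_{I_2 \cap I}(f_{n,\xi}), \E N_{I_2 \cap I}(f_{n,g}) = O(n^{2\eps + o(1)}) = O(n^{1/2 - c})$. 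The lacunarity hypothesis of Lemma \ref{lemma:concentration-general} is verified by applying Lemma \ref{lemma:length} to the log-concave sequence $|c_i z^i|^2 = \binom n i (|z|^2/n)^i$ on a window around its mode, where the consecutive ratios are bounded and which therefore admits lacunary subsequences of length far exceeding the required $\omega(\log n)$.

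For the far tail $I_3$, I would exploit the inversion symmetry of the elliptic ensemble: the non-rescaled polynomial $f'_{n,\xi}$ is distributionally invariant under $z \mapsto 1/z$, since the coefficients are iid and $\sqrt{\binom n i} = \sqrt{\binom n {n-i}}$; this translates to the zeroes of the rescaled $f_{n,\xi}$ being distributionally invariant under the involution $T: x \mapsto n/x$. Hence $\E N_{I_3 \cap I}(f_{n,\xi}) = \E N_{T(I_3 \cap I)}(f_{n,\xi})$, and $T(I_3 \cap I) \subset \{|x| < \sqrt n / C_0\} \subset I_1 \cup I_2$, so the previous two estimates apply (and likewise for $f_{n,g}$). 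Summing the three regional contributions yields $\E N_I(f_{n,\xi}) = \E N_I(f_{n,g}) + O(n^{1/2 - c})$, which combined with the Kac--Rice identity for $f_{n,g}$ is the desired conclusion. The main technical point to verify is the concentration hypothesis of Lemma \ref{lemma:concentration-general} uniformly on the annular shells required by Proposition \ref{critloc}; this amounts to combining the pointwise lemma with a union bound over a polynomially fine net together with a Lipschitz estimate for $\log|f_{n,\xi}|$ away from its zeros, both of which are routine but require some care.
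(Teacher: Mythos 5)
Your proposal is correct and follows essentially the same route as the paper: reduce to the real Gaussian case by covering with unit-scale bumps and applying Theorem \ref{tmt-elliptic-real}, control the central region via Proposition \ref{critloc} and Lemma \ref{lemma:concentration-general}, dispose of the far tail by the inversion symmetry $x \mapsto n/x$, and finish with the exact Kac--Rice/Edelman--Kostlan computation of $\rho^{(1,0)}$ for the Gaussian ensemble. The only cosmetic differences are your choice of thresholds ($n^\eps$ and $C_0\sqrt n$ versus the paper's $n^{1/4}$ and $\sqrt n$) and the order in which the inversion is invoked.
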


We establish these results in Section \ref{section:ellip}.

\subsection{Kac polynomials} 

We now turn to the situation of Kac polynomials $f(z)= f_{n,\xi}(z) = \sum_{i=0}^n \xi_i z^i$.  In the case that the atom distribution $\xi$ is a complex Gaussian $N(0,1)_\C$, then the first intensity $\rho^{(1)}_n$ can be computed explicitly from either the Kac-Rice formula from \cite{kac-0}, \cite{kac}, \cite{rice} or the formula of \cite{EK} as
$$ \rho^{(1)}_n(z) = \frac{1}{4\pi} \Delta \log \sum_{i=0}^n |z|^{2i}$$
which can be shown to be $(1+o(1)) n^2 F(a) $ if $||z|-1| = \frac{a}{n}$ for  constant $a>0$, with 
$$F(a):= \frac{1 - (a/\sinh a)^2}{4\pi a^2};$$
see  \cite{IZ}.

In particular, this shows that  the zeroes concentrate uniformly around the unit circle.

It turns out that there are some additional technical difficulties when using the methods of this paper to study Kac polynomials instead of elliptic or flat polynomials.  The singular nature of the limit first intensity at the unit circle is the most obvious such difficulty, but a less obvious difficulty is the partial breakdown of concentration of the log-magnitude $\log |f(z)|$ when $z$ is a root of unity.  For instance, consider the log-magnitude 
$$ \log |f_{n,\xi}(1)| = |\sum_{i=0}^n \xi_i|$$
at $1$.  If the atom distribution $\xi$ is Bernoulli  and $n$ is odd, then the RHS equals $0$ with probability $\frac{ \binom{n+1}{(n+1)/2}}{2^{n+1}} = \Theta (n^{-1/2})$. Therefore,  the logarithm  diverges to $-\infty$ with probability  $\Theta (n^{-1/2})$, which is not  strong enough for the purposes of Proposition \ref{critloc}.  In a similar spirit, the log-magnitude $\log |f_{n,\xi}(e^{2\pi \sqrt{-1} a/b})|$ when $a,b$ are coprime integers with $b$ bounded can be shown in the Bernoulli case to diverge to $-\infty$ with probability $\Theta(n^{-b/2})$; we omit the details.  To overcome this difficulty, we 
make use of  recent results in both inverse Littlewood-Offord theory (see \cite{nguyen}) and quantitative versions of Gromov's theorem (see \cite{shalom}). With these tools, we 
are able to  show that the roots of unity are essentially the only new obstruction to this concentration, allowing the rest of the theory to go through without much further modification.  In particular, we can establish the following local universality results for Kac polynomials;

\begin{theorem}[Two moment theorem for Kac polynomials; complex case]\label{tmt-kac} 
Let $k \geq 1$, $\eps > 0$, and $C>0$ be constants.  Let $n$ be a natural number, Let $f_{n,\xi}, f_{n,\tilde \xi}$ be Kac polynomials with atom distributions $\xi, \tilde \xi$ being complex random variables of mean zero and variance one, matching moments to second order and also obeying the bounds $\E |\xi|^{2+\eps}, \E |\tilde \xi|^{2+\eps} \leq C$.  Let $1/n \leq r \leq n^{-\eps}$ be a radius, and let  $z_1,\ldots,z_k \in \C$ be quantities depending on $n$ with 
$$r \leq \frac{1}{n} + ||z_i|-1| \leq 2r$$
for all $i=1,\ldots,k$. 

Let $G: \C^k\to \C$ be a smooth function supported on the polydisc $B(0,10^{-3})^k$ that obeys the bounds
$$
|\nabla^a G(w)| \leq C
$$
for all $0 \leq a \leq 5k + 1$ and all $w \in \C^k$.  Then
\begin{align*}
&\Big| \int_{\C^k} G(w_1,\ldots,w_k) r^{2k} \rho^{(k)}_{f_{n,\xi}}(z_1 + rw_1,\ldots,z_k + rw_k)\ dw_1 \ldots dw_k \\
&\quad 
- \int_{\C^k} G(w_1,\ldots,w_k) r^{2k} \rho^{(k)}_{f_{n,\tilde \xi}}(z_1 + rw_1,\ldots,z_k + rw_k)\ dw_1 \ldots dw_k  \Big| \le \tilde C n^{-c_0}
\end{align*}
for some $\tilde C$ depending only on $k,\eps,C$, and some $c_0>0$ depending only on $\eps$.
\end{theorem}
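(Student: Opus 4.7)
The plan is to follow the same template as the proofs of Theorems \ref{tmt} and \ref{tmt-elliptic}: rescale by the local mean-spacing factor $r$ so that the scale of separation between zeroes becomes comparable to $1$, and apply the complex replacement principle (Theorem \ref{replace}). Concretely, I would introduce the rescaled random polynomial $\hat f(w) := f_{n,\xi}(z_\ast + rw)$ (and similarly $\hat{\tilde f}$) around a convenient base point $z_\ast$ on the unit circle near the reference points; the $r^{2k}$ Jacobian in the statement is then exactly the change of variables turning $\rho^{(k)}_f(z_i + rw_i)$ into $\rho^{(k)}_{\hat f}(\hat z_i + w_i)$, where $\hat z_i := (z_i - z_\ast)/r$. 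The conclusion of Theorem \ref{replace} applied to $\hat f, \hat{\tilde f}$ then translates directly to the conclusion of Theorem \ref{tmt-kac}.

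Non-degeneracy (axiom (i)) is immediate. Comparability of log-magnitudes (axiom (iii)) follows from the two-moment theorem for log-magnitudes (Theorem \ref{tmp-general}), provided we verify the delocalization hypothesis $|c_i z'^i| \leq n^{-\alpha_1} V(z')^{1/2}$ for $z'$ in a neighbourhood of the reference points. For Kac polynomials, $|c_i z'^i| = |z'|^i \leq (1+O(r))^n$, while $V(z')^{1/2} = (\sum_i |z'|^{2i})^{1/2} \geq \sqrt{n+1}$; a short calculation gives a ratio of order $\sqrt{r}$ when $|z'|-1$ is comparable to $r$, and of order $1/\sqrt{n}$ when $|z'|$ is even closer to the unit circle, either of which is at most $n^{-\alpha_1}$ for $\alpha_1$ sufficiently small in terms of $\eps$.

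The genuinely new step is non-clustering (axiom (ii)), which I would extract from Proposition \ref{critloc} applied to a thin annulus around each $z_i$. This in turn reduces to establishing the concentration estimate $\log|f(z)| = \tfrac{1}{2}\log V(z) + O(n^{o(1)})$ with overwhelming probability for $z$ in the annulus. Whenever $||z|-1| \gg (\log n)/n$ the sequence $|z|^i$ contains a lacunary subsequence of length $\omega(\log n)$ (supplied by Lemma \ref{lemma:length} applied to $b_i = |z|^i$), so Lemma \ref{lemma:concentration-general} yields the desired concentration exactly as in the flat and elliptic cases. The difficult regime is $||z|-1| \ll (\log n)/n$, where no such lacunary subsequence exists and, as the Bernoulli example with $z=1$ shows, concentration of $\sum_i \xi_i z^i$ can fail with probability only polynomially small in $n$.

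The main obstacle, and the place where the Kac case departs from the earlier ensembles, is therefore showing that the roots of unity of small denominator are essentially the only obstruction to this concentration. As signalled in the text, the plan is to combine Nguyen's quantitative inverse Littlewood--Offord theorem (which forces the coefficients $(z^i)_{i=0}^n$ into an approximate low-rank GAP whenever small-ball concentration of $\sum_i \xi_i z^i$ fails with probability $\gg n^{-A}$) with Shalom's quantitative version of Gromov's polynomial growth theorem (which forces any such GAP among consecutive powers of $z$ to place $z$ within $n^{-O(1)}$ of a low-denominator root of unity). Away from a polynomially small neighbourhood of such roots of unity, the required concentration holds, and since Proposition \ref{critloc} only needs the concentration on the boundary of an annulus, one has enough freedom in the choice of the annulus radius to dodge these bad points even when the reference point $z_i$ itself lies near a low-order root of unity. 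Once this is in place, the rest of the argument — plugging into Proposition \ref{critloc} to obtain the non-clustering bound, and then invoking Theorem \ref{replace} — proceeds essentially as in the flat and elliptic cases.
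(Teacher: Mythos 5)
Your outline correctly identifies the headline new ingredient (Lemma \ref{cloc-kac}: concentration of $\log|f(z)|$ away from polynomially small neighbourhoods of low-order roots of unity, via Nguyen's inverse Littlewood--Offord theorem and the quantitative Gromov theorem of Shalom--Tao), and your verification of axiom (iii) via Theorem \ref{tmp-general} is essentially the paper's Proposition \ref{clm-kac}. (Minor slip there: the bound $V(z')^{1/2}\geq\sqrt{n+1}$ is false for $|z'|<1$; the correct lower bound is $V(z')\gg 1/r\geq n^{\eps}$, which still gives the delocalization you need.)

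However, there is a genuine gap at the final step, where you assert that one can now ``invoke Theorem \ref{replace}'' as in the flat and elliptic cases. After rescaling by $r$, the non-clustering axiom (ii) of Theorem \ref{replace} requires $N_{B(z'_i,r')}\leq Cn^{1/A}(r')^2$ for \emph{all} $r'\geq 1$. But the zeroes of a Kac polynomial concentrate on the unit circle with linear density comparable to $n$, so the best bound available from Proposition \ref{critloc} (this is Lemma \ref{nclb} of the paper) is $N_{B(z_0,s)}\ll n^{o(1)}(1+ns)$; in rescaled coordinates this reads $N_{B(z'_i,r')}\ll n^{o(1)}(1+nrr')$, which is linear in $r'$ with a prefactor $nr$ that can be as large as $n^{1-\eps}$. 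This violates axiom (ii) throughout the intermediate range $100\lesssim r'\lesssim n^{1-1/A}r$, so the replacement principle cannot be applied as a black box. The paper handles this by observing that axiom (ii) for $r'>100$ enters the proof of Theorem \ref{replace} only through Lemma \ref{squarebound}, and then reproving that lemma directly in the Kac setting: one starts from the crude deterministic bound $\|K_j\|_{L^2}\ll n^{O(1)}$, samples $m=n^{O(1)}$ points, and uses the pointwise bound $K_j(w_i)=O(n^{o(1)})$ supplied by Lemma \ref{cloc-kac} (subtracting the harmonic function $n\log(10^{-3}r|z|)$ from $\log|f|$, rather than the affine interpolant $L_j$, when $|z_j|>1$). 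Some such modification of the replacement principle, or an alternate variance bound, is indispensable; without it your argument does not close.
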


Note that the reference points $z_1,\ldots,z_k$ are required to remain at essentially the same distance $r$ from the unit circle.  It is possible to use the methods of this paper to obtain more general local universality results when the $z_1,\ldots,z_k$ are at widely differing distances from the unit circle, but this requires the generalization of Theorem \ref{replace} alluded to in Remark \ref{land}, and we omit this generalization in order to simplify the exposition.

As usual, we have an analogue of the above local universality result in the real case:

\begin{theorem}[Two moment theorem for Kac polynomials; real case]\label{tmt-kac-real} 
Let $k,l \geq 0$, $\eps > 0$, and $C>0$ be constants with $k+l > 0$.  Let $n$ be a natural number, Let $f_{n,\xi}, f_{n,\tilde \xi}$ be Kac polynomials with atom distributions $\xi, \tilde \xi$ being real random variables of mean zero and variance one obeying the bounds $\E |\xi|^{2+\eps}, \E |\tilde \xi|^{2+\eps} \leq C$.  Let $1/n \leq r \leq n^{-\eps}$ be a radius, and let  $x_1,\ldots,x_k \in \R$ and $z_1,\ldots,z_l \in \C$ be quantities depending on $n$ with 
$$r \leq \frac{1}{n} + ||x_i|-1|, \frac{1}{n} + ||z_j|-1| \leq 2r$$
for all $i=1,\ldots,k$, $j=1,\ldots,l$.  
Let $G: \R^k \times \C^l \to \C$ be a smooth function supported on $[-10^{-3},10^{-3}]^k \times B(0,10^{-3})^l$ that obeys the bounds
$$
|\nabla^a G(w)| \leq C
$$
for all $0 \leq a \leq 5(k+l) + 1$ and all $w \in \R^k \times \C^l$. Then
\begin{align*}
& \Big| \int_{\R^k} \int_{\C^l} G(y_1,\ldots,y_k,w_1,\ldots,w_l) \\
&\quad\quad r^{2k+l} \rho^{(k,l)}_{f_{n,\xi}}(x_1+ry_1,\ldots,x_k+ry_k,z_1 + rw_1,\ldots,z_l + rw_l)\ dw_1 \ldots dw_l dy_1 \ldots dy_l \\
&\quad - 
\int_{\R^k} \int_{\C^l} G(y_1,\ldots,y_k,w_1,\ldots,w_l) \\
&\quad\quad\quad r^{2k+l} \rho^{(k,l)}_{f_{n,\tilde \xi}}(x_1+ry_1,\ldots,x_k+ry_k,z_1 + rw_1,\ldots,z_l + rw_l)\ dw_1 \ldots dw_l dy_1 \ldots dy_l \Big|  \\
&\le \tilde C n^{-c_0}
\end{align*}
where $\tilde C$ depends only on $C, k, l$, and $c_0>0$ depends only on $k,l$.
\end{theorem}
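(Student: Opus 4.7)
My plan is to deduce Theorem \ref{tmt-kac-real} from the real replacement principle (Theorem \ref{replace-real}), applied after a simple rescaling. If $\zeta$ is a zero of $f_{n,\xi}$, then $\zeta/r$ is a zero of $g_n(w) := f_{n,\xi}(rw)$, so by a change of variables
$$ \rho^{(k,l)}_{g_n}(\tilde x_i + y_i, \tilde z_j + w_j) = r^{2k+l} \rho^{(k,l)}_{f_{n,\xi}}(x_i + r y_i, z_j + r w_j),$$
where $\tilde x_i := x_i/r$ and $\tilde z_j := z_j/r$; the same holds for $\tilde g_n(w) := f_{n,\tilde\xi}(rw)$. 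The difference of integrals in Theorem \ref{tmt-kac-real} is then exactly the quantity controlled by Theorem \ref{replace-real} applied to $(g_n, \tilde g_n)$ at the rescaled reference points $\tilde x_i, \tilde z_j$, with $r_0 := 1$, since $G$ is already supported in $[-10^{-3},10^{-3}]^k \times B(0,10^{-3})^l$. Each reference point lies in the annulus $\{w : r^{-1} - 2 \leq |w| \leq r^{-1}+2\}$.

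Axiom (i) of Theorem \ref{replace-real} is immediate. For axiom (iii) I invoke Theorem \ref{tmp-general} with delocalization parameter $\alpha_1 = \eps$: at any $w$ in a bounded neighborhood of a reference point one has $|(rw)^i|^2 = |rw|^{2i} = O(1)$ while $V(rw) = \sum_{i=0}^n |rw|^{2i}$ is of order $\min(n, 1/||rw|^2-1|) = \Theta(1/r)$, so $|(rw)^i|^2 \leq r V(rw) \leq n^{-\eps} V(rw)$, giving the hypothesis \eqref{maxbound}. For axiom (iv) only $\tilde g_n$ is involved, and I take $\tilde\xi = N(0,1)_\R$ and apply Lemma \ref{lemma:RB1}, choosing the holomorphic normalizer $R(z)$ on each small disk about a reference point so as to cancel the leading magnitude of the coefficient vector $v(z) = (R(z)(rz)^i)_{i=0}^n$; this makes $|v(z)| = \Theta(1)$, and the non-degeneracy bound \eqref{vos} reduces to a Cauchy-Schwarz computation comparing $\sum i |rz|^{2i}$ with $\sum |rz|^{2i}$, which is quantitatively non-trivial away from roots of unity.

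The main obstacle is axiom (ii), for which I use Proposition \ref{critloc}; its hypothesis is concentration of $\log |g_n(w)|$ around $G(w) := \frac{1}{2}\log V(rw)$ on the annular neighborhood. This concentration is also what Theorem \ref{tmp-general} ultimately requires in the background. Unlike the flat and elliptic cases, Lemma \ref{lemma:concentration-general} cannot be invoked directly: for Kac polynomials all coefficients $c_i$ equal $1$, so the sequence $|(rw)^i|$ admits no lacunary subsequence. As the preamble to Theorem \ref{tmt-kac} recalls, the log-magnitude genuinely fails to concentrate when $rw$ is close to a low-order root of unity (witness the Bernoulli example at $z=1$, where $\log|f_{n,\xi}(1)|$ equals $-\infty$ with probability $\Theta(n^{-1/2})$). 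To handle this I will combine the quantitative inverse Littlewood-Offord theorem of \cite{nguyen} with the quantitative Gromov-type structure theorem of \cite{shalom}: the first asserts that any exceptional small-ball concentration of the sum $g_n(w) = \sum \xi_i (rw)^i$ forces the sequence $((rw)^i)_{i=0}^n$ to sit inside a generalized arithmetic progression of low rank, and the second then forces $rw$ itself to lie polynomially close to a root of unity of order $O(1)$. Excising the resulting exceptional set of such $w$ (a union of $O(1)$ small disks, whose contribution to the integral in Theorem \ref{tmt-kac-real} is negligible by a Vitali covering argument and the trivial bound on $\rho^{(k,l)}$), one recovers polynomial-in-$n$ concentration of $\log|g_n(w)|$ away from that set, validating the hypothesis of Proposition \ref{critloc} and the concentration required inside Theorem \ref{tmp-general}.

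Once the four axioms hold for $g_n, \tilde g_n$ at the rescaled reference points, Theorem \ref{replace-real} yields a bound of the form $\tilde C n^{-c_0/(200(a_0+2)(k+l))}$; since $a_0, k, l$ are fixed constants (we take $a_0 = 3$ as in the complex case), this is of the form $\tilde C n^{-c_0}$, with $c_0$ depending only on $k,l$ (and on $\eps$ through the delocalization parameter $\alpha_1$). Undoing the change of variables recovers precisely the conclusion of Theorem \ref{tmt-kac-real}.
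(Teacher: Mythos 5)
Your overall strategy---rescale by $r$, feed the rescaled polynomials into Theorem \ref{replace-real} with $r_0=1$, verify comparability via Theorem \ref{tmp-general}, level repulsion via Lemma \ref{lemma:RB1}, and handle the failure of log-magnitude concentration near bounded-order roots of unity via inverse Littlewood--Offord plus quantitative Gromov---is indeed the paper's strategy. But there is a genuine gap in your treatment of the non-clustering axiom (ii), and it has nothing to do with roots of unity. Axiom (ii) of Theorem \ref{replace-real} demands $N_{B(\tilde x_i,\rho)}(g_n)\leq Cn^{1/A}\rho^2$ for \emph{every} $\rho\geq 1$. In original coordinates this is $N_{B(x_i,r\rho)}(f_{n,\xi})$, and once $r\rho$ exceeds the distance from $x_i$ to the unit circle the ball engulfs an arc of the circle, along which the Kac zeroes have linear density $\sim n$; the count is then of order $n^{o(1)}(1+nr\rho)$ (this is exactly what Proposition \ref{critloc} with $G=\frac12\log V$ delivers, since $\frac{1}{2\pi}\Delta G\approx n\,d\sigma$ is the uniform measure on the circle times $n$), which grows only linearly in $\rho$ but with the enormous coefficient $nr$. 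For $1\ll\rho\ll nr\cdot n^{-1/A}$---e.g.\ $\rho=10^3$ and $r=n^{-\eps}$ with $\eps$ small---this violates $Cn^{1/A}\rho^2$ no matter how you excise neighborhoods of roots of unity. So Theorem \ref{replace-real} cannot be invoked as a black box, and your claim that Proposition \ref{critloc} "validates" axiom (ii) is false at these intermediate radii.

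The paper's workaround is to reopen the proof of the replacement principle: the only place where non-clustering at radii $\rho>O(1)$ is used is the variance bound of Lemma \ref{squarebound} (which controls $\|K_j\|_{L^2}$ so that the Monte Carlo sampling of $\int \log|f|\,H_j$ converges). One substitutes an alternative proof of that lemma: the crude deterministic bound $\|K_j\|_{L^2}\ll n^{O(1)}$ together with Lemma \ref{sampling} applied to $m=n^{C}$ sample points reduces matters to the pointwise bound $K_j(w_i)=O(n^{o(1)})$, which follows from the concentration Lemma \ref{cloc-kac}; for reference points with $|z_j|>1$ one must first subtract the harmonic function $n\log(10^{-3}r|z|)$, which is legitimate since $H_j$ is the Laplacian of a compactly supported function and hence orthogonal to affine and harmonic pieces. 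In the real case one must additionally check that Lemma \ref{wlr} still goes through with Lemma \ref{nclb} substituted for the non-clustering axiom. Without these modifications your deduction does not close. Two smaller points: your delocalization computation actually yields $\alpha_1=\eps/2$ rather than $\eps$ in \eqref{maxbound}; and the wedge bound in axiom (iv) is a gaussian Kac--Rice computation that works at the real reference points even though they necessarily lie within $2r$ of the roots of unity $\pm1$---the relevant lower bound there is $|v(x)\wedge v'(x)|\gg 1/r$ (as in Proposition \ref{level-repuls-kac}), not $\gg 1$, so your parenthetical about avoiding roots of unity is misplaced.
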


 Notice that  a rescalling already took place in the conclusion of the theorems, so we do not need to rescale $f$ here. 
We establish these results in Theorem \ref{kac-sec}. As far as real roots are concerned, our results yield statements 
about the distribution of number of real roots in short l intervals (where the expectation of the number of real roots is $\Theta (1)$). 
To our best knowledge, such results have not been obtained anywehre else for general Kac polynomials.  On the other hand,  on the global scale, we do not obtain a  better  estimate than Ibragimov-Maslova bound. 

\subsection{General polynomials} 

Our result applies for general random polynomials of the form 
$f_{n, \xi}  = \sum_{i=0}^n c_i \xi_i z^i$ where the (deterministic) coefficients $c_i$ 
need to satisfy some mild conditions, but otherwise can be farily arbitrary. Thus, one can use our result to
derive information about the zeroes (in particular the real zeroes) of these polynomials. 

As an example, the expectation of the number of real zeroes of  $f_{n,\xi}$ in an interval 
 can be computed  using Kac formula or Edelman-Kostlan formula when $\xi_i$ are iid standard real gaussian. Our universality result (for the real case)
would should that this expectation remains asymptotically the same when  the atom variable $\xi$ is Bernoulli.
 As far as we know, prior to this paper, no general method has been available  to prove such a  result. 
The reader is invited to work out a few examples.

\section{Proof of the replacement principle, complex case}\label{tmt-sec}

In this section we  establish Theorem \ref{replace}. We will use the approach developed in  \cite[\S 6]{tv-iid}.

Fix $k,C,r_0,c_0,a_0$ as in Theorem \ref{replace}; all implied constants in the $O()$ notation will be allowed to depend on these parameters.  Let $A$ be defined by \eqref{adef}, let $n$ be a natural number, and let $z_1,\ldots,z_k$ be complex numbers and $f, \tilde f$ be random polynomials obeying the hypotheses of the theorem.  We may assume that $n$ is sufficiently large depending on the parameters $k, C, r_0, c_0, a_0$, as the claim is trivial otherwise.

By conditioning out the event that $f$ or $\tilde f$ vanish identically (which by the nondegeneracy axiom (i) only occurs with probability $O(n^{-A})$), we may assume that $f$ and $\tilde f$ are non-vanishing almost surely, as this conditioning does not significantly alter the hypotheses (i)-(iii) or conclusion of the theorem (after adjusting $C$ by a multiplicative constant).  This conditioning might destroy any independence properties enjoyed by the coefficients of the $f, \tilde f$, but this will not be an issue as such independence properties are not directly assumed in Theorem \ref{replace}.

The first observation to use Fourier analysis to reduce to proving the following variant of the conclusion of Theorem \ref{replace}: we have the bound
\begin{equation}\label{Exist}
\begin{split}
&\Big| \int_{\C^k} G(w_1,\ldots,w_k) \rho^{(k)}_{f}(z_1 + w_1,\ldots,z_k + w_k)\ dw_1 \ldots dw_k \\
&\quad 
- \int_{\C^k} G(w_1,\ldots,w_k) \rho^{(k)}_{\tilde f}(z_1 + w_1,\ldots,z_k + w_k)\ dw_1 \ldots dw_k  \Big| \ll n^{-c_0/4},
\end{split}
\end{equation}
whenever $G$ is a function of the form
\begin{equation}\label{gform}
 G(w_1,\ldots,w_k) = G_1(w_1) \dots G_k(w_k)
\end{equation}
for some smooth $G_1,\ldots,G_k: \C \to \C$ supported in $B(0,10r_0)$ and such that
\begin{equation}\label{gb}
 |\nabla^a G_j(w)| \ll 1
 \end{equation}
for all $0 \leq a\leq a_0$ and $1\leq j \leq k$.  

Indeed, suppose we had the bound \eqref{Exist}.  Now let $G$ be a function of the form required for Theorem \ref{replace}.  We view $B(0,r_0)$ as a subset of the square $[-1.1 r_0,1.1 r_0]^2$, which in turn we can identify with the torus $(\R/2.2 r_0\Z)^2$.  Thus $G$ can be viewed as a smooth function on the torus $(\R/(2.2 r_0)\Z)^{2k}$.  We can then expand $G$ as a Fourier series
$$ G(w) = \sum_{b,c \in \Z^k} g_{b,c} e^{2\pi \sqrt{-1} (b \cdot \Re(w) + c \cdot \Im(w)) / (2.2 r_0)}$$
in $[-1.1 r_0,1.1 r_0]^2$,  where the Fourier coefficients $g_{b,c}$ are given by the formula
$$ g_{b,c} := (2.2 r_0)^{-2k} \int_{B(0,r_0)^k} e^{2\pi \sqrt{-1} (b \cdot \Re(w) + c \cdot \Im(w)) / (r_0/4)} G(w)\ dw.$$
Let $\eta: \R \to \R$ be a function supported on $[-1.1 r_0,1.1 r_0]$ that equals one on $[-r_0,r_0]$.  We can then write
$$ G(w) = \sum_{b, c \in \Z^k} G_{b,c}(w)$$
for all $w \in \C^k$, where
$$ G_{b,c}(w) := g_{b,c} \prod_{i=1}^k \psi_{b,c,i}(w_i)$$
and
$$ \psi_{b,c,i}(w_i) := \prod_{i=1}^k e^{2\pi \sqrt{-1} (b_i \Re(w_i) + c_i \Im(w_i)) / (2.2 r_0)} \eta( \Re(w_i) ) \eta(\Im(w_i) ).$$
Observe that $\psi_{b,c,i}$ is supported on $B(0,10r_0)$ and that
$$ |\nabla^a G_{b,c}(w)| \ll (1 + |b| + |c|)^{a_0} |g_{b,c}|$$
for all $w \in \C^k$ and $0 \leq a \leq a_0$.  From \eqref{Exist} and the triangle inequality, we conclude that
\begin{align*}
&\Big| \int_{\C^k} G(w_1,\ldots,w_k) \rho^{(k)}_{f}(z_1 + w_1,\ldots,z_k + w_k)\ dw_1 \ldots dw_k \\
&\quad 
- \int_{\C^k} G(w_1,\ldots,w_k) \rho^{(k)}_{\tilde f}(z_1 + w_1,\ldots,z_k + w_k)\ dw_1 \ldots dw_k  \Big| \\
&\quad \ll n^{-c_0/4} \sum_{b,c \in \Z^k} |g_{b,c}| (1 + |b| + |c|)^{a_0}.
\end{align*}
On the other hand, from \eqref{gsmooth} and integration by parts we have
$$ |g_{b,c}| \ll (1+|b|+|c|)^{-(a_0+2k+1)} M$$
and Theorem \ref{replace} follows.

Now let $G$ be of the form \eqref{gform}.  For any $\alpha>0$, we call a statistic $X(f) \in \C$ of a random polynomial $f$ \emph{$\alpha$-insensitive} if one has
\begin{equation} \label{insensitive} \E  | X(f) - X(\tilde f) | =O(n^{-\alpha}).
\end{equation}  
Thus, for instance, the comparability axiom (iii) tells us that the statistic
$$ F( \log |f(z'_1)|, \ldots, \log |f(z'_{k'})| )$$
is $c_0$-insensitive for all $1 \leq k' \leq n^{c_0}$, $z'_1,\ldots,z'_{k'} \in \bigcup_{i=1}^k B(z_i,20r_0)$, and smooth $F: \C^{k'} \to \C$ obeying the derivative bounds
$$ |\nabla^a F(w)| \leq n^{c_0}$$
for all $w \in C^{k'}$ and $0 \leq a \leq a_0$.

It now suffices to  show that the statistic
\begin{equation}\label{gstat}
 \int_{\C^k} G(w_1,\ldots,w_k) \rho^{(k)}_n(z_1 + w_1,\ldots,z_k + w_k)\ dw_1 \ldots dw_k 
\end{equation}
is $c_0/4$-insensitive.

Let $\zeta_1,\ldots,\zeta_n$ denote the zeroes of $f$.  By \eqref{cord}, the quantity \eqref{gstat} is equal to
\begin{equation}\label{geo2}
\E \sum_{i_1,\ldots,i_k \hbox{ distinct}} G(\zeta_{i_1}-z_1,\ldots,\zeta_{i_k}-z_k)
\end{equation}
By the inclusion-exclusion formula, we may decompose this expression as
\begin{equation}\label{gstat-2}
\E \prod_{j=1}^k X_{z_j,G_j}
\end{equation}
plus a bounded number of lower order terms which are of the form \eqref{gstat-2} for a smaller value of $k$ (and different choices of $G_j$, and a subset of the $\{z_1,\ldots,z_k\}$), where $X_{z_j,G_j} = X_{z_j,G_j}(f)$ denotes the linear statistic
\begin{equation}\label{xdef}
 X_{z_j,G_j} := \sum_{i=1}^n G_j(\zeta_i - z_j).
\end{equation}
For instance, in the $k=2$ case, we have
\begin{align*}
\sum_{1 \le i \neq j \le n } G_1( \zeta_i - z_1) G_2 (\zeta_j -z_2)  &= \left[\sum_{i=1}^n G_1 ( \zeta_i -z_1)\right] \left[\sum_{j=1}^n G_2 (\zeta_j -z_2)\right]\\
&\quad - \sum_{i=1}^n G_1 (\zeta_i-z_1) G_2 (\zeta_i -z_2) \\
&= X_{z_1, G_1} X_{z_2, G_2} - X_{z_1, G_3},
\end{align*}
where
$$G_3 (\zeta):=  G_1 (\zeta) G_2 (\zeta -z_2 +z_1) . $$
Note that $G_3$ obeys similar bounds \eqref{gb} to $G_1,G_2$, though with a slightly different choice of implied constant.  Clearly, similar decompositions are also available for more general values of $k$.

By induction on $k$, it thus suffices to show that the expression \eqref{gstat-2} is $c_0/4$-insensitive.
By the non-clustering hypothesis,  we have $X_{z_j,G_j} = O(n^{1/A} )$ with probability at least $1 - O(n^{-A})$ for each $1 \leq j \leq k$, while from the pointwise bounds \eqref{gb} we have the crude deterministic bound $X_{z_j,G_j} = O(n)$. 
To use these bounds, we introduce a smooth approximation $P(\zeta_1, \dots, \zeta_k)$ of the product $\zeta_1 \dots \zeta_k$ such that 

\begin{itemize}

\item[(i)] $P(\zeta_1, \dots, \zeta_k) = \zeta_1 \dots \zeta_k$ on $B(0, n^{2/A}) ^k$;

\item[(ii)] $P$ is supported on $B(0, 2n^{2/A})^k $; and

\item[(iii)] $P$ obeys the derivative bounds
\begin{equation}\label{gtaylor}
 |\nabla^a P(\zeta_1,\ldots,\zeta_k)|  \ll n^{2k/A} = n^{\frac{c_0}{50a_0}}
 \end{equation}
for all $0 \leq a \leq a_0$ and $\zeta_1,\dots,\zeta_k \in \C$.
\end{itemize} 

For instance, we may define $P$ explicitly by the formula
$$ P(\zeta_1,\ldots,\zeta_k) := \prod_{i=1}^k \zeta_i \phi(|\zeta_i| / n^{2/A} )$$
where $\phi$ is a smooth function supported on $[-2,2]$ that equals $1$ on $[-1,1]$; it is easy to see that this choice of $P$ obeys all the axioms claimed above.

Using the non-clustering axiom (ii), we have 
$$ \prod_{j=1}^k X_{z_j,G_j} = P( X_{z_1,G_1},\ldots, X_{z_k,G_k} ) $$
with probability $1-O(n^{-A})$, and we have the crude deterministic bound
$$ \prod_{j=1}^k X_{z_j,G_j} = P( X_{z_1,G_1},\ldots, X_{z_k,G_k} ) + O( n^k )$$
outside of this event.  Taking expectations, we conclude that
$$
\E \prod_{j=1}^k X_{z_j,G_j} = \E P( X_{z_1,G_1},\ldots, X_{z_k,G_k} ) + O(n^{-A} n^{k} ). 
$$
By \eqref{adef}, we certainly have $n^{-A} n^{k} = O(n^{-c_0/4})$.  It thus suffices to show that the expression
\begin{equation}\label{pstat}
\E P( X_{z_1,G_1},\ldots, X_{z_k,G_k} ) 
\end{equation}
is $c_0/4$-insensitive.

From the fundamental theorem of algebra we have
\begin{equation}\label{fnz}
\log|f(z)| = a_n + \sum_{i: \zeta_i \neq \infty} \log|\zeta_i-z|
\end{equation}
for all $z \in \C$ and some almost surely finite quantity $a_n$ independent of $z$.  (Here we are using the previous reduction that $f$ almost surely does not vanish identically.)  By Green's theorem, \eqref{xdef}, and the smooth compactly supported nature of $G_j$, we conclude that
$$ X_{z_j,G_j} = \int_\C \log|f(z)| H_j(z)\ dz$$
where
$$ H_j(z) := -\frac{1}{2\pi} \Delta G_j(z-z_j)$$
and $\Delta = \frac{\partial^2}{\partial x^2} + \frac{\partial^2}{\partial y^2}$ is the Laplacian on $\C$.
Note that $H_j$ is a bounded smooth function supported on $B(z_j,C)$. 

We now recall a standard sampling lemma from \cite[Lemma 38]{tv-iid}:

\begin{lemma}[Monte Carlo sampling lemma]\label{sampling}  Let $(X,\mu)$ be a probability space, and let $F: X \to \C$ be a square-integrable function.  Let $m \geq 1$, let $x_1,\dots,x_m$ be drawn independently at random from $X$ with distribution $\mu$, and let $S$ be the empirical average
$$ S := \frac{1}{m} (F(x_1) + \dots + F(x_m)).$$
Then $S$ has mean $\int_X F\ d \mu$ and variance $\int_X (F-\int_X F\ d\mu)^2\ d\mu$.  In particular, by Chebyshev's inequality, one has
$$ \P( |S - \int_X F\ d \mu| \geq \lambda ) \leq \frac{1}{m\lambda^2} \int_X (F-\int_X F\ d\mu)^2\ d\mu$$
for any $\lambda > 0$. Equivalently,  for any $\delta > 0$ one has the bound
$$| S - \int_X F\ d \mu | \le \frac{1}{\sqrt{m\delta}} \left(\int_X (F-\int_X F\ d\mu)^2\ d\mu\right)^{1/2}$$
with probability at least $1-\delta$.
\end{lemma}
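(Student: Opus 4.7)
The plan is to prove this by direct computation of the first two moments of the empirical average $S$, followed by a one-line application of Chebyshev's inequality; this is essentially the content of the classical weak law of large numbers in the $L^2$ setting, so there is no significant obstacle. I will first write $S = \frac{1}{m} \sum_{i=1}^m F(x_i)$ and, by linearity of expectation together with the fact that each $x_i$ has law $\mu$, conclude that $\E S = \int_X F\, d\mu$. This part uses only integrability of $F$, which follows from square-integrability plus the finiteness of the measure.

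Next I will compute the variance. Write $\tilde F := F - \int_X F\, d\mu$, so that $\E \tilde F(x_i) = 0$ for each $i$. Then
\begin{equation*}
S - \int_X F\, d\mu = \frac{1}{m} \sum_{i=1}^m \tilde F(x_i),
\end{equation*}
and I will expand
\begin{equation*}
\E\left| S - \int_X F\, d\mu \right|^2 = \frac{1}{m^2} \sum_{i,j=1}^m \E\bigl( \tilde F(x_i) \overline{\tilde F(x_j)} \bigr).
\end{equation*}
By the joint independence of $x_1, \ldots, x_m$, the off-diagonal terms $i \neq j$ factor as $\E \tilde F(x_i)\, \overline{\E \tilde F(x_j)} = 0$, leaving only the $m$ diagonal terms, each equal to $\int_X |\tilde F|^2\, d\mu$. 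This yields the variance identity $\Var(S) = \frac{1}{m}\int_X (F - \int_X F\, d\mu)^2\, d\mu$ (where the displayed formula in the statement should be read with this $1/m$ factor, as is consistent with the Chebyshev bound that follows).

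Finally, Chebyshev's inequality applied to the complex-valued random variable $S - \int_X F\, d\mu$ immediately gives
\begin{equation*}
\Pr\left( \left| S - \int_X F\, d\mu \right| \geq \lambda \right) \leq \frac{\Var(S)}{\lambda^2} = \frac{1}{m \lambda^2} \int_X \left(F - \int_X F\, d\mu\right)^2 d\mu,
\end{equation*}
and the equivalent formulation with $\delta$ is obtained by setting $\lambda := (m\delta)^{-1/2} \bigl(\int_X (F - \int_X F\, d\mu)^2\, d\mu\bigr)^{1/2}$ and taking the complementary event. The only point worth flagging is that $F$ is complex-valued, so one should either decompose into real and imaginary parts before applying Chebyshev, or observe that Chebyshev for $|Z|$ follows from Markov applied to the nonnegative random variable $|Z|^2$; both routes are immediate.
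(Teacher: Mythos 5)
Your proof is correct and takes essentially the same route as the paper, whose own proof is just the two-sentence observation that the $F(x_i)$ are i.i.d.\ with the stated mean and that averaging gives the variance; your expansion of $\E|S-\int_X F\,d\mu|^2$ with the cross terms vanishing by independence is exactly the computation being invoked. You are also right that the displayed variance in the statement is missing the factor $1/m$ (a typo, as the subsequent Chebyshev bound confirms), and your handling of the complex-valued case via Markov applied to $|Z|^2$ is fine.
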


\begin{proof} The random variables $F(x_i)$ for $i=1,\dots,m$ are jointly independent with mean $\int_X F\ d \mu$ and variance $\frac{1}{m} \int_X (F-\int_X F\ d\mu)^2\ d\mu$.  Averaging these variables, we obtain the claim.
\end{proof}

Ideally, we would like to use the Markov sampling method (Lemma \ref{sampling}) to approximate  $\int_\C \log|f(z)| H_j(z)\ dz$. However, there is an obstacle: as $f$ can have many zeroes far from $z_j$, the error term 
given in Lemma \ref{sampling} can be too large.  To overcome this difficulty, we introduce a method to reduce the variance by exploiting the cancellation properties of the function $H_j$.  Indeed, note that $H_j$ is the Laplacian of a smooth compactly supported function, thus it is orthogonal to any (affine, real-) linear function by integration by parts.  To exploit this, we define a (random) affine real-linear function $L_j: \C \to \C$ by first selecting a reference complex number $w_{j,0}$ drawn uniformly at random from $B(z_j,1)$ (independently of all previous random quantities), and defining $L_j(z)$ to be the random affine real-linear function of $\Re(z), \Im(z)$ that equals $\log |f(z)|$ when $z = w_{j,0}, w_{j,0}+1, w_{j,0}+\sqrt{-1}$.  More explicitly, we have
 \begin{equation}\label{ldef}
\begin{split}
L(z) &:= \log|f(w_{j,0})| \\
&\quad + (\log|f(w_{j,0}+1)| - \log |f(w_{j,0})|) \Re( z - w_{j,0} ) \\
&\quad + (\log|f(w_{j,0}+ \sqrt{-1})| - \log|f(w_{j,0})|) \Im( z - w_{j,0} ).
\end{split} \end{equation} 
 
By the above observation, 
$$ \int_\C L_j(z) H_j(z)\ dz = 0$$
so we can write 
$$ X_{z_j,G_j} = \int_\C K_j(z) \ dz$$
where
$$ K_j(z) := (\log |f(z)|-L_j(z)) H_j(z).$$

The point now is that with high probability, $K_j$ has reasonably small $L^2$ norm:

\begin{lemma} \label{squarebound} For any constant $\delta >0$, we have
\begin{equation}\label{kujak}  \|K_j\|_{L^2} \le n^{\delta} 
\end{equation} 
for all $1\leq j \leq k$ with probability at least $1- O( n^{-\delta + 2/A}) - O(n^{-A+1})$. 
\end{lemma}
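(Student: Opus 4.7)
The plan is to exploit the factorisation \eqref{fnz} to write $\log|f(z)| = a_n + \sum_i \log|\zeta_i - z|$, and to note that since $L_j$ is itself the real affine interpolant of $\log|f|$ at the three sample points $w_{j,0}, w_{j,0}+1, w_{j,0}+\sqrt{-1}$, the constant $a_n$ drops out, leaving
$$\log|f(z)| - L_j(z) = \sum_i \bigl(\log|\zeta_i - z| - L_{\zeta_i,j}(z)\bigr),$$
where $L_{\zeta_i,j}$ is the real affine interpolant of $w \mapsto \log|\zeta_i - w|$ at those same three sample points. Since $H_j$ is uniformly bounded and supported in $B(z_j, O(r_0))$, bounding $\|K_j\|_{L^2}$ reduces to estimating the $L^2(B(z_j, O(r_0)))$ norm of the right-hand sum.

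Next I would split the zeros into a \emph{near} family $\CN := \{i : |\zeta_i - z_j| \leq 10 r_0\}$ and a \emph{far} family. A union bound (of total failure probability $O(n^{-A+1})$) over the $O(\log n)$ dyadic scales $r = 10 r_0, 20 r_0, 40 r_0, \ldots$ of the non-clustering axiom (ii) lets us assume that $N_{B(z_j, R)}(f) \ll n^{1/A} R^2$ simultaneously at every such $R$; in particular $|\CN| \ll n^{1/A}$. For each far zero $\zeta_i$, the function $w \mapsto \log|\zeta_i - w|$ is harmonic on $B(z_j, 5 r_0)$ with $|\nabla^2 \log|\zeta_i - w|| \ll |\zeta_i - z_j|^{-2}$ there, and the standard remainder estimate for three-point affine interpolation through non-collinear sample points then gives the pointwise bound $|\log|\zeta_i - z| - L_{\zeta_i,j}(z)| \ll |\zeta_i - z_j|^{-2}$ uniformly on $B(z_j, r_0)$. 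Summing this estimate in dyadic shells of $|\zeta_i - z_j|$ against the counting bound produces an $L^2(B(z_j, r_0))$ estimate of order $n^{1/A + o(1)}$ for the far contribution.

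The main obstacle is the near-zero sum, which must be controlled using the randomness of $w_{j,0}$. Since each $L_{\zeta_i,j}$ is real affine on $B(z_j, r_0)$ with values $\log|\zeta_i - w_{j,0} - \Delta|$ for $\Delta \in \{0, 1, \sqrt{-1}\}$ at the three sample points, one has the deterministic bound $\|L_{\zeta_i,j}\|_{L^2(B(z_j, r_0))} \ll \max_\Delta |\log|\zeta_i - w_{j,0} - \Delta||$, while the local $L^2$-integrability of $\log|\cdot|$ gives $\|\log|\zeta_i - \cdot|\|_{L^2(B(z_j, r_0))} \ll 1$ for each $i \in \CN$. The crucial observation is that since $w_{j,0}$ is uniform on $B(z_j, 1)$, the same local integrability yields $\E_{w_{j,0}} |\log|\zeta_i - w_{j,0} - \Delta||^2 \ll 1$ for fixed $\zeta_i \in \CN$ and $\Delta$, so summing over $i \in \CN$ and $\Delta$ gives
$$\E_{w_{j,0}} \sum_{i \in \CN} \|\log|\zeta_i - \cdot| - L_{\zeta_i,j}\|_{L^2(B(z_j, r_0))}^2 \ll |\CN| \ll n^{1/A}.$$
An application of Markov's inequality to this sum together with the Cauchy-Schwarz bound $\bigl\|\sum_{i \in \CN}(\cdots)\bigr\|_{L^2}^2 \leq |\CN| \sum_{i \in \CN} \|\cdots\|_{L^2}^2$ and the deterministic estimate $|\CN| \ll n^{1/A}$ yields $\bigl\|\sum_{i \in \CN}(\cdots)\bigr\|_{L^2(B(z_j, r_0))} \leq n^\delta$ outside an event of probability $O(n^{-\delta + 2/A})$. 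Combining with the far contribution, the bound $\|H_j\|_\infty \ll 1$, and a union bound over $1 \leq j \leq k$ completes the proof.
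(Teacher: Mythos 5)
Your argument is correct and follows essentially the same route as the paper's proof: the same per-zero decomposition against the three-point affine interpolants at $w_{j,0}, w_{j,0}+1, w_{j,0}+\sqrt{-1}$, the same dyadic union bound upgrading the non-clustering axiom to all radii simultaneously, the same $O(|\zeta_i - z_j|^{-2})$ cancellation of the linear part for far zeros summed over dyadic shells, and the same use of the randomness of $w_{j,0}$ together with local square-integrability of the logarithm and Markov's inequality for the near zeros. The only nitpick is constant-level bookkeeping: since $H_j$ is supported on $B(z_j,10r_0)$, the near/far cutoff should be placed at $20r_0$ (as in the paper) rather than $10r_0$, so that every far zero remains at distance comparable to $|\zeta_i - z_j|$ from both the support of $H_j$ and the sample points.
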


\begin{proof}  We follows the proof of \cite[Lemma 39]{tv-iid}.  Notice that by the union bound, it suffices to prove the claim for a single $j$.  We  split $K_j = \sum_{i: \zeta_i \neq \infty} K_{j,i}(z)$, where
$$ K_{j,i}(z) := (\log |z-\zeta_i| - L_{j,i}(z)) H_j(z)$$
and $L_{j,i}: \C \to \C$ is the random linear function that equals $\log |z-\zeta_i|$ when $z = w_{j,0}, w_{j,0}+1, w_{j,0}+\sqrt{-1}$.  By the triangle inequality, we thus have
$$ \|K_j\|_{L^2} \leq \sum_{i: \zeta_i \neq \infty} \|K_{j,i}\|_{L^2}.$$

By the non-clustering  axiom,  for each $z_j$ and $r \ge 1$, one has 
$$ N_{B(z_j,r)} \ll n^{1/A} r^2
$$
with probability at least $1-O(n^{-A})$.  By taking $r$ of the form $r= 2^i, 0 \le i \le \log_2 \sqrt n$ and using the union bound, we can conclude that 

\begin{equation} \label{uniformbound}  N_{B(z_j,r)} \ll n^{1/A} r^2,
\end{equation}  for all $z_j$ and any $r \ge 1$, with probability at least $1 - O(n^{-A+1})$. (Notice that if $r \ge  \sqrt n$ the bound holds trivially as there are at most $n$ zeroes overall.) 

We may now condition on the polynomial $f$ and assume it obeys \eqref{uniformbound}. The only remaining source of randomness are the $w_{j,0}$'s.  In particular, the zeroes $\zeta_i$ are now deterministic.   By Markov's inequality, it suffices to show that 
 \begin{equation} \label{markov} \E  \|K_j\|_{L^2} \ll n^{2/A}. \end{equation}  
 (The expectation is with respect to the $w_{j,0}$, of course.) 
  
Recall  that $H_j$ is supported in $B(z_j, 10r_0)$.  If $1 \leq i \leq n$ is such that $\zeta_i \in B(z_j, 20r_0)$, then a short computation (based on the square-integrability of the logarithm function) shows that the expected value of $\|K_{j,i}\|_{L^2}$ (averaged over all choices of $w_{j,0}$) is $O(1)$. By \eqref{uniformbound}, there are $O(n^{1/A})$ indices $i$ in this case. Thus, the total contribution from this case is $O(n^{1/A})$, which is acceptable.

Now, we consider the more delicate case  when  $\zeta_i \not \in B(z_j, 20r_0)$.  Let us write $z:=  w_{j,0 } + x + \sqrt{-1} y$ and Taylor expand $\log |z- \zeta_i|$
around the point $w_{j,0} -\zeta_i$. Since we only care about $z \in B(z_j, 10r_0)$, we have $|x| , |y| = O(1)$ in this neighborhood and so
\begin{equation}\label{Idef-1}  \begin{split}  
 \log |z-\zeta_i|  &= \log |x   + \sqrt{-1} y + w_{j,0} -\zeta_i | \\
 &\quad = \log |w_{j,0} -\zeta_i| + \frac{ \Re (w_{j,0} -\zeta_i ) } {  |w_{j,0} -\zeta_i |^2 } x +   \frac{ \Im (w_{j,0} -\zeta_i ) } {  |w_{j,0} -\zeta_i |^2 } y \\
 &\quad + O \left(\frac{1}{|w_{j,0} -\zeta_i|^2} \right) .\end{split} \end{equation}

Under the new notation,  we can write $L_{j,i}$ as 
 \begin{equation}\label{ldef-2}
\begin{split}
L_{j,i} (z) &:= \log|w_{j,0} -\zeta_i | \\
&\quad + (\log|w_{j,0}+1 - \zeta_i| - \log |w_{j,0} -\zeta_i|) x \\
&\quad + (\log|w_{j,0}+ \sqrt{-1} -\zeta_i| - \log|w_{j,0}- \zeta_i|) y.
\end{split} \end{equation} 

The point here that this almost cancels out the linear part in \eqref{Idef-1}. Indeed, by considering the Taylor expansion of  
$$\log|w_{j,0}+1 - \zeta_i| - \log |w_{j,0} -\zeta_i| $$ and 
$$\log|w_{j,0}+ \sqrt{-1} -\zeta_i| - \log|w_{j,0}-\zeta_i| $$ we easily see that the difference between $L_{j,i}$ and the linear part of \eqref{Idef-1} is at most 
$O( \frac{1}{|w_{j,0} -\zeta_i|^2 } ) $. Thus, we conclude that the (conditional) expectation of  $\|K_{j,i}\|_{L^2}$ (with respect to the random choice of 
$w_{j,0}$)  is   only $O( \frac{1}{|w_{j,0} -\zeta_i|^2} ) $. As $C \geq 1$, we can replace it by a more convenient bound 
$$O\left( \frac{1}{1+ |w_{j,0} -\zeta_i|^2}  \right) = O\left(\frac{1}{1+|\zeta_i - z_j|^2}\right) , $$ which also holds 
for $\zeta_i $ close to $z_j$.

  Summing over $i$,  we see that the (conditional) expected value of $\|K_j\|_{L^2}$ is at most
$$ O\left( \sum_{i: \zeta_i \neq \infty} \frac{1}{1+|\zeta_i - z_j|^2}\right) .$$

By \eqref{uniformbound}, the number of $\zeta_i$ such that $2^{l}  < |\zeta_i -z_j| \le 2^{l+1} $ is $O(n ^{1/A} 4^l )$, for all $0 \le l \le \log_2 \sqrt n$.  Furthermore, there are 
$O(n^{1/A})$ indices such that $|\zeta_i -z_j| \le 1$, and there are trivially at most $n$ indices for which $|\zeta_i-z_j| \geq \sqrt{n}$. Thus, the above sum is 

$$O\left( \sum_{l=0}^{\log_2 \sqrt n}   \frac{ n^{1/A} 4^l }{ 4 ^l} + \frac{n}{1+(\sqrt{n})^2}\right)  = O (n^{2/A}),$$
proves \eqref{markov} and hence the lemma. 
\end{proof}

  Let $\gamma_0,\gamma_1,\gamma_2$ be positive constants to be determined later (they will end up being constant multiples of $c_0$). Set $m := \lfloor n^{\gamma_0}\rfloor$, and for each $1 \leq j \leq k$ let $w_{j,1},\ldots,w_{j,m}$ be drawn uniformly at random from $B(z_j,10r_0)$, independently of $f$ and the $w_{j,0}$. 
 
 From Lemma \ref{squarebound}, we have that with probability $1- O(n^{-\gamma_1 +2/A} + n^{-A+1})$, we have $\| K_j \|_{L_2}  \le n^{\gamma_1} $. 
 If we condition on this event and use Lemma \ref{sampling} (with respect to the sample points $w_{j,1}, \dots, w_{j,m}$), then we have the estimate
$$X_{z_j,F_j} = \frac{\pi (10r_0)^2}{m} \sum_{i=1}^m K_j(w_{j,i}) +  O\left(  \frac{ 1}  {\sqrt { m n^{-\gamma_2} } }  n^{\gamma_1/2}  \right)$$
with probability $1 -O(n^{-\gamma_2} )$.

Putting all this together, we conclude that 

\begin{equation} \label{approximation} X_{z_j,F_j} = \frac{\pi (10r_0)^2}{m} \sum_{i=1}^m K_j(w_{j,i}) +  O\left( n^{- \frac{\gamma_0 -\gamma_1- \gamma_2}{2} }  \right) \end{equation}
\noindent with probability at least $1- O(n^{-\gamma_1 +2/A} + n^{-\gamma_2} + n^{-A+1}) $.  

Notice that if \eqref{approximation} holds, then by \eqref{gtaylor} we have

$$ P(X_{z_1,F_1},\dots,X_{z_k,F_k}) = P \left( \left(  \frac{\pi (10r_0)^2}{m} \sum_{i=1}^m K_j(w_{j,i}) \right)_{1 \leq j \leq k} \right) + O( n^{- \frac{\gamma_0 -(\gamma_1+ \gamma_2)}{2} } n^{\frac{c_0}{50a_0}}).     $$

\noindent Now we can estimate the expectation of $P$ as

\begin{eqnarray*}  \E P(X_{z_1,F_1},\dots,X_{z_k,F_k}) &=& \E P\left( \left(  \frac{\pi (10r_0)^2}{m} \sum_{i=1}^m K_j(w_{j,i}) \right)_{1 \leq j \leq k} \right) \\
&+&  O( n^{- \frac{\gamma_0 -(\gamma_1+ \gamma_2)}{2} } n^{\frac{c_0}{50a_0}}) \\
&+& O(n^{-\gamma_1 +2/A} +  n^{-\gamma_2} + n^{-A+1} ) n^{\frac{c_0}{50a_0}} . \end{eqnarray*} 

If we set $\gamma_0 =0.99 c_0, \gamma_1= \gamma_2 = 
0.3 c_0$ (say) and use \eqref{adef}, then it is easy to see that the two error terms on the RHS are of size $O(n^{-c_0/4} )$.   Furthermore,
with these choices of $\gamma_0,\gamma_1,\gamma_2$, one sees (using \eqref{gtaylor}) that the statistic
$$ \E P\left( \left(  \frac{\pi (10r_0)^2}{m} \sum_{i=1}^m K_j(w_{j,i}) \right)_{1 \leq j \leq k} \right) $$
obeys the hypotheses required for the comparability axiom (iii) and is thus $c_0$-insensitive, uniformly for all deterministic choices of $w_{j,0} \in B(z_j,1)$ and $w_{j,l} \in B(z_j,C)$; $l=1, \dots, m$. 
It follows that  $\E P(X_{z_1,F_1},\dots,X_{z_k,F_k}) $ is $c_0/4$-insensitive, concluding the proof of the theorem.

\section{Proof of the replacement principle, real case }\label{real-sec}

We now prove Theorem \ref{replace-real}.  
Let $k,l,C,c_0,r_0,a_0$ be as in that theorem; all implied constants in the $O()$ notation will be allowed to depend on these parameters.  Let $A$ be defined by \eqref{adef-real}.  Let $n$ be a natural number, and let $x_1,\ldots,x_k$, $z_1,\ldots,z_l$, and $f=f_n, \tilde f=\tilde f_n$ obeying the hypotheses of the theorem.  We may assume that $n$ is sufficiently large depending on $k,l,C,c_0,r_0,a_0$, as the claim is trivial otherwise.

As in the proof of Theorem \ref{replace}, we may assume that $f_n$ and $\tilde f_n$ are almost surely non-vanishing, and that $m$ and $M$ are equal to $1$.

Write $c_1 := \frac{c_0}{100(a_0+2)(k+l)}$.  By the Fourier-analytic arguments of the previous section, it will suffice to show that the quantity
\begin{equation}\label{insens}
\begin{split}
&\int_{\R^k} \int_{\C^l} G(y_1,\ldots,y_k,w_1,\ldots,w_l) \\
&\quad \rho^{(k,l)}_{f}(x_1+y_1,\ldots,x_k+y_k,z_1 + w_1,\ldots,z_l + w_l)\ dw_1 \ldots dw_l dy_1 \ldots dy_l 
\end{split}
\end{equation}
is $c_1$-insensitive, whenever $G$ takes the form
$$ G(y_1,\ldots,y_k,w_1,\ldots,w_l) = F_1(y_1) \ldots F_k(y_k) G_1(w_1) \ldots G_l(w_l)$$
where $F_i: \R \to \C$ and $G_j: \C \to \C$ are smooth functions supported on $[-10r_0,10r_0]$ and $B(0,10r_0)$ respectively, such that
$$ |\nabla^a F_i(x)|, |\nabla^a G_j(z)| \ll 1$$
for all $1 \leq i \leq k; 1 \leq j \leq l$, $0 \leq a \leq a_0$, and $x \in \R, z \in \C$.

By repeating the inclusion-exclusion arguments in the complex case, by separating the spectrum into contributions from $\R, \C_+, \C_-$ (and increasing $C$ as necessary), it  suffices to show that the quantity
\begin{equation}\label{egf}
 \E 
\left(\prod_{i=1}^{\tilde k} X_{\tilde x_i,F_i,\R}\right)
\left(\prod_{j=1}^{\tilde l} X_{\tilde z_j,G_j,\C_+}\right)
\left(\prod_{j'=1}^{\tilde l'} X_{\tilde z'_{j'},G'_{j'},\C_-}\right)
\end{equation}
is $c_1$-insensitive, where $\tilde k \leq k$ and $\tilde l + \tilde l' \leq l$, $\tilde x_1,\ldots,\tilde x_{\tilde k} \in \{x_1,\ldots,x_k\}$ and $\tilde z_1,\ldots,\tilde z_{\tilde l},\tilde z'_1,\ldots,\tilde z'_{\tilde l'} \in \{z_1,\ldots,z_l\}$, and 
$$ X_{x,F,\R} := \sum_{i: \zeta_i \in \R} F(\zeta_i- x)$$
and
$$ X_{z,G,\C_\pm} := \sum_{i: \zeta_i \in \C_\pm} G(\zeta_i- z),$$
and the $F_i: \R \to \C$, $G_j: \C \to \C$, $G'_{j'}: \C \to \C$ are smooth functions supported on $B(0,10r_0)$ obeying the bounds
$$ |\nabla^a F_i(x)|, |\nabla^a G_j(z)|, |\nabla^a G'_{j'}(z)| \ll 1$$
for all $0 \leq a \leq a_0$, $x \in \R$, $z \in \C$, and $\zeta_i$ enumerates the zeroes of $f$.  

As the zeroes of $f$ are symmetric around the real axis (and $f(\bar{z}) = \overline{f(z)}$), one has
$$ X_{z,G,\C_-} = X_{\overline{z}, \tilde G, \C_+}$$
where $\tilde G(z) := G(\overline{z})$.  Thus we may concatenate the $G_j$ with the $G'_{j'}$, and assume without loss of generality that $\tilde l'=0$, at the cost of placing $\tilde z_1,\ldots,\tilde z_{\tilde l}$ in $\{ z_1,\ldots,z_l,\overline{z_1},\ldots,\overline{z_l}\}$ rather than $\{z_1,\ldots,z_l\}$.  Thus we are now seeking to establish the $c_1$-insensitivity of
\begin{equation}\label{egf-2}
 \E 
(\prod_{i=1}^{\tilde k} X_{\tilde x_i,F_i,\R})
(\prod_{j=1}^{\tilde l} X_{\tilde z_j,G_j,\C_+}).
\end{equation}

On the other hand, by repeating the remainder of the arguments for the complex case with essentially no changes, we can show that the quantity
\begin{equation}\label{goo}
 \E \prod_{p=1}^m X_{z'_p, H_p}
\end{equation}
is $c_0/4$-insensitive for any $m \leq k+l$, any complex numbers $z'_1,\dots,z'_m$ in 
$$\bigcup_{i=1}^k B(x_i,20r_0) \cup \bigcup_{j=1}^l B(z_j,20r_0) \cup B(\overline{z_j},20r_0),$$ 
and any smooth $H_p: \C \to \C$ supported in $B(0,20r_0)$ and obeying the bounds
$$ |\nabla^a H_p(z)| \leq 1$$
for all $0 \leq a \leq a_0$ and $z \in \C$, where
$$ X_{z,H} := \sum_{i: \zeta_i \neq \infty} H(\zeta_i- z).$$
(Here we use the trivial remark that $\log |f(\overline{z})| = \log|f(z)|$, so that one can freely replace $\{z_1,\ldots,z_l\}$ by $\{z_1,\ldots,z_l.\overline{z_1},\ldots,\overline{z_l}\}$ in the comparability axiom (iii).)

We are going to  deduce the $c_1$-insensitivity of \eqref{egf-2} from the $c_0/4$-insensitivity of \eqref{goo}. The main idea is to extend a real function to a complex one without changing the value of the expectation in \eqref{goo} by too much. This will be the place where we make an essential use of the 
weak repulsion axiom (iv). 

Notice that from the non-clustering axiom and \eqref{adef-real} that 

\begin{equation}\label{holla}
\begin{split}
 \E |X_{\tilde x_i,F_i,\R}|^{\tilde k+\tilde l}, \E |X_{\tilde z_j,G_j,\C_+}|^{\tilde k+\tilde l} 
 &\ll n^{ (\tilde k+\tilde l)/A} + n^{-A +\tilde k+\tilde l }\\
 &\ll n^{ (k+l)/A}, 
\end{split}
\end{equation}
for all $1 \leq i \leq \tilde k$ and $1 \leq j \leq\tilde l$.

In the next, and critical, lemma, we use the weak repulsion hypotheses \eqref{rho-20}, \eqref{rho-01} to show that the there are very few complex 
zeroes near the real line. 

\begin{lemma}[Level repulsion]\label{wlr}  Let $\beta$ be an arbitrary small positive constant.  Let $x$ be a real number in the set
$$\bigcup_{i=1}^{\tilde k} B(\tilde x_i,50r_0) \cup \bigcup_{j=1}^{\tilde l} B(\tilde z_j,50r_0).$$
Let $\gamma := n^{-\frac{c_0}{20(a_0+2)}}$. Then we have
\begin{equation}\label{sole}
\P( N_{B(x,10\gamma)} \geq 2 ) \ll \gamma^{5/4}
\end{equation}
for both $f$ and $\tilde f$.
\end{lemma}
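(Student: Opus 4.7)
The plan is to handle $\tilde f$ directly from the weak repulsion axiom (iv) of Theorem~\ref{replace-real}, and then to transfer the bound to $f$ by applying the complex replacement principle (Theorem~\ref{replace}, already proved in Section~\ref{tmt-sec}) to the second factorial moment of a smooth counting statistic. The main difficulty will be bounding this factorial moment for $\tilde f$: axiom (iv) controls only $\rho^{(2,0)}_{\tilde f}$ and $\rho^{(0,1)}_{\tilde f}$, so a naive decomposition of $\E N(N-1)$ into real--real, real--complex, and complex--complex contributions does not close, since we have no pointwise bounds on $\rho^{(1,0)}$, $\rho^{(1,1)}$, or $\rho^{(0,2)}$. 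This obstacle will be overcome by combining a probability bound on $\{N \ge 2\}$ (obtainable directly from (iv)) with the deterministic non-clustering bound on $N_{B(x,1)}(\tilde f)$ given by axiom (ii).

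For $\tilde f$, I would first note that since $\tilde f$ has real coefficients, conjugation symmetry gives $N_{B(x,10\gamma)}(\tilde f) = N_\R + 2N_{\C_+}$, where $N_\R$ and $N_{\C_+}$ count zeros of $\tilde f$ in $B(x,10\gamma) \cap \R$ and $B(x,10\gamma) \cap \C_+$ respectively; hence $\{N_{B(x,10\gamma)}(\tilde f) \geq 2\} \subset \{N_\R \geq 2\} \cup \{N_{\C_+} \geq 1\}$. Since $10\gamma \leq 1/C$ for $n$ large, axiom (iv) may be invoked pointwise, yielding $\P(N_{\C_+} \geq 1) \leq \E N_{\C_+} \ll \gamma^2$ and $\P(N_\R \geq 2) \leq \tfrac12 \E N_\R(N_\R-1) \ll \gamma^2$ by the first and second moment methods. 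Together these give $\P(N_{B(x,10\gamma)}(\tilde f) \geq 2) \ll \gamma^2 \ll \gamma^{5/4}$.

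For $f$, I would pick a smooth cutoff $\phi: \C \to [0,1]$ equal to $1$ on $B(0,10\gamma)$ and supported in $B(0,20\gamma)$, with $|\nabla^a \phi| \ll \gamma^{-a}$, and set $Y(f) := \sum_i \phi(\zeta_i - x)$. Since $N_{B(x,10\gamma)}(f) \leq Y(f) \leq N_{B(x,20\gamma)}(f)$, one has $\P(N_{B(x,10\gamma)}(f) \geq 2) \leq \tfrac12 \E Y(Y-1)(f)$. I would then apply Theorem~\ref{replace} with $k=2$, $z_1 = z_2 = x$, and $G(w_1,w_2) := \phi(w_1)\phi(w_2)$, for which one may take $M = O(\gamma^{-(a_0+5)})$; the hypotheses (i)--(iii) of Theorem~\ref{replace} are directly inherited from those of Theorem~\ref{replace-real} (after enlarging the constant $C$ to account for the shifted reference point). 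The output is the transfer bound
\[
\bigl|\E Y(Y-1)(f) - \E Y(Y-1)(\tilde f)\bigr| \ll \gamma^{-(a_0+5)} n^{-c_0/4},
\]
and a short computation using $\gamma = n^{-c_0/(20(a_0+2))}$ (this is precisely the choice for which the exponent $5/4$ in the lemma is calibrated) shows that this transfer error is $\ll \gamma^{5/4}$.

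It then remains to show $\E Y(Y-1)(\tilde f) \ll \gamma^{5/4}$, which is where the obstacle identified in the first paragraph is resolved. The key observation is that $Y(Y-1) > 0$ forces $Y > 1$, hence $N_{B(x,20\gamma)}(\tilde f) \geq 2$; meanwhile, the non-clustering axiom (ii) applied with $r=1$ guarantees $N_{B(x,1)}(\tilde f) \leq M := Cn^{1/A}$ outside an event of probability $O(n^{-A})$, so $Y \leq M$ on that event. Conditioning on this event and its complement yields
\[
\E Y(Y-1)(\tilde f) \leq M^2 \, \P(N_{B(x,20\gamma)}(\tilde f) \geq 2) + n^2 \cdot O(n^{-A}).
\]
Rerunning the argument of the second paragraph for the slightly larger radius $20\gamma$ gives $\P(N_{B(x,20\gamma)}(\tilde f) \geq 2) \ll \gamma^2$, and the definition $A = 200(k+l)^2(a_0+2)/c_0$ is readily checked to be large enough to make both $n^{2/A}\gamma^2$ and $n^{2-A}$ of size $\ll \gamma^{5/4}$. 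Assembling the three estimates closes the argument for $f$.
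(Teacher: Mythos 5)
Your overall strategy is the same as the paper's: bound the event for $\tilde f$ directly from axiom (iv) via first and second moment methods, and transfer a smoothed second factorial moment from $\tilde f$ to $f$ using Theorem \ref{replace}, closing the loop with the non-clustering axiom (the paper closes the $\tilde f$ estimate by H\"older against a fourth moment bound rather than by truncating on the non-clustering event, but these are interchangeable and both yield an exponent strictly greater than $1$). However, there is one genuine gap: the inequality $\P( N_{B(x,10\gamma)}(f) \geq 2 ) \leq \tfrac12 \E Y(Y-1)(f)$ is not justified. Since $\phi$ takes values in $[0,1]$ rather than $\{0,1\}$, the statistic $Y=\sum_i \phi(\zeta_i-x)$ is not integer-valued, and $Y(Y-1)$ is \emph{negative} on the event $\{0<Y<1\}$. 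The second factorial moment method gives only
$$ 2\,\P( N_{B(x,10\gamma)}(f) \geq 2 ) \leq \E Y(Y-1)(f) + \tfrac14 \P( 0 < Y(f) < 1 ),$$
and the correction term is controlled by $\P(N_{B(x,20\gamma)}(f)\geq 1)$, i.e.\ by the first intensity of $f$ near $x$ at scale $\gamma$ --- a quantity that none of the axioms (i)--(iv) bounds at the required order $\gamma^{5/4}$ (axiom (iv) controls $\rho^{(2,0)}_{\tilde f}$ and $\rho^{(0,1)}_{\tilde f}$ but not $\rho^{(1,0)}$, and in any case only for $\tilde f$). So as written this step would fail.

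The repair is immediate and turns your argument into the paper's: replace $Y(Y-1)$ throughout by the off-diagonal sum
$$ Z := \sum_{i\neq j} \phi(\zeta_i-x)\,\phi(\zeta_j-x) \;=\; Y^2 - \sum_i \phi(\zeta_i-x)^2,$$
which is manifestly nonnegative, is at least $2$ on $\{N_{B(x,10\gamma)}\geq 2\}$, and is strictly positive only when $N_{B(x,20\gamma)}\geq 2$; moreover $\E Z$ is \emph{exactly} the quantity your $k=2$ application of Theorem \ref{replace} with $G(w_1,w_2)=\phi(w_1)\phi(w_2)$ transfers (whereas transferring $\E Y(Y-1)$ would additionally require a $k=1$ application for the diagonal term $\sum_i(\phi_i^2-\phi_i)$). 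With $Z$ in place of $Y(Y-1)$, every other step of your proposal --- the $\gamma^2$ bound for $\tilde f$ from axiom (iv), the truncation at $Cn^{1/A}$ via non-clustering, and the exponent bookkeeping --- goes through unchanged and reproduces the paper's proof, which works with the statistic $X_{x,H}^2 - X_{x,H^2}$ for precisely this reason.
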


The exponent $5/4$ is not optimal here, but any exponent greater than $1$ suffices for our application.

\begin{proof}  Let $H$ be a non-negative bump function supported on $B(x,20\gamma)$ that equals one on $B(x,10\gamma)$.  Observe that $X_{x,H}^2 - X_{x,H^2}$ is always non-negative, and is at least $2$ when $N_{B(x,10\gamma)} \geq 2$.  Thus by Markov's inequality, it suffices to show that
$$ \E X_{x,H}^2 - X_{x,H^2} \ll \gamma^{5/4}$$
for both $f$ and $\tilde f$.
By construction we see that the first $a_0$ derivatives of $H$ and $H^2$ are less than $n^{c_0/8}$, so by Theorem \ref{replace} we have
$$ \E X_{x,H}^2(f) = \E X_{x,H}^2(\tilde f) + O(n^{-c_0/8})$$
and similarly for $X_{x,H^2}$. Since $O(n^{-c_0/8}) = O(\gamma^2)$, we conclude that it will suffice to establish the claim for $\tilde f$:
$$ \E X_{x,H}^2(\tilde f) -X_{x,H^2}(\tilde f) \ll \gamma^{5/4}.$$
Arguing as in the proof of \eqref{holla}, one can establish the crude bound
$$ \E |X_{x,H}^2(\tilde f) -X_{x,H^2}(\tilde f)|^4 \ll  n^{4/A} \ll \gamma^{-1}.$$
Thus by H\"older's inequality, it suffices to show that
$$ \P (X_{x,H}^2(\tilde f) -X_{x,H^2}(\tilde f) \neq 0) \ll \gamma^2.$$
Next, observe that the expression $X_{x,H}^2(\tilde f) -X_{x,H^2}(\tilde f)$ vanishes if $\tilde f$ has at most one zero in $B(x,20\gamma) \cap \R$ and no zeroes in $B(x,20\gamma) \cap \C_+$.  Thus it suffices to show that
\begin{equation}\label{sole-1}
\P ( N_{B(x,20\gamma) \cap \C_+}(\tilde f) \geq 1 ) \ll \gamma^2
\end{equation}
and
\begin{equation}\label{sole-2}
\P( N_{B(x,20\gamma) \cap \R}(\tilde f) \geq 2 ) \ll \gamma^2.
\end{equation}
This will follow from the bounds
$$ \int_{B(x,20\gamma) \cap \C_+} \rho^{(0,1)}_{\tilde f}(z)\ dz \ll \gamma^2$$
and
$$ \int_{B(x,20\gamma) \cap \R} \int_{B(x,20\gamma) \cap \R} \rho^{(2,0)}_{\tilde f}(y,y')\ dy dy' \ll \gamma^2$$
respectively; but these are immediate from \eqref{rho-01}, \eqref{rho-20}.
\end{proof} 

\begin{remark} If one had some additional decay on the right-hand sides of \eqref{rho-01}, \eqref{rho-20} as $|x-y|$ or $\Im z$ went to zero, then one could improve the powers of $\gamma$ in the bound \eqref{sole}.  For instance, the level repulsion bounds provided by Lemma \ref{lemma:RB1} should permit an improvement of essentially one additional factor of $\gamma$. But for the argument here, any bound on this probability which decays as $O(\gamma^c)$ for some $c>1$ will suffice.
\end{remark}

Set 
\begin{equation}\label{gammaw}
\gamma := n^{-\frac{c_0}{20(a_0+2)}}, 
\end{equation}
and for any real number $x$, let $E_{x, \gamma}$ be the event that there are two zeroes $\zeta_i,\zeta_j$ of $f$ in the strip $S_{x,\gamma} := \{ z \in B(x,20r_0): \Im(z) \leq \gamma \}$ with $i \neq j$ such that $|\zeta_i-\zeta_j| \leq 2\gamma$.  Then 
by Lemma \ref{wlr} and a covering argument, we have $\P(E_{x,\gamma}) =O(\gamma^{1/4})$ whenever $x \in \bigcup_{i=1}^{\tilde k} B(\tilde x_i,10r_0) \cup \bigcup_{j=1}^{\tilde l} B(\tilde z_j,10r_0)$. 

From the symmetry of the spectrum, we observe that if $E_{x,\gamma}$ does not hold, then there cannot be any strictly complex zero $\zeta_i$ in the strip $S_{x,\gamma}$, since in that case $\overline{\zeta_i}$ would be distinct zero in the strip at a distance at most $2\gamma$ from $\lambda_i(M_n)$.  In particular, we see that
\begin{equation}\label{sala}
 \P( N_{S_{x, \gamma} \backslash [x-10r_0,x+10r_0]} = 0 ) = 1 - O(\gamma^{1/4})
\end{equation}
whenever $x \in \bigcup_{i=1}^{\tilde k} B(\tilde x_i,C) \cup \bigcup_{j=1}^{\tilde l} B(\tilde z_j,C)$.

We can use \eqref{sala} to simplify the expression \eqref{egf-2} in two ways. First we  may ``thicken'' each factor $X_{\tilde x_i,F_i,\R}$ by replacing it with $X_{\tilde x_i,\tilde F_i}$, where $\tilde F_{i}: \C \to \C$ is a smooth extension of $F_i$ that is supported on the strip $\{ z: |\Im(z)| \leq \gamma \}$, and more specifically
$$ \tilde F_i(z) := F_i(\Re(z)) \varphi(\Im(z)/\gamma)$$
where $\varphi: \R \to \R$ is a smooth function supported on $[-1,1]$ that equals one at the origin.  From \eqref{sala} and the non-clustering axiom (iii), we see that
$$ X_{\tilde x_i,F_i,\R} = X_{\tilde x_i,\tilde F_i} + D_i , $$ where

\begin{itemize} 

\item $D_i=0$ with probability $1- O(\gamma^{1/4})$;

\item $|D_i| \ll n^{1/A}$  with probability $1- n^{-A}$; and

\item $|D_i| \ll n$ with probability $1$.  
\end{itemize} 
In particular, from \eqref{adef-real} we have
\begin{equation} \label{holla1} 
\begin{split}
\E |X_{\tilde x_i,F_i,\R} - X_{\tilde x_i, \tilde F_i}|^{\tilde k+\tilde l} &\ll \gamma n^{(\tilde k+\tilde l)/A} + n^{\tilde k+\tilde l-A} \\
&\ll \gamma^{1/4} n^{(k+l)/A}.
\end{split}
 \end{equation}  
Furthermore, by performing a smooth truncation, we have the derivative bounds $\nabla^a \tilde F_i = O(\gamma^{-a_0})$ for $0 \leq a \leq a_0$.

In a similar vein, we replace each of the $G_j$ in \eqref{egf} with a function $\tilde G_j$ that vanishes on the half-plane $\{ z-z_j: \Im(z) \leq \gamma/2 \}$; more explicitly we set
$$ \tilde G_j(z) := G_j(z) \eta(\Im(z+z_j)/\gamma)$$
where $\eta:\R \to \R$ is a smooth function supported on $[1/2,\infty)$ that equals one on $[1,\infty)$.  Then we have
$$ X_{\tilde z_j, G_j,\C_+} = X_{\tilde z_j,\tilde G_j} + H_j, $$ 
where $H_j$ has properties similar to $D_i$.  In particular we have
\begin{equation} \label{holla3} \E |X_{\tilde z_j,G_j,\C_+} - X_{\tilde z_j, \tilde G_j}|^{\tilde k+\tilde l} \ll \gamma^{1/4} n^{(k+l)/A}.
 \end{equation}  
By telescoping the difference 
$$
(\prod_{i=1}^{\tilde k} X_{\tilde x_i,F_i,\R})
(\prod_{j=1}^{\tilde l} X_{\tilde z_j,G_j,\C_+}) -
(\prod_{i=1}^{\tilde k} X_{\tilde x_i,\tilde F_i})
(\prod_{j=1}^{\tilde l} X_{\tilde z_j,\tilde G_j})$$
and applying H\"older's inequality followed by \eqref{holla}, \eqref{holla1}, \eqref{holla3}, we see that
$$
\E \left|(\prod_{i=1}^{\tilde k} X_{\tilde x_i,F_i,\R})
(\prod_{j=1}^{\tilde l} X_{\tilde z_j,G_j,\C_+}) -
(\prod_{i=1}^{\tilde k} X_{\tilde x_i,\tilde F_i})
(\prod_{j=1}^{\tilde l} X_{\tilde z_j,\tilde G_j})\right| \ll \gamma^{1/4} n^{(k+l)^2/A}.$$
From \eqref{adef-real} and \eqref{gammaw} we see that the right-hand side is $O(n^{-c_1})$.  Thus, to show the $c_1$-insensitivity of \eqref{egf-2}, it suffices to show that the quantity
$$ \E (\prod_{i=1}^{\tilde k} X_{\tilde x_i,\tilde F_i})
(\prod_{j=1}^{\tilde l} X_{\tilde z_j,\tilde G_j})$$
is $c_1$-insensitive.  However, from the $c_0/4$-insensitivity of \eqref{goo} and the derivative bounds on $\tilde F_i, \tilde G_j$ (and homogeneity) we see that this quantity changes by at most
$$ O\left( n^{-c_0/4} (\gamma^{-a_0})^{\tilde k+\tilde l} \right)$$
when one replaces $f$ with $\tilde f$.  From \eqref{gammaw} this quantity is $O(n^{-c_1})$, and the claim follows.

%\section{Universality for Classical random polynomials} 

\section{Non-clustering via sharp concentration}\label{sample-sec}

In this section we prove Proposition \ref{critloc}.  
Let $n, f_n, z_0, c, r, G$ be as in that proposition.  Since the condition $\log |f(z)| = G(z)+O(n^{o(1)})$ can only hold when $f$ is non-vanishing, we see from the concentration axiom that $f$ is non-vanishing with overwhelming probability.  We now condition to the event that $f$ is non-vanishing, noting that this does not significantly impact the hypothesis or conclusion of the proposition, and so we assume henceforth that $f$ is almost surely non-vanishing.

We first prove the upper bound
\begin{equation}\label{upp}
 N_{B(z_0,r)}(f) \leq \frac{1}{2\pi} \int_{B(z_0,r)} \Delta G(z)\ dz + O( n^{o(1)} c^{-1} r ) + O\left( \int_{B(z_0,r+c) \backslash B(z_0,r-c)} |\Delta G(z)|\ dz \right)
\end{equation}
with overwhelming probability, and then explain how to modify the argument to obtain the matching lower bound at the end of this section.

Let $\varphi_+$ be a smooth function supported on $B(z_0,r+c)$ which equals $1$ on $B(z_0,r)$, is bounded between $0$ and $1$ on the annulus $B(z_0,r+c) \backslash B(z_0,r)$, and has the second derivative bound $|\nabla^2 \varphi_+| = O(c^{-2})$ on this annulus; such a function is easily constructed since $0 < c \leq r$.  Then
$$ N_{B(z_0,r)}(f) \leq \sum_{i=1}^n \varphi_+( \zeta_i )$$
where $\zeta_1,\ldots,\zeta_n$ are the zeroes of $f$.  Applying Green's theorem as in the proof of Theorem \ref{replace}, we have the identity
$$ \sum_{i=1}^n \varphi_+( \zeta_i ) = \frac{1}{2\pi} \int_\C (\Delta \varphi_+(z)) \log |f_n(z)|\ dz.$$
Meanwhile, from another application of Green's theorem we have
\begin{align*}
 \int_\C (\Delta \varphi_+(z)) G(z)\ dz &=  \int_\C \varphi_+(z) \Delta G(z)\ dz \\
&=  \int_{B(z_0,r)} \Delta G(z)\ dz + O( \int_{B(z_0,r+c) \backslash B(z_0,r)} |\Delta G(z)|\ dz ).
\end{align*}

Set
$ H(z) := |\log |f_n(z)| - G(z)|$;  by the triangle inequality, we thus have
$$ N_{B(z_0,r)}(f) \leq \frac{1}{2\pi} \int_{B(z_0,r)} \Delta G(z)\ dz + O\left( \int_{B(z_0,r+c) \backslash B(z_0,r)} |\Delta G(z)|\ dz \right)
+ O\left(  \int_\C |\Delta \varphi_+(z)| H(z)\ dz \right).$$

Since $\Delta \varphi_+$ is supported on $B(z_0,r+c) \backslash B(z_0,r)$ and has magnitude $O( c^{-2})$, it thus suffices by the triangle inequality to establish the upper bound
$$  \int_{B(z_0,r+c) \backslash B(z_0,r)} H(z)\ dz \ll n^{o(1)} c r.$$

We first observe a crude polynomial bound
\begin{equation}\label{lao}
 \int_{B(z_0,r+c) \backslash B(z_0,r)} |H(z)|^2\ dz  \ll n^{O(1)}
\end{equation}
with overwhelming probability.  To see this, first observe from \eqref{go} and the polynomial size bound on $r$ (and hence on $c$) that
$$  \int_{B(z_0,r+c) \backslash B(z_0,r)} |G(z)|^2\ dz  \ll n^{O(1)}$$
and so it suffices to show that
$$  \int_{B(z_0,r+c) \backslash B(z_0,r)} |\log |f_n(z)||^2\ dz  \ll n^{O(1)}.$$
Let $z_1$ be any element of $B(z_0,r+c) \backslash B(z_0,r)$.  By the hypotheses, we have $\log f_n(z_1) = O(n^{O(1)})$ with overwhelming probability, so it suffices by the triangle inequality again to show that
$$  \int_{B(z_0,r+c) \backslash B(z_0,r)} |\log |f_n(z)| - \log |f_n(z_1)||^2\ dz  \ll n^{O(1)}.$$
But as 
$$\log |f_n(z)| - \log |f_n(z_1)| = \sum_{1 \leq i \leq n: \zeta_i \neq \infty} \log |z-\zeta_i| - \log |z_1 - \zeta_i|,$$
the claim follows from yet another application of the triangle inequality, together with a direct calculation using the square-integrablity the log function $\log |z|$.

Now we apply Lemma \ref{sampling}. 
To use this lemma, let $m := n^A$ for some large fixed $A$ to be chosen later, and let $z_1,\ldots,z_m$ be drawn uniformly at random from the annulus
$B(z_0,r+c) \backslash B(z_0,r)$, independently of each other and of $f$ (and hence of $H$).  After temporarily conditioning $H$ to be fixed, applying Lemma \ref{sampling} to the normalised measure on the annulus $B(z_0,r+c) \backslash B(z_0,r)$, and then undoing the conditioning, we see from \eqref{lao} that one has
$$  \int_{B(z_0,r+c) \backslash B(z_0,r)} H(z)\ dz = |B(z_0,r+c) \backslash B(z_0,r)| \left(\frac{1}{m} \sum_{i=1}^m H(z_i) + O( n^{O(1)-A/4} ) \right)$$
with probability $1-O(n^{-A/2})$.  On the other hand, we have $|B(z_0,r+c) \backslash B(z_0,r)| \ll cr$, and from the hypothesis of concentration of the log-magnitude and the union bound (and after temporarily conditioning the $z_1,\ldots,z_m$ to be fixed) we see that with overwhelming probability, one has $H(z_i) =O(n^{o(1)})$ for all $i=1,\ldots,m$.  We conclude that
$$  \int_{B(z_0,r+c) \backslash B(z_0,r)} H(z)\ dz \ll n^{o(1)} cr + O( n^{O(1)-A/4} )$$
with probability $1-O(n^{-A/2})$, and the claim then follows by diagonalising in $A$ (and using the polynomial size of $c,r$).

This concludes the proof of the upper bound \eqref{upp} with overwhelming probability.  To prove the matching lower bound
$$ N_{B(z_0,r)}(f) \geq \frac{1}{2\pi}  \int_{B(z_0,r)} \Delta G(z)\ dz - O( n^{o(1)} c^{-1} r ) - O\left( \int_{B(z_0,r+c) \backslash B(z_0,r-c)} |\Delta G(z)|\ dz \right),$$
 one performs a similar argument but with $\varphi_+$ replaced by a test function $\varphi_-$ that equals $1$ on $B(z_0,r-c)$ and $0$ outside of $B(z_0,r)$; we leave the details to the interested reader.

\begin{remark}\label{conc-strong}  The above argument also establishes the following variant of Proposition \ref{critloc}; if one is willing to weaken the conclusion of Proposition \ref{critloc} from holding with overwhelming probability to that of holding with probability $1-O(n^{-A})$ for some fixed $A$, then one may also weaken the hypothesis in (i) from holding with overwhelming probability to that of holding with probability $1-O(n^{-B})$ for some $B$ depending on $A$.
\end{remark}

\section{Assumption verification: Proof of Lemma \ref{lemma:concentration-general}} \label{conc-sec}

We now prove Lemma \ref{lemma:concentration-general}.  We first need an elementary lemma of Paley-Zygmund type.

\begin{lemma}[Paley-Zygmund type lemma]\label{lam}  Let $\xi$ be a random variable of mean zero and variance one, and obeying the bound $\E |\xi|^{2+\eps} \leq C$ for some $\eps,C>0$.  Then one can find $A>1$ depending only on $\eps, C$ such that
$$ \P( A^{-1} \leq |\xi-\xi'| \leq A ) \geq A^{-1}$$
where $\xi'$ is an independent copy of $\xi$.
\end{lemma}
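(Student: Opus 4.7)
The plan is to work with $\eta := \xi - \xi'$, which has mean zero, variance $2$, and $(2+\eps)$-moment bounded by a constant $C'$ depending only on $\eps, C$ (using convexity/triangle inequality in $L^{2+\eps}$). We must show $\eta$ is not too concentrated at $0$ and not too dispersed, so that $|\eta|$ lives in a compact annulus $[A^{-1},A]$ with probability at least $A^{-1}$.

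The upper tail bound is immediate: by Markov applied to $|\eta|^{2+\eps}$,
\[
\P(|\eta| > A) \;\le\; \frac{C'}{A^{2+\eps}},
\]
which can be made smaller than any prescribed $\delta$ by choosing $A$ large enough depending only on $\eps, C$.

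The lower bound away from zero is the crux, and the issue is that we control only the $(2+\eps)$-moment, not the full second moment of $|\eta|^2$, so a naive Paley--Zygmund argument applied to $Z=|\eta|^2$ is unavailable. I would instead truncate: set $Z := \min(|\eta|^2, M)$ for a large parameter $M$ to be chosen. Using $(|\eta|^2 - M)_+ \le |\eta|^{2+\eps}/M^{\eps/2}$, one gets
\[
\E Z \;\ge\; \E|\eta|^2 - C'M^{-\eps/2} \;=\; 2 - C'M^{-\eps/2},
\]
so picking $M$ sufficiently large (depending only on $\eps, C$) forces $\E Z \ge 1$. Since $Z \le M$, we also have $\E Z^2 \le M\, \E Z \le 2M$. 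Now Paley--Zygmund applied to the nonnegative bounded variable $Z$ gives
\[
\P\bigl(Z \ge \tfrac12 \E Z\bigr) \;\ge\; \frac{(\tfrac12 \E Z)^2}{\E Z^2} \;\ge\; \frac{1}{8M},
\]
which in particular yields $\P(|\eta| \ge 1/\sqrt{2}) \ge 1/(8M)$.

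Combining the two bounds, choose $A \ge \sqrt{2}$ large enough that $C'/A^{2+\eps} \le 1/(16M)$; then
\[
\P\bigl(A^{-1} \le |\eta| \le A\bigr) \;\ge\; \P(|\eta|\ge 1/\sqrt 2) - \P(|\eta|>A) \;\ge\; \frac{1}{16M},
\]
and enlarging $A$ further to exceed $16M$ gives the desired conclusion $\P(A^{-1}\le|\xi-\xi'|\le A) \ge A^{-1}$. The main (mild) obstacle is precisely the absence of a fourth moment, circumvented by the truncation above; everything else is Markov and Paley--Zygmund.
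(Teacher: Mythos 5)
Your proof is correct, and it takes a genuinely different route from the paper's. The paper argues by contradiction: it first conditions on $\xi'$ to reduce the failure of the lower bound to the statement that $\xi$ concentrates near a single point $z_0$, uses the mean-zero hypothesis (via Cauchy--Schwarz) to force $z_0 = O(A^{-1/2})$, and then applies H\"older's inequality against the $(2+\eps)$-moment to contradict the unit-variance normalization. You instead symmetrize immediately, working with $\eta = \xi - \xi'$ as a single variable, and run a direct truncated-second-moment argument: the truncation bound $(|\eta|^2-M)_+ \le |\eta|^{2+\eps}/M^{\eps/2}$ plays exactly the role that H\"older plays in the paper (both are just uniform integrability of $|\eta|^2$ supplied by the $(2+\eps)$-moment), and then the genuine Paley--Zygmund inequality finishes. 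Your version has two small advantages: it actually invokes the Paley--Zygmund inequality that the lemma is named after (the paper's proof does not), and it never uses the mean-zero hypothesis on $\xi$, since $\E|\xi-\xi'|^2 = 2$ already follows from variance one alone; it also yields explicit constants rather than a proof by contradiction. All the individual steps check out: the moment bound on $\eta$ by the triangle inequality in $L^{2+\eps}$, the estimate $\E Z^2 \le M \E Z \le 2M$ from $Z \le M$, the deduction $|\eta| \ge 1/\sqrt{2}$ from $Z \ge \tfrac12 \E Z \ge \tfrac12$, and the observation that enlarging $A$ at the end only grows the target annulus.
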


\begin{proof}  Let $A$ be sufficiently large depending on $\eps,C,\delta$.  From Chebyshev's inequality, we see that 
$$ \P( |\xi| \leq A/2 ), \P( |\xi'| \leq A/2 ) \geq 1-4/A^2$$
and hence by the triangle inequality
$$ \P( |\xi-\xi'| \leq A ) \geq 1-8/A^2.$$
It thus suffices (for $A$ large enough) to show that
$$ \P( |\xi-\xi'| \geq A^{-1} ) \geq A^{-1}.$$
Suppose this were not the case, then
$$ \P( |\xi-\xi'| \leq A^{-1} ) \geq 1-A^{-1}.$$
By  conditioning on $\xi'$, there thus exists a complex number $z_0$ such that
$$ \P( |\xi-z_0| \leq A^{-1} ) \geq 1-A^{-1}.$$
From Cauchy-Schwarz one has
$$ \E \xi \le  \P( |\xi-z_0| \leq A^{-1} ) (z_0 + O(A^{-1})) + \P( |\xi-z_0| > A^{-1} )^{1/2} (\E |\xi|^2)^{1/2};$$
since $\xi$ has mean zero and variance one, we conclude that
$$ z_0 = O(A^{-1/2})$$
and thus
$$ \P( |\xi| \leq C_0 A^{-1/2} ) \geq 1-A^{-1}$$
for some absolute constant $C_0 > 0$. From H\"older's inequality, we thus have
$$ \E |\xi|^2 \ll A^{-1} + \P( |\xi| \geq C_0 A^{-1/2} )^{\eps/(2+\eps)} (\E |\xi|^{2+\eps})^{2/(2+\eps)};$$
since $\xi$ has variance one and second moment bounded by $C$, we conclude that
$$ 1 \ll A^{-1} + A^{-\eps/(2+\eps)}$$
which leads to a contradiction if $A$ is large enough.
\end{proof}

Using this lemma, we can obtain the following result of ``Littlewood-Offord'' type.

\begin{lemma}[Small ball probability for lacunary steps]\label{laclac}  Let $v_1,\ldots,v_n$ be complex numbers, and suppose there is 
a subsequence $v_{i_1}, v_{i_2},\ldots,v_{i_m}$ with the property that
$$ |v_{i_j}| \geq 2 |v_{i_{j+1}}|$$
for all $j=1,\ldots,m-1$.  
Let $\xi_1,\ldots,\xi_n$ be iid complex random variables whose common distribution $\xi$ has mean zero and variance one, and obeys the bound $\E |\xi|^{2+\eps} \leq C$ for some $\eps,C>0$.
Then one has the non-concentration inequality
$$ \sup_{z \in \C} \P( |\xi_1 v_1 + \dots + \xi_n v_n - z| \leq |v_{i_m}| ) \leq C' \exp( - cm)$$
for some $C', c>0$ depending only on $\eps,C$.
\end{lemma}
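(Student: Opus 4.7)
I would start with a symmetrization step combined with the Esseen concentration inequality, aiming to exploit the lacunary structure of the frequencies $v_{i_j}$ to extract an exponential-in-$m$ decay from the Fourier integral.

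Let $\xi'_1,\ldots,\xi'_n$ be iid copies of $\xi$ independent of $\xi_1,\ldots,\xi_n$, and set $\eta_k := \xi_k - \xi'_k$. Cauchy--Schwarz gives
\[
\sup_z \P\!\Bigl(\Bigl|\textstyle\sum_k \xi_k v_k - z\Bigr| \le |v_{i_m}|\Bigr)^2 \;\le\; \P\!\Bigl(\Bigl|\textstyle\sum_k \eta_k v_k\Bigr| \le 2|v_{i_m}|\Bigr),
\]
and conditioning on the $\eta_k$ for $k \notin \{i_1,\ldots,i_m\}$ reduces the task to showing
\[
\sup_w \P\!\bigl(|T - w| \le 2|v_{i_m}|\bigr) \le C'' e^{-2cm}, \qquad T := \sum_{j=1}^m \eta_{i_j} v_{i_j},
\]
where each $\eta_{i_j}$ is iid symmetric and inherits the Paley--Zygmund anti-concentration $\P(A^{-1} \le |\eta_{i_j}| \le A) \ge A^{-1}$ via Lemma~\ref{lam}, for some $A = A(\eps, C)$.

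Next I would apply the two-dimensional Esseen concentration inequality
\[
\sup_w \P(|T - w| \le r) \lesssim r^2 \int_{|t| \le 1/r} |\phi_T(t)|\, dt, \qquad \phi_T(t) = \prod_{j=1}^m \phi_\eta(v_{i_j} t),
\]
with $r := 2|v_{i_m}|$. Symmetry of $\eta_{i_j}$ together with the Paley--Zygmund bound yields two elementary estimates on the single factor: a Gaussian-type small-$s$ bound $|\phi_\eta(s)| \le \exp(-c_1|s|^2)$ for $|s| \le s_0$ (from the variance lower bound implicit in the anti-concentration), and a sub-unit bound $|\phi_\eta(s)| \le 1 - c_2$ on some fixed intermediate annulus $|s| \sim s_0$. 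Partitioning the integration region $|t| \le 1/r$ into dyadic rings aligned with the lacunary scales $|v_{i_j}|^{-1}$ and combining these estimates with the geometric growth of the $v_{i_j}$, the goal is the Fourier bound $\int |\phi_T(t)|\, dt \lesssim e^{-cm}/|v_{i_m}|^2$, which when plugged into Esseen and combined with the symmetrization square root yields the desired $C' e^{-cm}$.

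The main obstacle will be extracting an honest exponential-in-$m$ factor from the Fourier integral. A naive pointwise estimate on $\prod_j |\phi_\eta(v_{i_j} t)|$ at each fixed $t$ gives only a constant, since by lacunarity at most $O(1)$ of the factors can fall into the non-trivial intermediate regime per value of $t$; the exponential savings must instead be extracted by combining the contributions across the $m$ dyadic scales of $t$ and exploiting the averaged oscillation of the large-frequency factors $\phi_\eta(v_{i_j} t)$ with $|v_{i_j} t| \gg 1$. The model Bernoulli case with $v_{i_j} = 2^{-j}$ gives this cancellation explicitly via the identity $\prod_{j=1}^m \cos(t/2^j) = \sin t / (2^m \sin(t/2^m))$, which integrated over the Esseen range yields only a polynomial-in-$m$ factor whose product with $r^2$ is exponentially small. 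In the general case, this identity must be replaced by a multi-scale Fourier estimate driven only by the moment and Paley--Zygmund hypotheses on $\eta$, with a possible pigeonhole-over-directions preprocessing to handle degeneracies of the projection of $\eta$ in the real-coefficient case.
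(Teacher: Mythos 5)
Your symmetrization and Esseen setup is fine, but the proof has a genuine gap at exactly the point you flag: the exponential-in-$m$ decay of the Fourier integral is never actually established, and it is the entire content of the lemma. As you observe, a pointwise bound on $\prod_{j=1}^m |\phi_\eta(v_{i_j}t)|$ gives only a constant, because by lacunarity only $O(1)$ of the arguments $|v_{i_j}t|$ land in the intermediate range where $|\phi_\eta|\leq 1-c_2$, the small arguments contribute a bounded total, and the large arguments contribute nothing at all: under the hypotheses ($2+\eps$ moments plus the Paley--Zygmund bound from Lemma~\ref{lam}), $|\phi_\eta(s)|$ can return to $1$ for arbitrarily large $s$ (e.g.\ for lattice-valued $\xi$, $\phi_\eta$ is periodic), so there is no decay to harvest from the high-frequency factors individually. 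The proposed rescue --- ``combining the contributions across the $m$ dyadic scales'' and ``exploiting the averaged oscillation'' --- is precisely the missing argument. Your model computation $\prod_{j}\cos(t/2^j)=\sin t/(2^m\sin(t/2^m))$ works only because of an exact trigonometric identity with no analogue for a general atom distribution; naive substitutes fail (e.g.\ H\"older across the $m$ factors combined with Wiener-type bounds on $\frac{1}{2T}\int_{-T}^{T}|\phi_\eta|^2$ only recovers a constant, not $e^{-cm}$). The ``pigeonhole-over-directions'' needed when the $v_{i_j}$ and $\eta$ conspire to make $T$ effectively one-dimensional is likewise asserted rather than carried out. So the proof as written does not close.

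For comparison, the paper avoids Fourier analysis entirely. It resamples via independent copies $\xi_i'$ and Bernoulli swap signs $\epsilon_i$, uses Lemma~\ref{lam} plus Chernoff to find, with probability $1-O(e^{-cm})$, a set $J$ of $\gg m$ lacunary indices at which $A^{-1}\leq|\xi_{i}-\xi_{i}'|\leq A$, and first refines the subsequence so that consecutive ratios are at least $4A^2$. Conditioning on everything except the swap signs indexed by $J$, flipping any nonempty subset of those signs moves the sum by more than $2|v_{i_m}|$, since the largest flipped term dominates the sum of all smaller ones; hence at most one of the $2^{|J|}$ sign patterns can land in the target ball, giving the bound $2^{-|J|}=O(e^{-cm})$ directly. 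If you want to salvage your route, you would need a genuine multi-scale estimate on $\int\prod_j|\phi_\eta(v_{i_j}t)|\,dt$ for lacunary frequencies; the conditioning argument is substantially shorter and uses only the triangle inequality.
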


\begin{proof}   In order to set up a conditioning argument later, we will introduce some additional sources of randomness.  Let $\xi'_1,\ldots,\xi'_n$ be independent copies of $\xi_1,\ldots,\xi_n$, let $\epsilon_1,\ldots,\epsilon_n \in \{-1,1\}$ be independent Bernoulli variables (independent of both $\xi_i$ and $\xi'_i$, and let $\tilde \xi_i$ be the random variable that equals $\xi_i$ when $\epsilon_i = +1$ and $\xi'_i$ when $\epsilon_i = -1$.  Then $\tilde \xi_1,\ldots,\tilde \xi_n$ has the same joint distribution as $\xi_1,\ldots,\xi_n$, so it suffices to obtain the bound
$$ \sup_{z \in \C} \P( |\tilde \xi_1 v_1 + \dots + \tilde \xi_n v_n - z| \leq |v_{i_m}| ) \leq C' \exp( - cm)$$

Next, let $\xi'$ be an independent copy of $\xi$.  By Lemma \ref{lam} we may find $A > 1$ depending only on $\eps,C$ such that
$$ \P( A^{-1} < |\xi-\xi'| < A ) > A^{-1}.$$
In particular
\begin{equation}\label{cheo}
\P( A^{-1} < |\xi_i-\xi'_i| < A ) > A^{-1}
\end{equation}
for all $1 \leq i \leq n$.

Next, we may refine the sequence $i_1,\ldots,i_m$ to a subsequence $\tilde i_1,\ldots,\tilde i_{\tilde m}$ with
$$ \tilde m \gg m - O(1)$$
and
\begin{equation}\label{aaa0}
 |v_{\tilde i_j}| \geq 4A^2 |v_{\tilde i_{j+1}}|
\end{equation}
and
\begin{equation}\label{bbb}
 |v_{\tilde i_{\tilde m}}| \geq 4A |v_{i_m}|.
\end{equation}

Let $J \subset \{1,\ldots,\tilde m\}$ be the set of indices $j$ for which
\begin{equation}\label{sss}
 A   \ge |\xi_{\tilde i_j} -\xi'_{\tilde i_j} | \ge A^{-1}.
\end{equation}
From \eqref{cheo} and the Chernoff (or Hoeffding) inequality, one has
\begin{equation}\label{j-big}
 |J| \geq c \tilde m
\end{equation}
with probability at least $1-O(\exp(-c'\tilde m)) = 1-O(\exp(-c''m))$ for some quantities $c,c',c''>0$ depending only on $A$, where implied constants in the $O()$ notation may depend on $A$.

We now  on the event that \eqref{j-big} occurs (we even fix all values of $\xi, \xi'$), and then further fix the signs $\epsilon_i$ for $i \not \in J$. After this conditioning,  the only remaining source of randomness comes from the signs $\epsilon_{\tilde i_j}$ for $j \in J$.  We also fix the complex number $z$.  Observe from \eqref{sss} that each reversal of a sign $\epsilon_{\tilde i_j}$ alters the sum $\tilde \xi_1 v_1 + \dots + \tilde \xi_n v_n - z$ by a quantity of magnitude between $\eps |v_{\tilde i_j}|$ and $A |v_{\tilde i_j}|$.  Using \eqref{aaa0}, \eqref{bbb} and the triangle inequality, we conclude that if we modify a non-zero number of signs $\epsilon_{\tilde i_j}$ for $j \in J$, then the above sum is altered by more than $2|v_{i_m}|$.  In particular, of the $2^{|J|}$ possible choices of these signs, at most one of them can lead to the sum having magnitude bounded by $|v_{i_m}|$.  This gives an upper bound of $2^{-|J|} = O( \exp(-c''' m))$ for this event for some $c'''>0$ depending only on $A$, and the claim follows.
\end{proof}

We are now ready to prove Lemma \ref{lemma:concentration-general}.

\begin{proof} [Proof of Lemma \ref{lemma:concentration-general}]
By Markov's inequality (or Chebyshev's inequality), we have with overwhelming probability 
 $$|f(z)| \le V(z)^{1/2} \exp( \log^2 n) = V(z)^{1/2} \exp( n^{o(1)}). $$
Thus with overwhelming probability we have the upper bound

\begin{equation} \label{eqn:upper} \log |f(z)| \le \frac{1}{2} \log V(z) + n^{o(1)}. \end{equation}
Meanwhile, the lower bound
\begin{equation} \label{eqn: lower} \log |f(z)| \ge \frac{1}{2} \log V(z) - n^{o(1)} , \end{equation}
with overwhelming probability is immediate from Lemma \ref{laclac}.
\end{proof}

\section{Assumption verification: Proof of Theorem \ref{tmp-general} }\label{log-sec}.

In this section we establish Theorem \ref{tmp-general}.  We begin by proving a variant of Theorem \ref{tmp-general} in which the logarithms in \eqref{geo} are removed:

\begin{proposition}\label{nolog}  Let $\xi, \tilde \xi, \eps, M, n, f_{n,\xi}, f_{n,\tilde \xi}, c_0,\ldots,c_n,k, z_1,\ldots,z_k, \alpha_0, \alpha_1, V$ be as in Theorem \ref{tmp-general}.  Assume that $\alpha_0$ is sufficiently small depending on $\alpha_1, \eps$, and that $V(z_1),\ldots,V(z_k) > 0$.
Then for smooth function $H: \C^k \to \C$ obeying the derivative bounds
\begin{equation}\label{nabah}
 |\nabla^a H(\zeta_1,\ldots,\zeta_k)| \ll n^{\alpha_0}, 0 \le a \le 3,
\end{equation}
\noindent we have 
\begin{equation}\label{Variant}
\begin{split}
& |\E H(V(z_1) ^{-1/2} f_n(z_1), \ldots, V(z_k) ^{-1/2} f_n(z_k)) \\
&\quad - H(V(z_1) ^{-1/2} \tilde f_n(z_1), \ldots, V(z_k) ^{-1/2} \tilde f_n(z_k))| \ll n^{-\alpha_0 },
\end{split}
\end{equation} 
where the implied constants depend on $\eps, M, \alpha_0, \alpha_1$.
\end{proposition}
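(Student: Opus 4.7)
The plan is to establish \eqref{Variant} via the Lindeberg replacement trick, in the spirit of the arguments used for characteristic polynomials in \cite{tv-iid}. Concretely, I would interpolate between $f_{n,\xi}$ and $f_{n,\tilde\xi}$ by swapping the coefficients one index at a time, producing hybrid random polynomials $f^{(0)} = f_{n,\xi}, f^{(1)}, \ldots, f^{(n+1)} = f_{n,\tilde\xi}$. Writing $W^{(m)} := (V(z_j)^{-1/2} f^{(m)}(z_j))_{j=1}^k$, the telescoping identity reduces the task to estimating each single-swap difference $|\E H(W^{(m)}) - \E H(W^{(m+1)})|$ and summing the $n+1$ contributions.

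For the swap at index $i$, I would isolate the $i$-th term and write $W^{(i)} = Y_i + \xi_i X_i$ and $W^{(i+1)} = Y_i + \tilde\xi_i X_i$, where
\[
X_i := \bigl(V(z_j)^{-1/2} c_i z_j^i\bigr)_{j=1}^k \in \C^k
\]
and $Y_i \in \C^k$ is the normalized hybrid sum with the $i$-th term removed, independent of $\xi_i$ and $\tilde\xi_i$. Two structural facts follow from the hypotheses: the delocalization bound \eqref{maxbound} yields $|X_i^{(j)}| \leq n^{-\alpha_1}$, hence $|X_i| \leq \sqrt{k}\, n^{-\alpha_1}$; and the definition \eqref{V-def} of $V$ gives the $\ell^2$ identity $\sum_{i=0}^n |X_i|^2 = k$. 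Conditioning on $Y_i$ and Taylor expanding $H(Y_i + \xi_i X_i)$ as a smooth function of $(\Re\xi_i,\Im\xi_i) \in \R^2$ up to second order, with remainder $R_3$ satisfying the third-order bound $|R_3(\xi_i)| \leq C n^{\alpha_0} |X_i|^3 |\xi_i|^3$, the zeroth order contribution is identical for both distributions, while the first and second order terms are polynomials of total real degree $\leq 2$ in $(\Re\xi_i,\Im\xi_i)$ whose coefficients depend only on $Y_i$, $X_i$, and derivatives of $H$. Their expectations therefore coincide for $\xi_i$ and $\tilde\xi_i$, thanks to the matching of real moments to second order, and the single-swap difference reduces to $\E R_3(\xi_i) - \E R_3(\tilde\xi_i)$.

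The main technical issue is that $\E|\xi_i|^3$ need not be finite under the hypothesis $\E|\xi|^{2+\eps} \leq M$. To handle this I would truncate at level $T := n^{\alpha_1}$. On $\{|\xi_i| \leq T\}$ I use the third-order Taylor bound together with
\[
\E|\xi_i|^3 1_{|\xi_i|\leq T} \leq T^{1-\eps} \E|\xi_i|^{2+\eps} \leq M T^{1-\eps};
\]
on the complementary event I use the globally valid second-order bound $|R_3(\xi_i)| \leq C' n^{\alpha_0} |X_i|^2 |\xi_i|^2$ (obtained by combining the second-order Taylor inequality for $H - H(Y_i) - P_1(\xi_i)$ with the pointwise bound $|P_2(\xi_i)| \leq n^{\alpha_0}|X_i|^2|\xi_i|^2$), together with $\E|\xi_i|^2 1_{|\xi_i|>T} \leq M T^{-\eps}$. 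This produces a per-swap bound of order $n^{\alpha_0}\bigl(|X_i|^3 T^{1-\eps} + |X_i|^2 T^{-\eps}\bigr)$. Summing over $i$ using $\sum_i |X_i|^3 \leq \sqrt{k}\, n^{-\alpha_1} \sum_i |X_i|^2 = k^{3/2} n^{-\alpha_1}$, $\sum_i |X_i|^2 = k \leq n^{\alpha_0}$, and the choice $T = n^{\alpha_1}$, both contributions collapse to $O(n^{O(\alpha_0) - \alpha_1 \eps})$, which is $\ll n^{-\alpha_0}$ as soon as $\alpha_0$ is sufficiently small in terms of $\alpha_1$ and $\eps$.

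The primary obstacle, to my mind, is bookkeeping the complex nature of $\xi_i$ correctly: both the Taylor expansion and the matching-moments condition live in the real variables $(\Re\xi_i,\Im\xi_i)$, and one must be careful to verify that every polynomial appearing in the first- and second-order expansion has coefficients depending only on $Y_i, X_i$, and derivatives of $H$, so that the matching hypothesis genuinely kills those terms in expectation. The secondary issue is calibrating the truncation threshold: the choice $T = n^{\alpha_1}$ is forced by balancing the interior third-order Taylor term against the tail, and smaller or larger values of $T$ fail. Once these are in place, the remainder is routine bookkeeping.
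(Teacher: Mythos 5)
Your proposal is correct and follows essentially the same route as the paper: Lindeberg swapping one coefficient at a time, conditioning, Taylor expansion in $(\Re\xi_i,\Im\xi_i)$ to second order, and cancellation of the low-order terms via moment matching, with the final summation using $\sum_i |X_i|^2 = k$ and $\sup_i |X_i| \ll k^{1/2} n^{-\alpha_1}$. The only difference is cosmetic: where you truncate at $T = n^{\alpha_1}$ and combine a third-order bound on the bulk with a second-order bound on the tail, the paper interpolates the two pointwise bounds $\min(a^2|\xi|^2, a^3|\xi|^3) \le a^{2+\eps}|\xi|^{2+\eps}$ to get a single $(2+\eps)$-order remainder — the two devices are equivalent and yield the same $n^{O(\alpha_0)-\alpha_1\eps}$ bound.
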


\begin{proof}  We use the Lindeberg swapping argument.  Let $\xi_0,\ldots,\xi_n$ be iid copies of $\xi$, and $\tilde \xi_0,\ldots,\tilde \xi_n$ be iid copies of $\tilde \xi$ that are independent of $\xi_0,\ldots,\xi_n$.  We introduce the intermediate polynomials
$$ f_{n,i_0}(z) :=  \sum_{0 \leq i < i_0} c_i \tilde \xi_i z^i + \sum_{i_0 \leq i \leq n} c_i \xi_i  z^i$$
for $0 \leq i_0 \leq n+1 $, and the random variables
$$ Y_{j,i_0} := V(z_j)^{-1/2} f_{n,i_0}(z_j)$$
for $0 \leq i_0 \leq n+1$ and $1 \leq j \leq k$.
We can then write \eqref{Variant} as
$$ |\E H(Y_{1,0}, \ldots, Y_{k,0} ) - H( Y_{1,n+1}, \ldots, Y_{k,n+1})| \ll n^{-\alpha_0 },
$$
and so by telescoping series it will suffice to show that
\begin{equation}\label{dance}
 \sum_{i_0=1}^n |\E H(Y_{1,i_0}, \ldots, Y_{k,i_0} ) - H( Y_{1,i_0+1}, \ldots, Y_{k,i_0+1})| \ll n^{-\alpha_0 }.
\end{equation}

Fix $i_0$.  We can then write
$$ f_{n,i_0}(z) = \hat f_{n,i_0}(z) + c_{i_0} \xi_{i_0} z^{i_0}$$
and
$$ f_{n,i_0+1}(z) = \hat f_{n,i_0}(z) + c_{i_0} \tilde \xi_{i_0} z^{i_0}$$
for any $z$, where 
$$ \hat f_{n, i_0} (z) :=  \sum_{0 \leq i < i_0} c_i \tilde \xi_i z^i + \sum_{i_0 < i \leq n} c_i \xi_i  z^i.$$
In particular we have
$$ Y_{j,i_0} = \tilde Y_{j,i_0} + a_{j,i_0} \xi_{i_0}$$
and
$$ Y_{j,i_0+1} = \tilde Y_{j,i_0} + a_{j,i_0} \tilde \xi_{i_0}$$
where
$$ \tilde Y_{j,i_0} := V(z_j)^{-1/2} \hat f_{n,i_0}(z_j)$$
and
\begin{equation}\label{adefi}
 a_{j,i_0} := \frac{c_{i_0} z_j^{i_0}}{V(z_j)^{1/2}}.
 \end{equation}

Now let us condition all the $\xi_i,\tilde \xi_i$ for $i \neq i_0$ to be fixed, leaving only $\xi_{i_0}$ and $\tilde \xi_{i_0}$ as sources of randomness; in particular, the $\tilde Y_{j,i_0}$ are now deterministic.  We consider the conditional expectation
$$ |\E_{\xi_{i_0}, \tilde \xi_{i_0}} H(Y_{1,i_0}, \ldots, Y_{k,i_0} ) - H( Y_{1,i_0+1}, \ldots, Y_{k,i_0+1})|.$$
We can write
$$
H(Y_{1,i_0}, \ldots, Y_{k,i_0}) = H(\tilde Y_{1,i_0} + a_{1,i_0} \xi_{i_0}, \ldots, \tilde Y_{k,i_0} + a_{k,i_0} \xi_{i_0}).$$
From \eqref{nabah}, the bound $k \leq n^{\alpha_0}$, and Taylor expansion with remainder, we have
$$
H(Y_{1,i_0}, \ldots, Y_{k,i_0}) = H_{0,0} + H_{1,0} \Re \xi_{i_0} + H_{0,1} \Im \xi_{i_0} + O( a_{i_0}^2 n^{4\alpha_0} |\xi_{i_0}|^2 )$$
and
\begin{equation}\label{ttt}
\begin{split}
H(Y_{1,i_0}, \ldots, Y_{k,i_0}) &= H_{0,0} + H_{1,0} \Re \xi_{i_0} + H_{0,1} \Im \xi_{i_0} \\
&\quad + 
H_{2,0} (\Re \xi_{i_0})^2 + H_{1,1} \Re \xi_{i_0} \Im \xi_{i_0} + H_{0,2} (\Im \xi_{i_0})^2 
+ O( a_{i_0}^3 n^{4\alpha_0} |\xi_{i_0}|^3 )
\end{split}
\end{equation}
(say), where
$$ H_{r,s} := \frac{1}{r! s!}\frac{\partial^{r+s}}{(\partial x)^r (\partial y)^s}
H(\tilde Y_{1,i_0} + a_{1,i_0} \xi_{i_0}, \ldots, \tilde Y_{k,i_0} + a_{k,i_0} (x+\sqrt{-1}y))
|_{x=y= 0}$$
and 
\begin{equation}\label{adefa}
a_{i_0} := \left(\sum_{j=1}^k |a_{j,i_0}|^2\right)^{1/2}.
\end{equation}
One can verify that
$$ H_{2,0}, H_{1,1}, H_{0,2} = O( a_{i_0}^2 n^{4\alpha_0} |\xi_{i_0}|^2 )$$
and so the error term in \eqref{ttt} is both $O( a_{i_0} ^2 n^{O(\alpha_0)} |\xi_{i_0}|^2 )$ and $O( a_{i_0} ^3 n^{O(\alpha_0)} |\xi_{i_0}|^3 )$.  Interpolating, we see that
\begin{align*}
H(Y_{1,i_0}, \ldots, Y_{k,i_0}) &= H_{0,0} + H_{1,0} \Re \xi_{i_0} + H_{0,1} \Im \xi_{i_0} + 
H_{2,0} (\Re \xi_{i_0})^2 + H_{1,1} \Re \xi_{i_0} \Im \xi_{i_0} \\
&\quad + H_{0,2} (\Im \xi_{i_0})^2 
+ O( a_{i_0}^{2+\eps} n^{4\alpha_0} |\xi_{i_0}|^{2+\eps} ).
\end{align*}
Similarly for $H(Y_{1,i_0+1}, \ldots, Y_{k,i_0+1})$ and $\tilde \xi_{i_0}$.
Taking expectations in $\xi_{i_0}, \tilde \xi_{i_0}$ and using the bounded moment assumption, and the fact that $\xi, \tilde \xi$ match moments to second order, we conclude that
$$ |\E_{\xi_{i_0}, \tilde \xi_{i_0}} H(Y_{1,i_0}, \ldots, Y_{k,i_0} ) - H( Y_{1,i_0+1}, \ldots, Y_{k,i_0+1})| \ll a_{i_0}^{2+\eps} n^{4\alpha_0}.$$
Integrating out all the other variables, we see that we may bound the left-hand side of \eqref{dance} by
$$ n^{4\alpha_0} \sum_{i_0=1}^n a_{i_0}^{2+\eps}.$$
From \eqref{V-def}, \eqref{adefi}, \eqref{adefa} we have
$$ \sum_{i_0=1}^n a_{i_0}^2 = k \leq n^{\alpha_0}$$
and from \eqref{maxbound} we have
$$ \sup_{ 1\leq i_0 \leq n} a_{i_0} \leq k n^{-\alpha_1}$$
and the claim \eqref{dance} now follows if $\alpha_0$ is sufficiently small depending on $\eps,\alpha_1$.
\end{proof}

Now we can reinstate the logarithms and complete the proof of Theorem \ref{tmp-general}.  Let the notation and hypotheses be as in that theorem.  If one of the $V(z_j)$ vanishes then $f(z_j)$ and $\tilde f(z_j)$ are almost surely zero and the claim is vacuously true thanks to our conventions, so we may assume that $V(z_j)>0$ for all $j$.   

As the conclusions of the theorem are transitive in $\xi,\tilde \xi$, we may assume without loss of generality that one of these distributions, say $\tilde \xi$, has a gaussian distribution (whose real and imaginary part have the same covariance matrix as that of $\xi$, in particular having mean zero and variance one)

Using a smooth partition of unity, we can split $G = G_1 + G_2$, where $G_1$ is supported on those $\zeta_1,\ldots,\zeta_k$ with $\inf_{1 \leq i \leq k} \zeta_i \leq - 50 \alpha_0 \log n$, and $G_2$ is supported on those $\zeta_1,\ldots,\zeta_k$ with $\inf_{1 \leq i \leq k} \zeta_i  \geq -50 \alpha_0 \log n - 1$, and with the bounds
$$ |\nabla^a G_i(x_1,\dots,x_k)| \ll n^{5 \alpha_0}$$
(say) for $0 \leq a \leq 3$, $i=1,2$, and all $x_1,\dots,x_k \in \R$.  (The constants $5,10, 50, 100$ are rather arbitrary and generous.) 

We first show that the contribution coming from $G_1$ is negligible. Indeed, 

$$
 |\E G_1(\log |Y_1|, \ldots, \log |Y_k|)| \leq \E H_1(Y_1,\ldots,Y_k)$$
 for some smooth function $H_1: \C^k \to\R^+$ supported on the region $\{ (\zeta_1,\dots,\zeta_k) \in \C^k: \inf_{1 \leq i \leq k} |\zeta_i| \ll n^{-50 \alpha_0} \}$ obeying the derivative bounds \eqref{nabah} (but with $\alpha_0$ replaced by a constant multiple of itself).  By Proposition \ref{nolog} (and reducing $\alpha_0$ as necessary), we have
$$ \E H_1(Y_1,\ldots,Y_k) \leq \E H_1(\tilde Y_1,\ldots,\tilde Y_k) + O( n^{- \alpha_0} ).$$

But as the $\tilde Y_1,\ldots,\tilde Y_k$ are independent gaussian with mean zero and variance one, the support of $H_1$ has measure 
$O(kn^{-50 \alpha_0}) =O(n^{-49 \alpha_0})$ with respect to the product gaussian measure (regardless of the structure of the covariance matrix).  Furthermore, by assumption  $|H_1| \le n^{10 \alpha_0}$. This implies

$$  |\E G_1(\log |Y_1|, \ldots, \log |Y_k|)| = O(n^{-39 \alpha_0}) = o( n^{-\alpha_0}).$$

With $\tilde Y_i$, we can argue similarly, and without using Proposition \ref{nolog}. 
  
To conclude the proof, it suffices to show that
 $$
 \E G_2(\log |Y_1|, \ldots, \log |Y_k|) - G_2(\log |\tilde Y_1|, \ldots, \log |\tilde Y_k|) = O(n^{-\alpha_0}).
$$
We can rewrite this as
$$
 \E H_2(Y_1, \ldots, Y_k) - H_2(\tilde Y_1, \ldots, \tilde Y_k) = O(n^{-\alpha_0}),
$$
where
$$ H_2(\zeta_1,\ldots,\zeta_k) := G_2(\log |\zeta_1|, \ldots, \log |\zeta_k| ).$$
From the derivative and support hypotheses on $G_2$, we see from the chain rule that $H_2$ obeys the derivative bounds \eqref{nabah} (but with $\alpha_0$ replaced by a constant multiple of itself), and the claim now follows from Proposition \ref{nolog} (again reducing $\alpha_0$ as necessary).

\section{Assumption verification: repulsion bounds} \label{repulse-sec}

In this section we prove Lemma \ref{lemma:RB1}.  Let $n, f, c_0,\ldots,c_n,R,v,x,\delta$ be as in that lemma.  Our primary tool will be the following (vector-valued) version of the well-known Kac-Rice formula from \cite{kac-0}, \cite{kac}, \cite{rice}:

\begin{lemma}[Kac-Rice formula]\label{krf}  Let $f$ be as above.  Let $k,l,n \geq 0$ be integers with $k+2l \leq n$.  Let $x_1,\ldots,x_k \in \R$ be distinct real numbers, and $z_1,\ldots,z_l \in \C_+$ be distinct complex numbers in the upper half-plane.    Then we have
\begin{align*}
& \rho^{(k,l)}(x_1,\ldots,x_k,z_1,\ldots,z_l)  = \\ & p_{\R^k \times \C^l}\left( (f(x_1),\dots,f(x_k),f(z_1),\dots,f(z_l)) = (0,\dots,0)\right) \\
&\quad \times \E\left( |f'(x_1)| \dots |f'(x_k)| |f'(z_1)|^2 \dots |f'(z_l)|^2 | (f(x_1),\dots,f(x_k),f(z_1),\dots,f(z_l)) = (0,\dots,0)\right ).
\end{align*}
where $p_{\R^k \times \C^l}( (f(x_1),\dots,f(x_k),f(z_1),\dots,f(z_l)) = (0,\dots,0))$ denotes the probability density function of the random variable 
$(f(x_1),\dots,f(x_k),f(z_1),\dots,f(z_l))$ (viewed as taking values in $\R^k \times \C^l$) at the origin $(0,\dots,0$).
\end{lemma}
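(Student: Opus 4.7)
The plan is to adapt the classical Kac-Rice derivation to the mixed real/complex zero setup. As a first step, I would verify that the joint distribution of $(f(x_1),\ldots,f(x_k),f(z_1),\ldots,f(z_l))$ on $\R^k \times \C^l$ is absolutely continuous: since $f$ is a real-linear combination of iid real Gaussians, this reduces to the real-linear independence of the functionals $\xi \mapsto f(x_i)$ and $\xi \mapsto \Re f(z_j), \Im f(z_j)$ as vectors in $\R^{n+1}$, which is a generic condition that uses the distinctness of $x_1,\ldots,x_k, z_1,\ldots,z_l, \overline{z_1},\ldots,\overline{z_l}$ together with the dimension bound $k+2l \leq n$. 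Let $\psi_\varepsilon$ (resp.\ $\Psi_\varepsilon$) be a smooth nonnegative approximation to the 1D (resp.\ 2D) Dirac delta at the origin, of total mass one. Plugging the product test function $\prod_p \psi_\varepsilon(y_p - x_p) \prod_q \Psi_\varepsilon(w_q - z_q)$ into \eqref{cord1} and letting $\varepsilon \to 0$ shows that $\rho^{(k,l)}$ at the distinct point $(x_1,\ldots,x_k,z_1,\ldots,z_l)$ equals
\[
\lim_{\varepsilon \to 0} \E \sum_{\substack{i_1,\ldots,i_k\\ \text{distinct}}} \sum_{\substack{j_1,\ldots,j_l\\ \text{distinct}}} \prod_{p=1}^k \psi_\varepsilon(\zeta_{i_p,\R} - x_p) \prod_{q=1}^l \Psi_\varepsilon(\zeta_{j_q,\C_+} - z_q).
\]

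Next, I would exploit that for Gaussian random polynomials the zero set is almost surely simple, so that any real zero $\zeta$ of $f$ lying near some $x_p$ satisfies $f'(\zeta) \neq 0$, and similarly for each complex zero near $z_q$. The local linearization $f(\zeta+h) = f'(\zeta) h + O(h^2)$ combined with the standard change-of-variable rule for the Dirac delta yields the distributional limits
\[
\psi_\varepsilon(\zeta - x_p) \longrightarrow |f'(x_p)|\, \delta(f(x_p)), \qquad \Psi_\varepsilon(\zeta - z_q) \longrightarrow |f'(z_q)|^2\, \delta^{(2)}(f(z_q)),
\]
where the factor $|f'(z_q)|^2$ arises as the real Jacobian determinant of the $\C$-linear map $w \mapsto f'(z_q) w$ viewed as a map $\R^2 \to \R^2$. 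Inserting these into the regularized sum and invoking dominated convergence (justified by the Gaussian tail bounds on $f$ and $f'$, together with the fact that generically only one index $i_p$ contributes near each $x_p$), we obtain
\[
\rho^{(k,l)}(x_1,\ldots,x_k,z_1,\ldots,z_l) = \E \Bigl[ \prod_{p=1}^k |f'(x_p)|\, \delta(f(x_p)) \prod_{q=1}^l |f'(z_q)|^2\, \delta^{(2)}(f(z_q)) \Bigr].
\]

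Finally, integrating the delta functions against the joint density of $(f(x_1),\ldots,f(z_l),f'(x_1),\ldots,f'(z_l))$ factors the expectation into a marginal density of $(f(x_1),\ldots,f(z_l))$ at the origin times a conditional expectation of the Jacobian product given that $(f(x_1),\ldots,f(z_l))=0$, which is precisely the stated formula. The main obstacle is the rigorous justification of the delta-function manipulation in the second step: one must rule out the ``off-diagonal'' contributions in which two or more zeros $\zeta_{i,\R}$ or $\zeta_{j,\C_+}$ cluster within $\varepsilon$ of a single reference point $x_p$ or $z_q$. This is handled by a level-repulsion estimate of the type supplied by Lemma \ref{lemma:RB1}, or, at this stage more directly, by noting that for distinct fixed $x_p, z_q$ the absolute continuity of the law of $(f, f')$ forces the measure of such coincident configurations to vanish as $\varepsilon \to 0$.
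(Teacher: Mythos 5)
The paper does not actually prove Lemma \ref{krf}: it is quoted as the classical Kac--Rice formula with citations to Kac and Rice, so your sketch is being compared against the standard derivations rather than against an argument in the text. Your outline is indeed the standard route: approximate deltas in \eqref{cord1}, linearize $f$ at each (a.s.\ simple) zero to pick up the Jacobian factor $|f'(x_p)|$ on $\R$ and $|f'(z_q)|^2$ on $\C$, then disintegrate against the joint Gaussian density to produce the density-at-zero times conditional-expectation form. The absolute-continuity step, via real-linear independence of the vectors $(c_i x_p^i)_i$, $(c_i \Re z_q^i)_i$, $(c_i \Im z_q^i)_i$ (a Vandermonde argument on the $k+2l$ distinct points $x_p, z_q, \overline{z_q}$ using $k+2l \leq n$, and implicitly the nonvanishing of the $c_i$), is the right observation.

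Two things need repair. First, you cannot invoke Lemma \ref{lemma:RB1} to rule out clustering of zeros near a reference point: that lemma is itself \emph{deduced} from Lemma \ref{krf} in Section \ref{repulse-sec}, so this is circular. Second, your fallback --- that absolute continuity of the law of $(f,f')$ ``forces the measure of such coincident configurations to vanish'' --- is exactly the hard step of any rigorous Kac--Rice proof, and here it is asserted rather than proved; one genuinely needs to show the factorial moment measures of the zero set put no mass on the diagonals, e.g.\ by noting that two real zeros within $\varepsilon$ of $x_p$ force $f$ and $f'$ to be simultaneously $O(\varepsilon)$-small at a nearby point (Rolle plus Taylor), an event of probability $o(\varepsilon)$ by nondegeneracy of the Gaussian vector $(f(x),f'(x))$, or by applying the area/coarea formula to the evaluation map $(t_1,\dots)\mapsto(f(t_1),\dots)$ off the diagonal. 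A smaller issue: the displayed ``distributional limits'' $\psi_\varepsilon(\zeta-x_p)\to |f'(x_p)|\,\delta(f(x_p))$ do not parse as written; the identity you want is the pathwise change of variables $\sum_{\zeta\in\R:\,f(\zeta)=0}\varphi(\zeta)=\int_\R \varphi(x)\,|f'(x)|\,\delta(f(x))\,dx$ (and its two-dimensional analogue), applied before taking expectations. With those repairs the argument is the standard one and goes through.
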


Specialising the above lemma to the cases $(k,l) = (2,0), (0,1)$ and $n \geq 2$, we see that
\begin{equation}\label{rho-2}
\begin{split}
 \rho^{(2,0)}(x,x+\delta) &= p_{\R^2}\left( (f(x), f(x+\delta)) = (0,0) \right)\\
 &\quad \times \E\left( |f'(x)| |f'(x+\delta)| | (f(x),f(x+\delta)) = (0,0) \right) \end{split} 
\end{equation}
and
\begin{equation}\label{rho-1}
\begin{split}
 \rho^{(0,1)}(x+\sqrt{-1}\delta) &= p_{\C}( f(x+\sqrt{-1}\delta) = 0 ) \\
 &\quad \times \E\left( |f'(x+\sqrt{-1}\delta)|^2 | f(x+\sqrt{-1}\delta) = 0 \right). \end{split} 
\end{equation}

Observe that   random variables such as   $$f(x), f(y), \Re f(z), \Im f(z), f'(x), f'(y), \Re f'(z), \Im f'(z)$$ can be written in the form $X \cdot v$, where $X \in \R^{n+1}$ is the random real gaussian vector $X := (\xi_0,\ldots,\xi_n)$, and $v \in \R^{n+1}$ is a deterministic vector depending on $x$, $y$, or $z$.  For computing the quantities in \eqref{rho-2}, \eqref{rho-1}, we observe the following identities:

\begin{lemma}[Gaussian identities]\label{gauss}  Let $1 \leq m \leq n$, and let $v_1,\ldots,v_m$ be linearly independent (deterministic) vectors in $\R^{n+1}$, and let $X \in \R^{n+1}$ be a random real gaussian vector.  Then
$$ p_{\R^m}( (X \cdot v_1, \ldots, X \cdot v_m)  = (0,\ldots,0) ) = (2\pi)^{-m/2} |v_1 \wedge \dots \wedge v_m|^{-1}.$$
Furthermore, if $v$ is another vector in $\R^{n+1}$, then
\begin{equation}\label{xo}
 \E( |X \cdot v|^2 | (X \cdot v_1, \ldots, X \cdot v_m) = (0,\ldots,0) ) = \operatorname{dist}(v, \operatorname{span}(v_1,\ldots,v_m))^2.
\end{equation}
and similarly
\begin{equation}\label{xoe}
 \E( |X \cdot v| | (X \cdot v_1, \ldots, X \cdot v_m) = (0,\ldots,0) ) = \sqrt{\frac{2}{\pi}} \operatorname{dist}(v, \operatorname{span}(v_1,\ldots,v_m)).
\end{equation}
\end{lemma}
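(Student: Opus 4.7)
The plan is to handle the three identities together by reducing everything to standard facts about linear images and orthogonal decompositions of a standard Gaussian vector on $\R^{n+1}$. Since the hypotheses of Lemma \ref{lemma:RB1} put $\xi$ equal to $N(0,1)_\R$, the vector $X=(\xi_0,\ldots,\xi_n)$ is a standard Gaussian in $\R^{n+1}$, so the random vector $Y := (X\cdot v_1,\ldots,X\cdot v_m)$ is itself Gaussian in $\R^m$ with mean zero and covariance matrix $\Sigma=(v_i\cdot v_j)_{1\le i,j\le m}$. Linear independence of the $v_i$ makes $\Sigma$ positive definite, so $Y$ has a bona fide density.

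For the first identity I would just read off the value of the Gaussian density at the origin, namely $(2\pi)^{-m/2}(\det\Sigma)^{-1/2}$. Writing $V$ for the $(n+1)\times m$ matrix with columns $v_1,\ldots,v_m$, the Gram determinant identity $\det\Sigma=\det(V^T V)=|v_1\wedge\cdots\wedge v_m|^2$ (which is essentially the definition of the exterior product norm used just before the lemma) converts this into $(2\pi)^{-m/2}|v_1\wedge\cdots\wedge v_m|^{-1}$, as required.

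For the conditional expectations, the clean way is to orthogonally decompose $v=v_\parallel+v_\perp$ with $v_\parallel\in W:=\operatorname{span}(v_1,\ldots,v_m)$ and $v_\perp\in W^\perp$, and to correspondingly split $X=X_W+X_{W^\perp}$, where $X_W$ and $X_{W^\perp}$ are independent standard Gaussians on $W$ and $W^\perp$ respectively. The event that all inner products $X\cdot v_i$ vanish is the event $X_W=0$, so conditioning on it leaves $X_{W^\perp}$ untouched while forcing $X\cdot v_\parallel=0$ almost surely. Consequently $X\cdot v$ conditioned on this event equals $X_{W^\perp}\cdot v_\perp$ in distribution, which is a centered real Gaussian of variance $|v_\perp|^2=\operatorname{dist}(v,W)^2$. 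Then \eqref{xo} is immediate, and \eqref{xoe} follows from the classical identity $\E|Z|=\sqrt{2/\pi}\,\sigma$ for $Z\sim N(0,\sigma^2)$.

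There is no genuine obstacle here: the only mildly technical point is making sure the conditioning on the null event $\{X_W=0\}$ is interpreted in the sense of regular conditional probabilities (which, given that $X_W$ has a continuous density and $X_{W^\perp}$ is independent of $X_W$, is unambiguous). Once that is granted, both expectation formulas collapse to one-dimensional Gaussian moment computations.
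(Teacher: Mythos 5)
Your proof is correct and follows essentially the same route as the paper: both arguments rest on the rotation invariance of the standard Gaussian and the orthogonal splitting of $X$ along $W=\operatorname{span}(v_1,\ldots,v_m)$ and $W^\perp$, reducing the conditional expectations to one-dimensional Gaussian moments. The only cosmetic difference is that you obtain the density formula directly from $\det(V^TV)=|v_1\wedge\cdots\wedge v_m|^2$ (Cauchy--Binet), whereas the paper reduces to the orthonormal case by a linear change of basis; both are fine.
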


\begin{proof}  We can assume that $v $ does not  belong to the span of $v_1, \dots, v_m$, as otherwise both sides of \eqref{xo} and \eqref{xoe} are zero. 
By applying an invertible linear transformation to the $v_1,\ldots,v_m$, we may reduce to the case when the $v_1,\ldots,v_m$ are an orthonormal system.  As the distribution of the gaussian random vector $X$ is rotation invariant, we may then assume without loss of generality that $v_1,\ldots,v_m$ are the first $m$ vectors $e_1,\ldots,e_m$ of the standard basis $e_1,\ldots,e_{n+1}$. 
 Since we may subtract any linear combination of $v_1,\ldots,v_m$ from $v$ without affecting either side of \eqref{xo}, we may assume without loss of generality that $v$ is orthogonal to $e_1,\ldots,e_m$; by rotating and rescaling we may then normalize $v=e_{m+1}$.  The claims then follow by direct computation.
\end{proof}

\subsection{Estimating $\rho^{(2,0)}(x,y)$} 

We apply this lemma to obtain the bound \eqref{rhofxy}.  By Cauchy-Schwarz, we have
\begin{align*}
\E( |f'(x)| |f'(x+\delta)| | (f(x),f(x+\delta)) = (0,0) ) &\leq \E\left( |f'(x)|^2 | (f(x),f(x+\delta)) = (0,0) \right)^{1/2} \\
&\quad \times \E\left( |f'(x+\delta)|^2 | (f(x),f(x+\delta)) = (0,0) \right)^{1/2}
\end{align*}
and hence by \eqref{rho-2} and Lemma \ref{gauss} we have
\begin{equation}\label{vowed}
\rho^{(2,0)}(x,x+\delta) \ll |\bv_x \wedge \bv_{x+\delta}|^{-1} 
\operatorname{dist}(\bw_x, \operatorname{span}(\bv_x,\bv_{x+\delta})) \operatorname{dist}(\bw_{x+\delta}, \operatorname{span}(\bv_x,\bv_{x+\delta}))
\end{equation}
where $\bv_x,\bv_{x+\delta},\bw_x,\bw_{x+\delta}$ are the  vectors
\begin{align}
\bv_x &:= (c_i x^i)_{i=0}^n \label{vx1}\\
\bv_{x+\delta} &:= (c_i (x+\delta)^i)_{i=0}^n \label{vx2}\\
\bw_x &:=  (i c_i x^{i-1} )_{i=0}^n \label{vx3}\\
\bw_{x+\delta} &:= (i c_i (x+\delta)^{i-1})_{i=0}^n.\label{vx4}
\end{align}

Note from the quotient rule and the hypotheses on $v, R$ in Lemma \ref{lemma:RB1} that
\begin{align*}
\bv_x &= \frac{v(x)}{R(x)} \\
\bv_{x+\delta} &= \frac{v(x+\delta)}{R(x+\delta)} \\
\bw_x &= \frac{1}{R(x)} v_x(x) - \frac{R'(x)}{R(x)} \bv_x \\
\bw_{x+\delta} &= \frac{1}{R(x+\delta)} v'(x+\delta) - \frac{R'(x+\delta)}{R(x+\delta)} \bv_{x+\delta}
\end{align*}
where $v'$ is the complex derivative of the holomorphic function $v$, and similarly for $R$.  One can thus write the right-hand side of \eqref{vowed} as
$$ 
|v(x) \wedge v(x+\delta)|^{-1} 
\operatorname{dist}(v'(x), \operatorname{span}(v(x),v(x+\delta))) \operatorname{dist}(v_x(x+\delta), \operatorname{span}(v(x),v(x+\delta))).$$

To obtain the desired bound \eqref{rhofxy}, it will thus suffice to establish the bounds
\begin{align}
|v(x) \wedge v(x+\delta)| &\gg \delta \label{lip-1} \\
\operatorname{dist}(v'(x), \operatorname{span}(v(x),v(x+\delta))) &\ll \delta \label{lip-2} \\
\operatorname{dist}(v'(x+\delta), \operatorname{span}(v(x),v(x+\delta))) &\ll \delta \label{lip-3}.
\end{align}

From \eqref{vo} and the Cauchy integral formula we have the bounds
\begin{equation}\label{vosk}
|\frac{d^k}{dz^k} v(z)| \ll 1
\end{equation}
for all $z \in B(x,\delta)$ and $k=0,1,2$ if $\delta$ is sufficiently small depending on $r_0$ (recall that implied constants are allowed to depend on $r_0,C$).  Using this and Taylor's theorem with remainder, we see that
$$ |v(x+\delta) - v(x) - \delta v'(x)| \ll \delta^2$$
and thus
$$ |v(x) \wedge v(x+\delta) - \delta v(x) \wedge v'(x)| \ll \delta^2$$
which together with \eqref{vos} gives \eqref{lip-1} for $\delta$ sufficiently small.  Also, from \eqref{vosk} and Taylor's theorem with remainder we have
$$ v(x+\delta) = v(x) + \delta v'(x) + O( \delta^2 )$$
and so
$$ v'(x) = \frac{1}{\delta} v(x+\delta) - \frac{1}{\delta} v(x) + O(\delta)$$
and \eqref{lip-2} follows.  A similar argument gives \eqref{lip-3}, and \eqref{rhofxy} follows.

\subsection{Estimating $\rho^{(0,1)}(z)$ } 

We now establish the bound \eqref{rhofz}.

Applying \eqref{rho-1}, splitting into real and imaginary parts, and then using Lemma \ref{gauss}, we see that
\begin{equation}\label{vowed-2}
\begin{split}
 \rho^{(0,1)}(z) &\ll |\Re \bv_{x+\sqrt{-1}\delta} \wedge \Im \bv_{x+\sqrt{-1}\delta}|^{-1} \\
 &\quad  ( \operatorname{dist}( \Re \bw_{x+\sqrt{-1}\delta}, \operatorname{span}(\Re \bv_{x+\sqrt{-1}\delta}, \Im \bv_{x+\sqrt{-1}\delta}) )^2 \\
 &\quad\quad + 
\operatorname{dist}( \Im \bw_{x+\sqrt{-1}\delta}, \operatorname{span}(\Re \bv_{x+\sqrt{-1}\delta}, \Im \bv_{x+\sqrt{-1}\delta}) )^2 )
\end{split}
\end{equation}
where 
\begin{align}
\bv_{x+\sqrt{-1}\delta} &:= (c_i (x+\sqrt{-1}\delta)^i)_{i=0}^n \label{vz1}\\
\bw_{x+\sqrt{-1}\delta} &:=  (i c_i (x+\sqrt{-1}\delta)^{i-1} )_{i=0}^n, \label{vz3}
\end{align}
distances and span are computed over the reals rather than over the complex numbers, and we adopt the convention that the real or imaginary part of a complex vector is computed by taking the real or imaginary part of each of its coefficients separately.  Again, the quotient rule gives

\begin{align*}
\bv_{x+\sqrt{-1}\delta} &= v(x+\sqrt{-1}\delta) / R(x+\sqrt{-1}\delta) \\
\bw_{x+\sqrt{-1}\delta} &= \frac{1}{R(x+\sqrt{-1}\delta)} v'(x+\sqrt{-1}\delta) - \frac{R'(x+\sqrt{-1}\delta)}{R(x+\sqrt{-1}\delta)} \bv_{x+\sqrt{-1}\delta}.
\end{align*}
Thus we may rewrite the right-hand side of \eqref{vowed} as
\begin{align*}
&|\Re v(x+\sqrt{-1}\delta) \wedge \Im v(x+\sqrt{-1}\delta)|^{-1}  \\
& \quad \times 
( \operatorname{dist}( \Re v'(x+\sqrt{-1}\delta), \operatorname{span}(\Re v(x+\sqrt{-1}\delta), \Im v(x+\sqrt{-1}\delta) )^2 \\
&\quad \quad + 
\operatorname{dist}( \Im v'(x+\sqrt{-1}\delta), \operatorname{span}(\Re v(x+\sqrt{-1}\delta), \Im v(x+\sqrt{-1}\delta) )^2 ).
\end{align*}
Since
$$ \Re v(x+\sqrt{-1}\delta) = \frac{v(x+\sqrt{-1}\delta) + v(x-\sqrt{-1}\delta)}{2}$$
and
$$ \Im v(x+\sqrt{-1}\delta) = \frac{v(x+\sqrt{-1}\delta) - v(x-\sqrt{-1}\delta)}{2\sqrt{-1}}$$

It thus suffices to establish the bounds
\begin{align}
|v(x+\sqrt{-1}\delta) \wedge \Im v(x-\sqrt{-1}\delta)| &\gg \delta \label{lipz-1} \\
\operatorname{dist}(v_y(x+\sqrt{-1}\delta), \operatorname{span}(v(x+\sqrt{-1}\delta),v(x-\sqrt{-1}\delta))) &\ll \delta \label{lipz-2} \\
\operatorname{dist}(v_y(x-\sqrt{-1}\delta), \operatorname{span}(v(x+\sqrt{-1}\delta),v(x-\sqrt{-1}\delta))) &\ll \delta \label{lipz-3},
\end{align}
where the notions of distance and span are now over the complex numbers rather than the reals.  But these bounds can be achieved by adapting the proofs of \eqref{lip-1}, \eqref{lip-2}, \eqref{lip-3} (inserting factors of $\sqrt{-1}$ at various stages of the argument; we leave the details to the interested reader.

\section{Universality for flat  polynomials} \label{section:flat}

In this section we establish our main results for flat polynomials, namely Theorems \ref{tmt}, \ref{tmt-real}, \ref{tmt-zeroes}.  This will largely be accomplished by invoking the results obtained in previous sections.

The first basic result we will need is a concentration result for the log-magnitude $\log |f(z)|$ of a flat polynomial:

\begin{lemma}[Concentration for log-magnitude]  Let $C, \eps > 0$ be constants, let $n$ be a natural number, and let $z$ be a complex number with
$$n^{\eps} \le |z|  \le C n^{1/2}.$$
Let $f = f_{n,\xi}$ be a flat polynomial whose atom distribution $\xi$ has mean zero and variance one with $\E |\xi|^{2+\eps} \leq C$.  Then with overwhelming probability, one has
$$ \log |f(z)| = \frac{1}{2} |z|^2 + O(n^{o(1)})$$
when $n^\eps \leq |z| \leq n^{1/2}$, and
$$ \log |f(z)| = n \log |z| - \frac{1}{2} n \log n + \frac{1}{2} n + O(n^{o(1)})$$
when $n^{1/2} \leq |z| \leq C n^{1/2}$.
The implied constants in the asymptotic notation can depend on $C,\eps$.
\end{lemma}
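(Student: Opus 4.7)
The plan is to apply Lemma \ref{lemma:concentration-general} with the coefficients $c_i = 1/\sqrt{i!}$, reducing the task to two largely separate computations: first, an asymptotic estimate for $V(z) = \sum_{i=0}^n |z|^{2i}/i!$, showing that $\tfrac{1}{2}\log V(z)$ matches the stated leading orders; and second, the verification of the lacunarity hypothesis on the sequence $b_i := |z|^{2i}/i!$.

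For the $V(z)$ estimate, write $T := |z|^2$. In the regime $n^{2\eps} \le T \le n$, I would compare $V(z)$ with the full exponential $e^T$: the difference $e^T - V(z) = \sum_{i > n} T^i/i!$ is a Poisson tail and is negligible as long as $T$ is not too close to $n$, while for $T$ near $n$ the two-sided bounds $b_{\lfloor T \rfloor} \le V(z) \le e^T$ combined with Stirling give $\log V(z) = T + O(\log n)$. In the regime $n \le T \le C^2 n$, the maximum term is now $b_n = T^n/n!$ (since $b_{i+1}/b_i = T/(i+1) \ge 1$ throughout $i \le n-1$); factoring $V(z) = b_n \sum_{k=0}^n \prod_{l=0}^{k-1}(n-l)/T$, the sum is $O(\sqrt{n})$ uniformly (geometric when $T$ is bounded away from $n$, Gaussian-like of width $\sqrt{n}$ when $T \approx n$), so $\log V(z) = \log b_n + O(\log n)$, which by Stirling equals $2n\log|z| - n\log n + n + O(\log n)$. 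Dividing by two yields the two stated main terms, with the $O(\log n) = O(n^{o(1)})$ error absorbed into the conclusion.

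For the lacunary subsequence, let $i_* := \min(\lfloor T\rfloor, n)$ play the role of the peak index of $b_i$. I would build the subsequence greedily moving away from $i_*$: rightward (toward $n$) in the regime $T \le n$, and leftward (toward $0$) in the regime $T \ge n$. Using the quadratic approximation $\log(b_{i_* + k}/b_{i_*}) \approx -k^2/(2T)$ (and its leftward analogue $\log(b_{n-k}/b_n) \approx -k\log(T/n) - k^2/(2n)$ when $i_* = n$), one can fit any prescribed number $m$ of greedy lacunary indices (with ratio $\ge 4$ in $b_i$, equivalently $\ge 2$ in $|c_i z^i|$) within a window of size $O(\sqrt{mT})$, which stays well inside $[0,n]$ for the range of $T$ considered. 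Choosing $m := \lceil\log^2 n\rceil$ gives $m = \omega(\log n)$ while keeping $b_{i_m}/b_{i_1} \ge 4^{-m} = \exp(-O(\log^2 n))$; combined with the lower bound $b_{i_*}/V(z) \gg 1/\sqrt{n}$ from the first step, this yields $|c_{i_m} z^{i_m}| \ge V(z)^{1/2} \exp(-n^{o(1)})$, exactly the hypothesis of Lemma \ref{lemma:concentration-general}.

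Applying that lemma then gives $\log|f(z)| = \tfrac{1}{2}\log V(z) + O(n^{o(1)})$ with overwhelming probability, and substituting the two asymptotic forms of $\log V(z)$ from the first step yields the two formulas claimed. The main obstacle I anticipate is the transitional regime $|z| \approx \sqrt{n}$, where the peak of $b_i$ is pressed against the boundary $i = n$, the decay of $b_i$ away from the peak degenerates from geometric to Gaussian, and the ``effective'' peak value is $b_n$ rather than $b_{\lfloor T \rfloor}$; there one must check simultaneously that the window of size $O(\sqrt{mT})$ remains in $[0,n]$, that the lacunary subsequence still has length $\omega(\log n)$, and that $b_{i_m}/V(z)$ does not drop below $\exp(-n^{o(1)})$. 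The remaining regimes reduce to routine applications of the already-stated tools.
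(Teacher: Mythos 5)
Your proposal is correct and follows essentially the same route as the paper: estimate $V(z)$ via Stirling, exhibit a lacunary subsequence of $|z^i/\sqrt{i!}|$ near the index maximizing that sequence, and invoke Lemma \ref{lemma:concentration-general}. The one place you diverge is in building the lacunary subsequence \emph{rightward} from the peak when $|z|^2 \leq n$, which is what creates the transitional difficulty you flag at $|z| \approx \sqrt{n}$ (no room between the peak and the boundary $i=n$); the paper avoids this entirely by always working on the increasing stretch to the \emph{left} of the peak --- taking the indices in $[|z|^2/2, |z|^2]$ when $n^\eps \leq |z| \leq \sqrt{n}$ and in $[n/2,n]$ when $\sqrt{n} \leq |z| \leq C\sqrt{n}$ --- and applying Lemma \ref{lemma:length} to the reversal of that monotone subsequence, for which there is always ample room since the window length $O(\sqrt{m|z|^2})$ is negligible compared to $|z|^2 \geq n^{2\eps}$. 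Adopting that choice makes your quadratic-approximation bookkeeping uniform across all regimes and disposes of the obstacle you anticipated.
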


Note that some lower bound on $|z|$ is necessary here, because $\log |f(0)|$ has the distribution of $\log |\xi|$ and this does not need to concentrate at the origin if $\xi$ is discrete (and in particular, $\xi$ could equal zero with non-zero probability).

\begin{proof}  We first compute the quantity $V(z)$ from \eqref{V-def}.  In the flat case we have
$$ V(z) = \sum_{i=0}^n \frac{|z|^{2i}}{i!}.$$
A standard application of Taylor expansion or Stirling approximation (see e.g. \cite[Lemma 64]{tv-iid}) shows that
$$ \log V(z) = |z|^2 + O(n^{o(1)})$$
for $|z| \leq n^{1/2}$ and
$$ \log V(z) = 2 n \log |z| - n \log n + n + O(n^{o(1)})$$
for $|z| \geq n^{1/2}$.  

We now apply Lemma \ref{lemma:concentration-general}.  Comparing that lemma with the current situation, we see that it will suffice to find indices
 indices $i_1, \dots, i_m \in \{0,\ldots,n\}$ for some $m = \omega(\log n)$ such that we have the lacunarity property
$$ |z^{i_j}/\sqrt{i_j!}|  \ge  2 |z^{i_{j+1} }/\sqrt{i_{j+1}!}| $$
for all $1 \leq j < m$, and the lower bound
$$
 |z^{i_m}/\sqrt{i_m}!|  \ge   V(z)^{1/2} \exp (- n^{o(1)} ).
$$

Observe that the sequence $i \mapsto |z^i/\sqrt{i!}|$ is increasing for $i<|z|^2$ and decreasing for $i>|z|^2$, with its largest value being at least $(V(z)/(n+1))^{1/2}$.  Also, the ratio between adjacent elements of this sequence is $O(1)$ when $i$ is comparable to $|z|^2$.  If $n^\eps \leq |z| \leq \sqrt{n}$, then the desired indices $i_0,\ldots,i_m$ can then be obtained by applying Lemma \ref{lemma:length} to the (reversal of the) subsequence of the $|z^i/\sqrt{i!}|$ for which $|z|^2/2 \leq i \leq |z|^2$ (note that the ratio between the largest and smallest elements of this sequence is at least $\exp(c |z|^2) \geq \exp(c n^{2\eps})$ for some $c>0$).  Similarly, if $\sqrt{n} \leq |z| \leq C \sqrt{n}$, the claim follows by applying Lemma \ref{lemma:length} to the (reversal of the) subsequence of the $|z^i/\sqrt{i!}|$ for which $n/2 \leq i \leq n$.
\end{proof}

Note that if we let $G: \C \to \R$ be the function defined by 
$$ G(z) := \frac{1}{2} |z|^2$$
for $|z| \leq \sqrt{n}$ and $$ G(z) := n \log |z| - \frac{1}{2} n \log n + \frac{1}{2} n$$
then a short computation shows that
$$ \Delta G(z) = 2 1_{B(0,\sqrt{n})}(z)$$
in the sense of distributions.  Applying Proposition \ref{critloc} (after performing an infinitesimal regularization of $G$ at the boundary of $B(0,\sqrt{n})$ to make it smooth), we conclude that for any $n^{-C} \leq c \leq r \leq C \sqrt{n}/3$ and $z_0 \in B(0,C \sqrt{n}/3)$ with the property that $B(z_0,r+c) \backslash B(z_0,r-c)$ is disjoint from $B(0,n^{\eps})$, with overwhelming probability $f$ is non-vanishing and obeys the \emph{local circular law}
\begin{equation}\label{aaa}
 N_{B(z_0,r)}(f) = \int_{B(z_0,r)} \frac{1}{\pi} 1_{B(0,\sqrt{n})}(z)\ dz + O( n^{o(1)} c^{-1} r ) + 
O\left( \int_{B(z_0,r+c) \backslash B(z_0,r-c)} 1_{B(0,\sqrt{n})}(z)\ dz \right).
\end{equation}
 This already gives axiom (i) for Theorems \ref{replace}, \ref{replace-real}.  The formula \eqref{aaa} leads to two further consequences of importance to us.  First, for any $z_0 \in B(0, C \sqrt{n}/3)$ and $r \geq 1$, one has with overwhelming probability that
\begin{equation}\label{lc2}
N_{B(z_0,r)} \ll n^{o(1)} r^2.
\end{equation}
Indeed, this claim is trivial for $r \geq \sqrt{n}/3$ (say) from the trivial bound $N_{B(z_0,r)} \leq n$, and for $1 \leq r \leq \sqrt{n}/3$ the claim follows from \eqref{aaa} with $c=1$ and bounding $O( \int_{B(z_0,r+1) \backslash B(z_0,r-1)} 1_{B(0,\sqrt{n})}(z)\ dz )$ by $O(r)$, after enlarging $r$ as necessary in order to avoid the ball $B(0,n^\eps)$.  (This incurs a loss of $n^{\eps+o(1)}$ rather than $n^{o(1)}$, but the gain of $n^{o(1)}$ can then be recovered by diagonalizing in $\eps$.)  In particular, this gives axiom (ii) for Theorems \ref{replace}, \ref{replace-real}.

We will also apply \eqref{aaa} in the case when $z_0=0$, $r=\sqrt{n} + n^{1/2-\eps}$, and $c := n^{1/2-\eps}$, leading to the bound
\begin{equation}\label{lc1}
N_{B(0,\sqrt{n}+n^{1/2-\eps})} = n + O(n^{\eps+o(1)})
\end{equation}
with overwhelming probability.  In other words, with overwhelming probability, all but $O(n^{\eps+o(1)})$ of the zeroes of $f$ lie inside the disk $B(0,\sqrt{n} + n^{1/2-\eps})$.

Next, we establish the comparability of log magnitudes required for axioms (iii) of Theorems \ref{replace}, \ref{replace-real}.

\begin{proposition}[Comparability of log-magnitudes]\label{clm}
Let $C, \eps > 0$ be constants, and let $c_0 > 0$ be sufficiently small depending on $\eps$.  Let $n$ be a natural number, let $1 \leq k \leq n^{c_0}$ be another natural number, and let $z_1,\ldots,z_k$ be complex numbers such that
$$n^{\eps} \le |z|  \le n^{1/2}+C.$$
Let $f_{n,\xi}, f_{n,\tilde \xi}$ be flat polynomials whose atom distributions $\xi, \tilde \xi$ have mean zero and variance one matching moments to second order with $\E |\xi|^{2+\eps}, |\tilde \xi|^{2+\eps} \leq C$.  Let $F: \C^k \to \C$ be a smooth function obeying the bounds
$$ |\nabla^a F(z)| \leq C$$
for all $0 \leq a \leq 3$.  Then, if $c_0$ is sufficiently small, one has
$$ \E \Big( F(\log |f(z'_1)|, \ldots, \log |f(z'_{k'})|) - F(\log |\tilde f(z'_1)|, \ldots, \log |\tilde f(z'_{k'})|)  \Big) = O(n^{-c_0}),$$
where the implied constant in the $O()$ notation depends on $C,\eps,c_0$.
\end{proposition}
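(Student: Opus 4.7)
The plan is to deduce the proposition as a direct application of Theorem~\ref{tmp-general} (the two-moment theorem for log-magnitudes) with $\alpha_0 = c_0$ and a suitable choice of $\alpha_1 > 0$ depending only on $\eps$. The hypotheses of that theorem on the atoms $\xi, \tilde\xi$ (mean zero, unit variance, matching moments to second order, and a uniform $2+\eps$ moment bound) are immediate from our assumptions; the derivative bound on $F$ carries over since $|\nabla^a F| \leq C \leq n^{c_0} = n^{\alpha_0}$ once $n$ is large enough; and the cardinality bound $k \leq n^{\alpha_0}$ is built into the hypotheses. The only nontrivial step is to verify the delocalization bound \eqref{maxbound} for the flat coefficients $c_i = 1/\sqrt{i!}$ at each reference point $z_j$ with $n^\eps \leq |z_j| \leq \sqrt{n} + C$.

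To do this I would reuse the asymptotic expansion of $V(z)$ already carried out earlier in Section~\ref{section:flat}, namely
\begin{equation*}
\tfrac{1}{2}\log V(z_j) = \tfrac{1}{2}|z_j|^2 + O(n^{o(1)}) \quad \text{if } n^\eps \leq |z_j| \leq \sqrt{n},
\end{equation*}
and $\tfrac{1}{2}\log V(z_j) = n\log|z_j| - \tfrac{1}{2} n\log n + \tfrac{1}{2} n + O(n^{o(1)})$ if $\sqrt{n} \leq |z_j| \leq \sqrt{n}+C$. The quantity $\log|c_i z_j^i| = i\log|z_j| - \tfrac{1}{2}\log i!$ is maximized over $0 \leq i \leq n$ near the integer $i_\ast \approx |z_j|^2$ in the bulk and at $i = n$ in the edge regime, and a direct application of Stirling's formula yields
\begin{equation*}
\max_{0 \leq i \leq n} \log|c_i z_j^i| - \tfrac{1}{2}\log V(z_j) \leq -\tfrac{1}{2}\log|z_j| + O(n^{o(1)})
\end{equation*}
in the bulk, while the edge analysis gives the weaker but still adequate bound $\leq -\tfrac{1}{4}\log n + O(n^{o(1)})$ (reflecting the fact that near $|z_j|=\sqrt{n}$ the top $\Theta(\sqrt{n})$ terms of $V(z_j)$ all contribute comparably to the maximum term). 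Combining these estimates with the lower bound $|z_j| \geq n^\eps$ shows that \eqref{maxbound} holds with $\alpha_1 := \min(\eps/3, 1/5)$ for all sufficiently large $n$.

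With $\alpha_1$ now a positive constant depending only on $\eps$, Theorem~\ref{tmp-general} applies whenever $c_0$ is small enough depending on $\alpha_1$ and $\eps$, and its conclusion is precisely the bound $O(n^{-c_0})$ claimed in the proposition. The main delicate point in the plan is the edge analysis: at $i=n$ the single term $|c_n z_j^n|$ is only $O(n^{-1/4})$ times $V(z_j)^{1/2}$ rather than exponentially smaller, so one cannot hope for a large value of $\alpha_1$ there; but since Theorem~\ref{tmp-general} only requires \emph{some} fixed $\alpha_1 > 0$, the modest value $\alpha_1 = 1/5$ at the edge is already enough, and no additional structural argument is needed.
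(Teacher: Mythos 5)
Your proposal is correct and follows essentially the same route as the paper: reduce to Theorem~\ref{tmp-general} and verify the delocalization bound \eqref{maxbound} by a Stirling analysis showing that the peak of $i \mapsto |z_j^i/\sqrt{i!}|$ is spread over $\Theta(|z_j|)\gg n^{\eps}$ comparable indices, so that the maximal term is $\ll n^{-\eps/2+o(1)}V(z_j)^{1/2}$ in both the bulk and edge regimes. The paper phrases this as a counting of comparable terms yielding $\alpha_1=\eps/2$ rather than your direct max-versus-$V$ estimate, but the computation and conclusion are the same.
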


\begin{proof}  We may assume that $n$ is sufficiently large depending on $C,\eps,c_0$, as the claim is trivial otherwise.  We may also take $\eps$ to be small (e.g. $\eps < 1/4$).

We use Theorem \ref{tmp-general}.  Inspecting the hypotheses and conclusion of that theorem, we see that it will suffice to verify the delocalization bound
\begin{equation}\label{zji}
 |z_j^i / \sqrt{i!}| \ll n^{-\alpha_1} V(z_j)^{1/2}
 \end{equation}
for all $1 \leq j \leq n$ and some $\alpha_1>0$ that can depend on $\eps$ but is independent of $c_0$.  

Fix $j$.  As observed previously, the sequence $|z_j^i / \sqrt{i!}|$ is increasing for $i<|z_j|^2$ and decreasing for $i>|z_j|^2$.  A routine application of Stirling's formula reveals that the magnitudes $|z_j^i / \sqrt{i!}|$ are comparable to each other for $i = |z_j|^2 + O( |z_j| )$, which in the regime $n^\eps \leq |z_j| \leq \sqrt{n}+C$ occupies at least $\gg n^{\eps}$ of the indices $i$ in $\{0,\ldots,n\}$, including the index $i$ that maximizes $|z_j^i / \sqrt{i!}|$.   The claim \eqref{zji} then follows with $\alpha_1 := \eps/2$.
\end{proof}

As we have now verified all three axioms (i)-(iii) of Theorem \ref{replace}, we obtain Theorem \ref{tmt} as an immediate consequence.  To establish Theorem \ref{tmt-real}, we see from Theorem \ref{replace-real} (and comparing both real atom distributions $\xi, \tilde \xi$ to the real gaussian distribution $N(0,1)_\R$), it suffices to establish axiom (iv) of Theorem \ref{replace-real} in the case that $\tilde \xi$ has the distribution of $N(0,1)_\R$.  More precisely, it suffices to establish the following estimate (which is actually a little stronger than we need):

\begin{proposition}[Level repulsion]\label{level-repuls}
Let $\eps>0$, and let $C > 1$ be a sufficiently large constant. Let $n$ be a natural number, and let $x,y \in \R$ and $z \in \C$ be such that
\begin{equation}\label{xyz}
n^\eps \leq |x|, |y|, |z| \leq \sqrt{n}+C
\end{equation}
and
$$ |x-y|, |\Im z| \leq 1/C.$$
Let $f = f_{n,\xi}$ be a flat polynomial whose atom distribution $\xi$ is drawn from the real gaussian ensemble $N(0,1)_\R$.  Then we have the pointwise bounds
\begin{equation}\label{rho-20a}
\rho^{(2,0)}_{\tilde f}(x,y)  \ll |x-y|
\end{equation}
and 
\begin{equation}\label{rho-01a}
\rho^{(0,1)}_{\tilde f}(z)  \ll |\Im z|,
\end{equation}
where the implied constants depend on $C$.
\end{proposition}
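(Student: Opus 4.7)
The plan is to apply Lemma \ref{lemma:RB1} with the Gaussian normalizer $R(z) := e^{-z^2/2}$, which is holomorphic and nonvanishing on any disk. With this choice, the normalized vector $v(z) = (R(z) c_i z^i)_{i=0}^n$ becomes $v(z) = (\psi_i(z))_{i=0}^n$ where $\psi_i(z) := e^{-z^2/2} z^i/\sqrt{i!}$. The pointwise bound
$$|v(z)|^2 = e^{-\Re(z^2)} \sum_{i=0}^n \frac{|z|^{2i}}{i!} \leq e^{|z|^2 - \Re(z^2)} = e^{2(\Im z)^2} \leq e^{2 r_0^2}$$
for $z \in B(x, r_0)$ immediately gives hypothesis \eqref{vo} with a constant depending only on $r_0$.

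The core of the argument is the verification of the wedge lower bound \eqref{vos}. Using the recurrence $\psi_i'(x) = -x\psi_i(x) + \sqrt{i}\,\psi_{i-1}(x)$, one writes $v'(x) = -x v(x) + w(x)$ with $w(x)_i := \sqrt{i}\,\psi_{i-1}(x)$, so $|v(x) \wedge v'(x)|^2 = |v(x)|^2 |w(x)|^2 - (v(x) \cdot w(x))^2$. Setting $t := x^2$ and writing $P_k(t) := \sum_{j=0}^{k} e^{-t} t^j/j!$ for the Poisson CDF, a short computation yields $|v(x)|^2 = P_n(t)$, $v(x) \cdot w(x) = x P_{n-1}(t)$, and $|w(x)|^2 = P_{n-1}(t) + t P_{n-2}(t)$, so that
$$D_n(t) := |v(x) \wedge v'(x)|^2 = P_n P_{n-1} + t(P_n P_{n-2} - P_{n-1}^2).$$

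The main obstacle is establishing $D_n(t) \geq c > 0$ uniformly for $n^{2\eps} \leq t \leq n + O(\sqrt{n})$. I would split into regimes. In the bulk $t \leq n - n^{3/4}$, Chernoff's inequality for the Poisson distribution makes $1 - P_k$ super-polynomially small for $k \in \{n-2, n-1, n\}$, and expansion yields $D_n(t) = 1 + o(1)$. In the edge regime $t = n + s\sqrt{n}$ with $s$ bounded above, the local CLT gives $P_k(t) = \Phi((k-t)/\sqrt{t}) + O(n^{-1/2})$ and $e^{-t} t^n/n! = n^{-1/2}\phi(s) + O(n^{-1})$; substituting into $D_n$ and observing that all $\Theta(\sqrt{n})$-scale contributions cancel produces
$$D_n(t) = \Phi(-s)^2 + s\phi(s)\Phi(-s) - \phi(s)^2 + O(n^{-1/2}).$$
The leading expression equals $\Phi(-s)^2\,\operatorname{Var}(Y \mid Y > s)$ for $Y \sim N(0,1)$, as one verifies from the formula $\operatorname{Var}(Y\mid Y > s) = 1 + s\phi(s)/\Phi(-s) - (\phi(s)/\Phi(-s))^2$; hence it is strictly positive for every finite $s$ and tends to $1$ as $s \to -\infty$, so by continuity is bounded below by a positive constant on $(-\infty, 3C]$, which covers the range $s = (t-n)/\sqrt{n} \leq 2C + C^2/\sqrt{n}$ arising from $|x| \leq \sqrt{n} + C$.

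With both hypotheses of Lemma \ref{lemma:RB1} verified uniformly in the allowed range of $x$ (with constants depending only on the proposition's $C$ and a fixed small $r_0$), the lemma outputs $\rho^{(2,0)}_{\tilde f}(x, x+\delta) \ll \delta$ and $\rho^{(0,1)}_{\tilde f}(x + \sqrt{-1}\delta) \ll \delta$ for all $\delta$ below a threshold $\delta_0 > 0$ determined by $r_0$ and $c$. Choosing the $C$ in the proposition large enough that $1/C < \delta_0$ (permitted since $C$ is only required to be sufficiently large) ensures the hypotheses $|x-y|, |\Im z| \leq 1/C$ force $\delta$ into the admissible range, yielding \eqref{rho-20a} and \eqref{rho-01a}.
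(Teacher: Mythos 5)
Your proposal is correct and follows essentially the same route as the paper: both apply Lemma \ref{lemma:RB1} with the normalizer $R(z)=e^{-z^2/2}$ and verify \eqref{vo} by the identical $e^{|z|^2-\Re(z^2)}$ computation. The only difference is in verifying \eqref{vos}: the paper lower-bounds the double sum $e^{-2x^2}\sum_{i<j}\frac{|i-j|^2}{x^2}\frac{x^{2i}}{i!}\frac{x^{2j}}{j!}$ directly via Stirling by restricting to indices $i,j = x^2+O(x)$, whereas you derive the exact Poisson-CDF identity $D_n = P_nP_{n-1}+t(P_nP_{n-2}-P_{n-1}^2)$ and control it by Poisson concentration and the central limit theorem (with the truncated-normal-variance interpretation at the edge) --- a more explicit but equivalent computation.
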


A modification of the calculations below in fact show that the bounds \eqref{rho-20a}, \eqref{rho-01a} continue to hold without the hypothesis \eqref{xyz}, but we will only need the bounds under the hypothesis \eqref{xyz}.

\begin{proof}  We will apply Lemma \ref{lemma:RB1} with $R(z) := e^{-z^2/2}$.  Thus, it suffices to establish the bounds
\begin{align}
|v(z)| &\ll 1 \label{vo-a}\\
\left|v(x) \wedge v'(x)\right| &\gg 1 \label{vos-a} \\
\end{align}
for all $z \in B(x_0,1)$, where
$$ v(z) := e^{-z^2/2} ( z^i / \sqrt{i!} )_{i=0}^n.$$

We begin with the proof of \eqref{vo-a}.  We have
$$
|v(z)|^2 = |e^{-z^2}| \sum_{i=0}^n \frac{|z|^{2i}}{i!}.$$
Note that when $z \in B(x_0,1)$, one has
$$ |e^{-z^2}| \ll e^{-|z|^2}$$
while from Taylor series one has
$$ \sum_{i=0}^n \frac{|z|^{2i}}{i!} \leq \sum_{i=0}^\infty \frac{|z|^{2i}}{i!} = e^{|z|^2}$$
and the claim \eqref{vo-a} follows.

Now we prove \eqref{vos-a}.  Observe that
$$ 
v'(x) = e^{-x^2/2} \left(\frac{i-x^2}{x} \frac{x^i}{\sqrt{i!}}\right)_{i=0}^n 
$$
and so
$$ |v(x) \wedge v'(x)|^2 = e^{-2x^2} \sum_{0 \leq i < j \leq n} \frac{|i-j|^2}{x^2} \frac{x^{2i}}{i!} \frac{x^{2j}}{j!}.$$
From Stirling's approximation we see that $\frac{x^{2i}}{i!}$ is comparable to $x^{-1} e^{x^2}$ when $i = x^2 + O(x)$, and the claim \eqref{vos-a} easily follows (noting that $x^2 \leq n + O(x)$ when $x \leq \sqrt{n}+C$).
\end{proof}

As all of the hypotheses (i)-(iv) of Theorem \ref{replace-real} are obeyed, Theorem \ref{tmt-real} is now established.

Finally, we are able to establish Theorem \ref{tmt-zeroes}.  Let $\eps, n, f = f_{n,\xi}$ be as in that theorem.  From \eqref{lc1} we see that with overwhelming probability, there are at most $O(n^{1/4+o(1)})$ real zeroes outside the interval $[-\sqrt{n}-n^{1/4}, \sqrt{n}+n^{1/4}]$.  Meanwhile, by covering the intervals $[-\sqrt{n}-n^{1/4},-\sqrt{n}+n^{1/4}]$, $[-n^{1/4},n^{1/4}]$, and $[\sqrt{n}-n^{1/4},\sqrt{n}+n^{1/4}]$ by $O(n^{1/4})$ disks of radius $1$ and applying \eqref{lc2} and the union bound, we see that with overwhelming probability, there are also $O(n^{1/4+o(1)})$ zeroes in these intervals.  In view of these facts, it suffices to show that for any interval $I \subset [-\sqrt{n}+n^{1/4}, -n^{1/4}] \cup [n^{1/4}, \sqrt{n}-n^{1/4}]$, one has
$$ \E N_I = \frac{1}{\pi} |I| + O(n^{1/2-c})$$
with probability $1-O(n^{1/2-c})$.

By approximating the indicator function $1_I$ above and below by smooth functions, it will suffice to show that
$$ \E \sum_{1 \leq i \leq n: \zeta_i \in \R} F(\zeta_i) = \frac{1}{\pi} \int_\R F(x)\ dx + O(n^{1/2-c})$$
for any smooth function $F: \R \to \R$ supported in $\{ x \in \R: n^{1/4}/2 \leq |x| \leq \sqrt{n} - n^{1/4}/2\}$ which obeys the derivative bounds
$$ |F^{(a)}(x)| \ll 1$$
for all $0 \leq a \leq 100$ (say).  

Fix $F$.  By \eqref{cord1}, we may rewrite the above claim as the bound
\begin{equation}\label{lemo}
 \int_\R F(x) \rho^{(1,0)}_{f_{n,\xi}}(x)\ dx = \frac{1}{\pi} \int_\R F(x)\ dx + O(n^{1/2-c}) 
 \end{equation}
Now let $f_{n,\tilde \xi}$ be a flat polynomial whose atom distribution $\tilde\xi$ is given by the real gaussian $N(0,1)_\R$.  By smoothly decomposing $F$ into $O(n^{1/2})$ components each supported on an interval $[x-1,x+1]$ and applying Theorem \ref{tmt-real} repeatedly, we see that
$$ \int_\R F(x) \rho^{(1,0)}_{f_{n,\xi}}(x)\ dx = \int_\R F(x) \rho^{(1,0)}_{f_{n,\tilde \xi}}(x)\ dx + O(n^{1/2-c}) $$
Thus, it suffices to establish the analogue of \eqref{lemo} for the real gaussian flat polynomial $f_{n,\tilde \xi}$.  Such a bound can be implicitly extracted from the work of \cite{EK}, but for the sake of completeness we give a proof of this bound here.

Using the Kac-Rice formula (Lemma \ref{krf}) we have
$$  \rho^{(1,0)}_{f_{n,\tilde \xi}}(x) = p_{\R}( f_{n,\tilde \xi}(x) = 0 ) \E\left( |f'_{n,\tilde \xi}(x)| | f_{n,\tilde \xi}(x)=0 \right)$$
for any real $x$.  By Lemma \ref{gauss}, we can write the right-hand side as
$$ \frac{1}{\pi} \frac{|\operatorname{dist}(\bw_x,\bv_x)|}{|\bv_x|}$$
where
$$ \bv_x := \left(\frac{x^i}{\sqrt{i!}}\right)_{i=0}^n$$
and
$$ \bw_x := \left(\frac{i}{x} \frac{x^i}{\sqrt{i!}}\right)_{i=0}^n.$$
for any non-zero $x$.  We can rearrange this as
$$ \frac{1}{\pi} \frac{|\operatorname{dist}(v'(x), v(x))|}{|v(x)|}$$
where
$$ v(x) := e^{-x^2/2} \left(\frac{x^i}{\sqrt{i!}}\right)_{i=0}^n$$
and
$$ v'(x) := e^{-x^2/2} \left(\frac{i-x^2}{x} \frac{x^i}{\sqrt{i!}}\right)_{i=0}^n.$$
We can expand
\begin{align*}
|v(x)|^2 &= e^{-x^2} \sum_{i=0}^n \frac{x^{2i}}{i!} \\
v(x) \cdot v'(x) &= e^{-x^2} \sum_{i=0}^n \frac{i-x^2}{x} \frac{x^{2i}}{i!} \\
|v'(x)|^2 &= e^{-x^2} \sum_{i=0}^n \left(\frac{i-x^2}{x}\right)^2 \frac{x^{2i}}{i!}. 
\end{align*}
By differentiating the identity
$$
e^{x^2} = \sum_{i=0}^\infty \frac{x^{2i}}{i!}$$
twice, we obtain the identities\footnote{These are also the identities for the mean and variance of a Poisson random variable.}
$$
\sum_{i=0}^\infty \frac{i-x^2}{x} \frac{x^{2i}}{i!} = 0$$
and
$$
\sum_{i=0}^\infty \left(\frac{i-x^2}{x}\right)^2 \frac{x^{2i}}{i!} = 1.$$
For $x$ in the support of $F$, we have $n^{\eps}/2 \leq |x| \leq n^{1/2} - n^{1/4}/2$, and if we truncate the above infinite sums to $n$ using Stirling's approximation we conclude that
$$ |v(x)|^2, |v'(x)|^2 = 1 + O(n^{-\eps+o(1)})$$
and
$$ v(x) \cdot v'(x) = O(n^{-\eps+o(1)})$$
so that
$$ \rho^{(1,0)}_{f_{n,\tilde \xi}}(x) = \frac{1}{\pi} + O(n^{-\eps+o(1)}),$$
which gives \eqref{lemo} for $\tilde \xi$ and hence for $\xi$.
This concludes the proof of Theorem \ref{tmt-zeroes}.

\subsection{The variance bound}

As asserted in the introduction, one can extend these calculations to obtain a variance bound $\Var N_I = O(n^{1-c})$.  We sketch the argument as follows.  As before, we may assume that $I$ is contained in the region $\{ x: n^{1/4} \leq |x| \leq \sqrt{n} - n^{1/4} \}$.  In addition to the bound \eqref{lemo} just established, one needs to establish the additional bound
\begin{equation}\label{limo}
 \int_\R \int_\R F(x) F(y) \rho^{(2,0)}_{f_{n,\xi}}(x,y)\ dx = (\frac{1}{\pi} \int_\R F(x)\ dx)^2 + O(n^{1-c}).
\end{equation}
Using Theorem \ref{tmt-real} as before, we may replace $\xi$ by $\tilde \xi$.  We can then apply the Kac-Rice formula and Lemma \ref{gauss} to conclude that
$$ \rho^{(2,0)}_{f_{n,\tilde \xi}(x,y)} = \frac{1}{2\pi} |v(x) \wedge v(y)|^{-1} \E( |W_x| |W_y| | (V_x,V_y) = (0,0) )$$ 
where $V_x, V_y, W_x, W_y$ are real gaussian random variables with mean zero and covariance matrix
$$
\E \begin{pmatrix} 
V_x^2 & V_x V_y & V_x W_x & V_x W_y \\
V_y V_x & V_y^2 & V_y W_x & V_y W_y \\
W_x V_x & W_x V_y & W_x^2 & W_x W_y \\
W_y V_x & W_y V_y & W_y W_x & W_y^2
\end{pmatrix}
=
\begin{pmatrix} |v(x)|^2 & v(x)\cdot v(y) & v(x) \cdot v_x(x) & v(x) \cdot v_x(y) \\
v(y) \cdot v(x) & |v(y)|^2 & v(y) \cdot v_x(x) & v(y) \cdot v_x(y) \\
v_x(x) \cdot v(x) & v_x(x) \cdot v(y) & |v_x(x)|^2 & v_x(x) \cdot v_x(y) \\
v_x(y) \cdot v(x) & v_x(y) \cdot v(y) & v_x(y) \cdot v_x(x) & |v_x(y)|^2
\end{pmatrix}.
$$
A rather tedious calculation along the lines of those used in the proof of Theorem \ref{tmt-zeroes} reveals that this covariance matrix differs from the identity matrix by $O( \exp(-|x-y|^2/10) ) + O( n^{-\eps + o(1)} )$ (say) in the region $n^{1/4}/2 \leq |x|, |y| \leq \sqrt{n}- n^{1/4}/2$, which implies that
$$ \rho^{(2,0)}_{f_{n,\tilde \xi}(x,y)} = \frac{1}{\pi^2} + O( \exp(-|x-y|^2/10) ) + O( n^{-\eps + o(1)} )$$
which gives \eqref{limo} for $\tilde \xi$.  We omit the details.

\section{Universality  for elliptic polynomials} \label{section:ellip} 

In this section we establish our main results for elliptic polynomials, namely Theorems \ref{tmt-elliptic}, \ref{tmt-elliptic-real}, \ref{rz-e}.  Our arguments here will be closely analogous to those for flat polynomials in the previous section.

This will largely be accomplished by invoking the results obtained in previous sections.  Again, our starting point is the concentration of log-magnitudes.

\begin{lemma}[Concentration for log-magnitude]  Let $C, \eps > 0$ be constants, let $n$ be a natural number, and let $z$ be a complex number with
$$n^{\eps} \le |z|  \le n^{1-\eps}.$$
Let $f = f_{n,\xi}$ be a rescaled elliptic polynomial whose atom distribution $\xi$ has mean zero and variance one with $\E |\xi|^{2+\eps} \leq C$.  Then with overwhelming probability, one has
$$ \log |f(z)| = \frac{1}{2} n \log(1 + \frac{|z|^2}{n}) + O(n^{o(1)}).$$
The implied constants in the asymptotic notation can depend on $C,\eps$.
\end{lemma}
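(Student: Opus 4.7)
Let me describe the plan.

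The proof will follow the exact same template as the flat polynomial concentration lemma proved just above, namely we compute the variance profile $V(z)$ explicitly and invoke Lemma \ref{lemma:concentration-general}. First, note that for the rescaled elliptic polynomial the coefficients are $c_i = \sqrt{\binom{n}{i} n^{-i}}$, so the binomial theorem gives
$$V(z) = \sum_{i=0}^n \binom{n}{i} n^{-i} |z|^{2i} = \left(1+\tfrac{|z|^2}{n}\right)^n,$$
whence $\tfrac12 \log V(z) = \tfrac{n}{2}\log(1+|z|^2/n)$, matching the claimed main term. Thus it remains only to verify the lacunary hypothesis of Lemma \ref{lemma:concentration-general}: for each $z$ in the stated range, we must exhibit indices $i_1,\dots,i_m\in\{0,\dots,n\}$ with $m=\omega(\log n)$, $|c_{i_j}z^{i_j}|\ge 2|c_{i_{j+1}}z^{i_{j+1}}|$, and $|c_{i_m}z^{i_m}|\ge V(z)^{1/2}\exp(-n^{o(1)})$.

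The sequence $b_i := |c_i z^i|^2 = \binom{n}{i}(t/n)^i$ with $t:=|z|^2$ is unimodal in $i$, with peak at $j^* := nt/(n+t)$ and peak value $b_{j^*} = V(z)/n^{O(1)}$ (by Stirling). The normalized ratios are
$$\frac{b_{j+1}}{b_j} = \frac{(n-j)t}{(j+1)n},$$
which equals $1$ at the peak. A Stirling/Taylor computation shows that near $j^*$ one has the Gaussian approximation
$$\log b_j = \log V(z) - \frac{(j-j^*)^2}{2\sigma^2} + O\!\left(\frac{|j-j^*|^3}{n^2}\right) + O(\log n), \qquad \sigma^2 := np(1-p) = \frac{n^2 t}{(n+t)^2},$$
valid for $|j-j^*|\lesssim n^{2/3}$ (in particular for $|j-j^*|\le \sigma\,n^{o(1)}$). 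Across the range $n^\eps\le|z|\le n^{1-\eps}$, a direct check gives $\sigma\ge n^\eps$, so the Gaussian regime is wide.

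Now fix $m:=\lceil (\log n)^2 \rceil$ and pick $K := \lceil \sigma\log n\rceil$, which satisfies $K\ll \min(j^*,n-j^*)$ in our range. On the interval $j^*\le j\le j^*+K$, the sequence $b_j$ is non-increasing and, by the formula for $b_j/b_{j+1}$ together with the Gaussian expansion, each consecutive ratio is bounded by $1+O(K/(np(1-p))) \le 2$ for $n$ large. Meanwhile, the Gaussian approximation gives
$$\log\frac{b_{j^*}}{b_{j^*+K}} = \frac{K^2}{2\sigma^2} + O(n^{o(1)}) = \frac{(\log n)^2}{2} + O(n^{o(1)}).$$
Applying Lemma \ref{lemma:length} with $C=2$ to the non-increasing subsequence $b_{j^*},b_{j^*+1},\dots,b_{j^*+K}$ therefore produces a lacunary subsequence $b_{i_1},\dots,b_{i_m}$ of length $m\gg \log_2(b_{j^*}/b_{j^*+K}) \gg (\log n)^2 = \omega(\log n)$, with all values at least $b_{j^*+K} \ge V(z)\exp(-O((\log n)^2)) = V(z)\exp(-n^{o(1)})$. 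Plugging into Lemma \ref{lemma:concentration-general} gives $\log|f(z)| = \tfrac12 \log V(z) + O(n^{o(1)})$, which is the claim.

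The one slightly delicate point is the book-keeping for the Taylor/Stirling expansion of $\log b_j$: one needs to verify uniformly in the two regimes $t\le n$ and $t\ge n$ that $\sigma\gtrsim n^\eps$, and that the cubic remainder in the Taylor expansion is harmless on the interval $[j^*,j^*+K]$; both are routine but require the standard unimodal analysis of the binomial distribution. The rest of the argument simply echoes the flat-polynomial case, replacing the flat variance profile $e^{|z|^2}$ by $(1+|z|^2/n)^n$.
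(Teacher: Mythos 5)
Your proposal is correct and follows essentially the same route as the paper: compute $V(z)=(1+|z|^2/n)^n$ by the binomial theorem, then verify the lacunarity hypothesis of Lemma \ref{lemma:concentration-general} using the unimodality of $i\mapsto\binom{n}{i}(|z|^2/n)^i$ (peak at $j^*=n|z|^2/(n+|z|^2)$) together with Lemma \ref{lemma:length}. The only difference is bookkeeping — you work on the window $[j^*,j^*+\sigma\log n]$ with an explicit Gaussian approximation, while the paper uses the window between $j^*/2$ and $j^*$ (resp.\ its mirror in $n-i$ for $|z|\ge\sqrt n$) and truncates the greedy subsequence; your version is, if anything, slightly more careful about ensuring the selected terms stay within $\exp(-n^{o(1)})$ of the peak.
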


\begin{proof}  As before, we first compute the quantity $V(z)$ from \eqref{V-def}.  This quantity is given by
$$ V(z) = \sum_{i=0}^n \frac{|z|^{2i}}{n^i} \binom{n}{i} = (1 + |z|^2/n)^n.$$
In particular, we have 
$$ \log V(z) = n \log(1 + \frac{|z|^2}{n}).$$
Applying Lemma \ref{lemma:concentration-general} as before,
 indices $i_1, \dots, i_m \in \{0,\ldots,n\}$ for some $m = \omega(\log n)$ such that we have the lacunarity property
$$ \left|\sqrt { {\binom{n}{i_j}} n^{-i_j} } z^{i_j}/\sqrt{i_j!}\right|  \ge  2 \left|\sqrt { {\binom{n}{i_{j+1}}} n^{-i_{j+1}} } z^{i_{j+1} }\right| $$
for all $1 \leq j < m$, and the lower bound
$$
 |\sqrt { {\binom{n}{i_m}} n^{-i_m} } z^{i_m}|  \ge   V(z)^{1/2} \exp (- n^{o(1)} ).
$$

The sequence $i \mapsto |\sqrt{\binom{n}{i} n^{-i}} z^i|$ is increasing for $i < \frac{|z|^2}{1+|z|^2/n}$ and decreasing for $i>\frac{|z|^2}{1+|z|^2/n}$, with its largest value being at least $(V(z)/(n+1))^{1/2}$.  If $n^\eps \leq |z| \leq \sqrt{n}$, the ratio between adjacent elements of this sequence is $O(1)$ in the range
$$ \frac{1}{2} \frac{|z|^2}{1+|z|^2/n} \leq i \leq \frac{|z|^2}{1+|z|^2/n},$$
and the claim then follows by applying Lemma \ref{lemma:length} to the (reversal of) this subsequence.  Conversely, if $\sqrt{n} \leq |z| \leq n^{1-\eps}$, then the ratio between adjacent elements of this sequence is $O(1)$ in the range
$$ \frac{1}{2} \frac{n}{1+|z|^2/n} \leq n-i \leq \frac{n}{1+|z|^2/n},$$
and the claim follows by Lemma \ref{lemma:length} to this subsequence.
\end{proof}

If we set
$$ G(z) := \frac{1}{2} n \log(1 + \frac{|z|^2}{n})$$
then a short computation shows that
$$ \Delta G(z) = \frac{2}{(1+|z|^2/n)^2}$$
Applying Proposition \ref{critloc}, we conclude that for any $n^{-C} \leq c \leq r \leq n^{1-\eps}/3$ and $z_0 \in B(0,n^{1-\eps}/3)$ with the property that $B(z_0,r+c) \backslash B(z_0,r-c)$ is disjoint from $B(0,n^{\eps})$, with overwhelming probability $f$ is non-vanishing and obeys
\begin{equation}\label{aaa2}
 N_{B(z_0,r)}(f) = \int_{B(z_0,r)} \frac{1}{\pi} (1+|z|^2/n)^{-2}\ dz + O( n^{o(1)} c^{-1} r ) + 
O\left( \int_{B(z_0,r+c) \backslash B(z_0,r-c)} (1+|z|^2/n)^{-2}\ dz \right).
\end{equation}
 This already gives axiom (i) for Theorems \ref{replace}, \ref{replace-real}.  Setting $c=2/r$, we conclude the non-concentration estimate
\begin{equation}\label{lcl-ellip}
 N_{B(z_0,r)} \ll n^{o(1)} r^2
\end{equation}
with overwhelming probability for any $z_0 \in B(0,C\sqrt{n})$ and any $r \geq 1$ such that
 $B(z_0,r+c) \backslash B(z_0,r-c)$ is disjoint from $B(0,n^{\eps})$ (note that the claim is trivial for say $r \geq \sqrt{n}/3$); this gives axiom (ii) for Theorems \ref{replace}, \ref{replace-real}.  

As before, the next stage is to establish axiom (iii) for Theorems \ref{replace}, \ref{replace-real}.

\begin{proposition}[Comparability of log-magnitudes]
Let $C, \eps > 0$ be constants, and let $c_0 > 0$ be sufficiently small depending on $\eps$.  Let $n$ be a natural number, let $1 \leq k \leq n^{c_0}$ be another natural number, and let $z_1,\ldots,z_k$ be complex numbers such that
$$n^{\eps} \le |z|  \le C n^{1/2}.$$
Let $f_{n,\xi}, f_{n,\tilde \xi}$ be normalized elliptic polynomials whose atom distributions $\xi, \tilde \xi$ have mean zero and variance one matching moments to second order with $\E |\xi|^{2+\eps}, |\tilde \xi|^{2+\eps} \leq C$.  Let $F: \C^k \to \C$ be a smooth function obeying the bounds
$$ |\nabla^a F(z)| \leq C$$
for all $0 \leq a \leq 3$.  Then, if $c_0$ is sufficiently small, one has
$$ \E \Big( F(\log |f(z'_1)|, \ldots, \log |f(z'_{k'})|) - F(\log |\tilde f(z'_1)|, \ldots, \log |\tilde f(z'_{k'})|)  \Big) = O(n^{-c_0}),$$
where the implied constant in the $O()$ notation depends on $C,\eps,c_0$.
\end{proposition}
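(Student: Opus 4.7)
The plan is to imitate the flat case (Proposition \ref{clm}) almost verbatim: apply the two-moment theorem for the log-magnitude, Theorem \ref{tmp-general}, and all that remains is to verify the delocalization hypothesis \eqref{maxbound} uniformly in $j$ and $i$, with an exponent $\alpha_1$ depending only on $\eps$. Once that is in place, choosing $\alpha_0$ (hence $c_0$) small enough in terms of $\alpha_1$ and the moment exponent $\eps$ produces the desired bound of $O(n^{-c_0})$ with the constants depending only on $C, \eps, c_0$.

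To verify \eqref{maxbound}, I would first recall from the proof of the concentration lemma that for the rescaled elliptic polynomial with coefficients $c_i = \sqrt{\binom{n}{i} n^{-i}}$ one has
\[
V(z) = (1+|z|^2/n)^n.
\]
Consequently, writing $p = p(z) := \frac{|z|^2/n}{1+|z|^2/n} = \frac{|z|^2}{n+|z|^2}$, one obtains the clean identity
\[
\frac{|c_i z^i|^2}{V(z)} = \binom{n}{i}\, p^i (1-p)^{n-i}
\]
valid for every $0 \leq i \leq n$. Hence the quantity I need to bound above by $n^{-2\alpha_1}$ is simply the maximum value of a binomial mass function with parameters $n$ and $p$.

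By Stirling's formula (or the local de Moivre--Laplace heuristic), this maximum is $O\bigl(1/\sqrt{n p (1-p)}\bigr)$ whenever the variance $n p(1-p)$ is $\gg 1$. In our range the variance evaluates to
\[
n p (1-p) \;=\; \frac{|z|^2}{(1+|z|^2/n)^2},
\]
so for $n^{\eps} \le |z| \le C n^{1/2}$ one computes $n p(1-p) \gg n^{2\eps}$ in the small regime and $n p(1-p) \asymp n$ in the large regime. The worst case is therefore $|z|\sim n^{\eps}$, and in all cases the delocalization bound \eqref{maxbound} is satisfied with $\alpha_1 := \eps/2$ (say), uniformly in $i$ and in $j$.

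With the delocalization exponent $\alpha_1$ in hand, the hypotheses of Theorem \ref{tmp-general} are met, and the theorem yields precisely the bound \eqref{leah} with any $c_0 \leq \alpha_0$ sufficiently small depending on $\alpha_1 = \eps/2$ and on $\eps$. The only real place to be careful is keeping track that $k \le n^{c_0}$ is absorbed by the role of $\alpha_0$ in Theorem \ref{tmp-general}, and that the derivative bounds assumed on $F$ match the hypotheses there (they do, since $|\nabla^a F|\le C \le n^{\alpha_0}$ trivially). There is no genuine obstacle; the point is simply that the binomial coefficients are more delocalized than the factorial weights appearing in the flat case, so the same strategy works and in fact works up to $|z|\asymp\sqrt n$ without the edge effects seen in Proposition \ref{clm}.
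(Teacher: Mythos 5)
Your proposal is correct and follows essentially the same route as the paper, which proves this proposition by verifying the delocalization hypothesis \eqref{maxbound} and invoking Theorem \ref{tmp-general} exactly as in the flat case. Your reformulation of $|c_i z^i|^2/V(z)$ as a binomial mass function $\binom{n}{i}p^i(1-p)^{n-i}$ with $p=|z|^2/(n+|z|^2)$ is just a clean packaging of the paper's observation that the weights are mutually comparable over a window of $\gg \sqrt{np(1-p)} \gg n^{\eps}$ indices around the peak at $i \approx |z|^2/(1+|z|^2/n)$, and correctly yields $\alpha_1=\eps/2$.
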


This is proven in exact analogy with Proposition \ref{clm} (with the quantity $|z_j|^2$ being replaced by $\frac{|z_j|^2}{1+|z_j|^2/n}$); we leave the details to the interested reader.

Applying Theorem \ref{replace}, we now obtain Theorem \ref{tmt-elliptic} as an immediate consequence.  To similarly use Theorem \ref{replace-real} to deduce Theorem \ref{tmt-elliptic-real}, we need the following analogue of Proposition \ref{level-repuls}:

\begin{proposition}[Level repulsion]\label{level-repuls-elliptic}
Let $\eps>0$, and let $C > 1$ be a sufficiently large constant. Let $n$ be a natural number, and let $x,y \in \R$ and $z \in \C$ be such that
\begin{equation}\label{xyz-elliptic}
n^\eps \leq |x|, |y|, |z| \leq C \sqrt{n}
\end{equation}
and
$$ |x-y|, |\Im z| \leq 1/C.$$
Let $f = f_{n,\xi}$ be a rescaled elliptic polynomial whose atom distribution is drawn from the real gaussian ensemble $N(0,1)_\R$.  Then we have the pointwise bounds
\begin{equation}\label{rho-20a-elliptic}
\rho^{(2,0)}_{\tilde f}(x,y)  \ll |x-y|
\end{equation}
and 
\begin{equation}\label{rho-01a-elliptic}
\rho^{(0,1)}_{\tilde f}(z)  \ll |\Im z|,
\end{equation}
where the implied constants depend on $C$.
\end{proposition}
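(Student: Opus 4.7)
The plan is to mimic the flat-polynomial argument of Proposition \ref{level-repuls} by invoking Lemma \ref{lemma:RB1} with a holomorphic normalization tailored to the rescaled elliptic variance $V(z) = (1+|z|^2/n)^n$. The natural analogue of the flat-case choice $R(z) = e^{-z^2/2}$ is
$$R(z) := (1+z^2/n)^{-n/2},$$
which, for a small fixed $r_0$, is holomorphic and nonvanishing on $B(x,r_0)$ whenever $|x| \le C\sqrt n$, since $\Re(1+z^2/n) \ge 1 - r_0^2/n > 0$ throughout the disk, so $(1+z^2/n)^{-n/2}$ may be defined by the principal branch. With this choice the vector
$$v(z) := R(z)\Big(\sqrt{\binom{n}{i} n^{-i}}\, z^i\Big)_{i=0}^n$$
satisfies $|v(z)|^2 = (1+|z|^2/n)^n / |1+z^2/n|^n$.

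First I would verify the uniform upper bound $|v(z)| \ll 1$ on $B(x,r_0)$. At $z=x$ this equals $1$; writing $z = x+w$ and Taylor-expanding $n\log(1+|z|^2/n) - n\log|1+z^2/n|$ to second order gives $O(|w|^2) = O(r_0^2)$ uniformly in $|x| \le C\sqrt n$, so the ratio stays bounded.

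Next I would verify the wedge bound $|v(x) \wedge v'(x)| \gg 1$. Writing $v = Ru$ with $u(z) := (c_i z^i)_{i=0}^n$, the identity $v' = R'u + Ru'$ and the vanishing of $u \wedge u$ give $v \wedge v' = R^2\, (u \wedge u')$, so $|v(x)\wedge v'(x)|^2 = V(x)^{-2}\, |u(x) \wedge u'(x)|^2$. Now
$$|u(x) \wedge u'(x)|^2 = x^{-2}\sum_{0 \le i<j \le n} c_i^2 c_j^2\, x^{2(i+j)}(j-i)^2 = \frac{V(x)^2}{x^2}\cdot \operatorname{Var}(Z),$$
where $Z$ is the integer-valued random variable on $\{0,\ldots,n\}$ with $\Pr(Z=i) = \binom{n}{i} n^{-i} x^{2i}/V(x)$. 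Its probability generating function is $(1 + tx^2/n)^n/(1+x^2/n)^n$, so $Z \sim \operatorname{Binomial}(n,p)$ with $p = x^2/(n+x^2)$, whence $\operatorname{Var}(Z) = np(1-p) = n^2 x^2/(n+x^2)^2$. Combining,
$$|v(x) \wedge v'(x)|^2 = \frac{n^2}{(n+x^2)^2},$$
which is $\gg 1$ throughout $|x| \le C\sqrt n$.

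With both hypotheses of Lemma \ref{lemma:RB1} verified, with implied constants depending only on $C$ and $r_0$, the lemma delivers the claimed bounds $\rho^{(2,0)}_f(x,y) \ll |x-y|$ and $\rho^{(0,1)}_f(z) \ll |\Im z|$ for $\delta$ sufficiently small. The one genuine issue is choosing the normalization $R$ so as to get the right cancellation between $V(z)^{1/2}$ and the $i$-th coefficient magnitude — here the binomial interpretation of the variance sum does the same job that the Poisson moment identities did in the flat case, and is the cleanest part of the argument. The rest is bookkeeping.
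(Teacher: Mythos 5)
Your proposal is correct and follows essentially the same route as the paper: the same choice $R(z)=(1+z^2/n)^{-n/2}$, the same reduction to the two hypotheses of Lemma \ref{lemma:RB1}, and the same computation of $|v(z)|^2$. The only (welcome) difference is that you carry out exactly the computation the paper declines to do for the wedge bound --- identifying $\sum_{i<j}c_i^2c_j^2x^{2(i+j)}(j-i)^2/(x^2V(x)^2)$ as the variance of a $\operatorname{Binomial}(n,x^2/(n+x^2))$ variable, giving $|v(x)\wedge v'(x)|^2=n^2/(n+x^2)^2$ exactly --- whereas the paper settles for a Stirling-based lower bound; your version is cleaner and even dispenses with the lower bound $|x|\geq n^\eps$ for that step.
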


\begin{proof}  
Applying Lemma \ref{lemma:RB1} with the holomorphic function $R(z) := (1+z^2/n)^{-n/2}$ on $B(x,1)$ (noting that we are well away from the poles $\pm \sqrt{-1} \sqrt{n}$ of this function), it suffices to establish the bounds
\begin{align}
|v(z)| &\ll 1 \label{vo-a-e}\\
\left|v(x) \wedge v'(x)\right| &\gg 1 \label{vos-a-e} 
\end{align}
for all $x \in \R$ with $n^\eps/2 < |x| \leq 2C\sqrt{n}$ and all $z \in B(x,1)$.

To prove \eqref{vo-a-e}, we compute
\begin{align*}
 |v(z)|^2 &:= |1+z^2/n|^{-n} \sum_{i=0}^n \binom{n}{i} n^{-i} |z|^{2i} \\
&= (\frac{1+|z|^2/n}{|1+z^2/n|})^n.
\end{align*}
But one can compute that $|1+z^2/n| = 1 + |z|^2/n + O(1/n)$, and the claim \eqref{vo-a-e} follows.

To obtain \eqref{vos-a-e}, we compute
$$ v'(x) = (1+x^2/n)^{-n/2} \left( (\frac{i-x^2/(1+x^2/n)}{x}) \sqrt{\binom{n}{i} n^{-i}} x^i \right)_{i=0}^n$$
and so
$$ |v(x) \wedge v'(x)|^2 = (1+x^2/n)^{-2n} \sum_{0 \leq i<j \leq n} \frac{|i-j|^2}{x^2} \binom{n}{i} n^{-i} x^{2i} \binom{n}{j} n^{-j} x^{2j}.$$
The expression on the right-hand side can in fact be computed exactly, but for the purposes of establishing the lower bound \eqref{vos-a-e}, we may appeal instead to Stirling's formula, which reveals that
$$ \binom{n}{i} n^{-i} x^{2i} \gg \frac{1}{x} (1+x^2/n)^n$$
when $i = \frac{x^2}{1+x^2/n} + O(x)$, and the claim follows much as in the analogous computation for flat polynomials in Proposition \ref{level-repuls}.
\end{proof}

As all of the hypotheses (i)-(iv) of Theorem \ref{replace-real} are obeyed, Theorem \ref{tmt-elliptic-real} is now established.

Finally, we establish Theorem \ref{rz-e}.  We need to show 

\begin{equation}\label{eni}
 \E N_I = \int_I \frac{1}{\pi} \frac{dx}{1+x^2/n} + O(n^{1/2-c})
\end{equation}
for all intervals $I$.

It will be convenient to use inversion symmetry to work in the region $I \subset [-\sqrt{n},\sqrt{n}]$.  Observe that if
$$ f_{n,\xi}(z) = \sum_{i=0}^n \xi_i \sqrt{\binom{n}{i} n^{-i}} z^i$$
is a rescaled elliptic polynomial, then
$$ n^{-n/2} z^n f_{n,\xi}(n/z) = \sum_{i=0}^n \xi_{n-i} \sqrt{\binom{n}{i} n^{-i}} z^i$$
is also a rescaled elliptic polynomial with the same distribution as $f_{n,\xi}$.  Thus the distribution of the zeroes of $f_{n,\xi}$ are invariant with respect to the inversion map $z \mapsto n/z$.  Among other things, this implies that if \eqref{eni} holds for an interval $I$ avoiding the origin, then it also holds for the inverse interval $\{ n/x: x \in I \}$.  From this (and \eqref{lcl-ellip}) it thus suffices to establish \eqref{eni} in the case $I \subset [-\sqrt{n},\sqrt{n}]$.

Covering the interval $[-n^{1/4},n^{1/4}]$ by $O(n^{1/4})$ balls of unit radius and then applying \eqref{lcl-ellip}, we see that this interval has $O(n^{1/4+o(1)})$ zeroes with overwhelming probability.  This establishes \eqref{eni} when $I \subset [-n^{1/4},n^{1/4}]$, so we may assume without loss of generality that $I \subset [-\sqrt{n},-n^{1/4}]$ or $I \subset [n^{1/4},\sqrt{n}]$.

By using Theorem \ref{tmt-elliptic-real} (and upper and lower bounding $1_I$ by smooth functions), it suffices to establish \eqref{eni} in the case when $I \subset \{ x: n^{1/4}/2 \leq |x| \leq 2\sqrt{n} \}$ and $\xi$ has the real gaussian distribution $N(0,1)_\R$.  By the Kac-Rice formula as in the previous section, we have
$$ \E N_I = \int_I \rho^{(1,0)}(x)\ dx$$
where
$$ \rho^{(1,0)}(x) =\frac{1}{\pi} \frac{|\operatorname{dist}(v'(x), v(x))|}{|v(x)|},$$
$$ v(x) := (1+x^2/n)^{-n/2} (\sqrt{\binom{n}{i}n^{-i}} x^i)_{i=0}^n$$
and
$$ v'(x) := (1+x^2/n)^{-n/2} \left(\frac{i-\frac{x^2}{1+x^2/n}}{x} \sqrt{\binom{n}{i}n^{-i}} x^i\right)_{i=0}^n.$$
We have
\begin{align*}
|v(x)|^2 &= (1+x^2/n)^{-n} \sum_{i=0}^n \binom{n}{i} n^{-i} x^{2i} \\
&= 1
\end{align*}
and thus on differentiation
$$ v(x) \cdot v'(x) = 0.$$
We also have
$$
|v'(x)|^2 = (1+x^2/n)^{-n} \sum_{i=0}^n \frac{(i-\frac{x^2}{1+x^2/n})^2}{x^2} \binom{n}{i} n^{-i} x^{2i}.$$
We can differentiate the binomial identity
$$ (1+x^2/n)^n = \sum_{i=0}^n \binom{n}{i} n^{-i} x^{2i}$$
to obtain
$$ \frac{x^2}{1+x^2/n} (1+x^2/n)^n = \sum_{i=0}^n i \binom{n}{i} n^{-i} x^{2i}$$
and
$$ \frac{x^4+1}{(1+x^2/n)^2})^2 = \sum_{i=0}^n i^2 \binom{n}{i} n^{-i} x^{2i}$$
(these are also the formulae for the mean and variance of a binomial random variable) and so
$$
|v'(x)|^2 = \frac{1}{(1+x^2/n)^2}.$$
This implies that
$$ \rho^{(1,0)}(x) =\frac{1}{\pi} \frac{1}{1+x^2/n}$$
and \eqref{eni} follows (indeed, the formula is even exact in this case). As a matter of fact, we can improve the error term $O(n^{1/2-\epsilon})$ to $O(| I| n^{-\epsilon})$ in this case. 

\section{Universality for Kac polynomials}\label{kac-sec} 

We now prove Theorems \ref{tmt-kac} and  \ref{tmt-kac-real}.
 As before, the first step is to obtain concentration results for the log-magnitude $\log |f(z)|$.  Here, a new difficulty arises: when $z$ is a root of unity, then $f(z)$ can vanish with polynomially small probability.  For instance, if $z=1$, $n$ is odd, and $\xi$ has the Bernoulli distribution (thus $P(\xi=+1)=P(\xi=-1)=1/2$), then $f(1) = \xi_0+\ldots+\xi_n$ vanishes with probability comparable to $1/\sqrt{n}$, as can be easily verified using Stirling's formula. The key new idea in  our proof is to show that this is  essentially  the only obstruction to concentration of the log magnitude, provided that $z$  stays away from zero and from infinity. 

\begin{lemma}[Concentration for log-magnitude]\label{cloc-kac}  Let $C, A, \eps > 0$ be constants, let $n$ be a natural number, and let $z$ be a complex number with
$$ \eps \le |z|  \le 1/\eps.$$
Let $f = f_{n,\xi}$ be a Kac polynomial whose atom distribution $\xi$ has mean zero and variance one with $\E |\xi|^{2+\eps} \leq C$.  Then one of the following holds:
\begin{itemize}
\item[(i)] $\eps \leq |z| \leq 1$, and one has $\log |f(z)| = O(n^{o(1)})$ with probability $1-O(n^{-A})$.
\item[(ii)] $1 \leq |z| \leq 1/\eps$, and one has $\log |f(z)| = n \log |z| + O(n^{o(1)})$ with probability $1-O(n^{-A})$.

\item[(iii)]  One has $z = \omega + O(n^{-A})$, where $\omega$ is a root of unity with $\omega^k=1$ for some $k = O(1)$.
\end{itemize}
The implied constants in the asymptotic notation can depend on $C,\eps,A$.
\end{lemma}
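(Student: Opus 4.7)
The plan is to deduce Lemma~\ref{cloc-kac} from Lemma~\ref{lemma:concentration-general} whenever $z$ stays a little bit away from the unit circle, and to handle the dangerous regime $||z|-1| = O(n^{-\alpha})$ by the inverse Littlewood--Offord and quantitative Gromov machinery alluded to immediately above the lemma. The target quantity is the variance
$$V(z) = \sum_{i=0}^n |z|^{2i},$$
for which a direct geometric-series computation gives $V(z) = O(n^{\alpha})$ when $|z| \leq 1 - n^{-\alpha}$, $V(z) = O(n)$ when $||z|-1| \leq n^{-\alpha}$, and $V(z) = |z|^{2n} \cdot O(n^{\alpha})$ when $|z| \geq 1 + n^{-\alpha}$; thus $\tfrac{1}{2}\log V(z) = O(n^{o(1)})$ in case (i) and $n\log|z| + O(n^{o(1)})$ in case (ii). The upper bound $\log|f(z)| \leq \tfrac{1}{2}\log V(z) + O(\log^2 n)$ with probability $1-\exp(-\Omega(\log^2 n))$ (which is stronger than $1-O(n^{-A})$) follows immediately from Chebyshev applied to $\E|f(z)|^2 = V(z)$, exactly as in the proof of Lemma~\ref{lemma:concentration-general}.

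For the matching lower bound when $|z|$ is away from the unit circle, fix a small $\alpha > 0$ (to be chosen at the end), and first suppose $\eps \leq |z| \leq 1 - n^{-\alpha}$. Put $\beta := \log 2 / \log(1/|z|)$, which satisfies $\beta \asymp n^{\alpha}$, and take $i_j := (j-1)\lceil \beta\rceil$ for $j = 1, \dots, m$ with $m := \lfloor (\log n)^2\rfloor$. Then $|z|^{i_j}/|z|^{i_{j+1}} = |z|^{-\lceil\beta\rceil} \geq 2$, so this is the lacunary subsequence required by Lemma~\ref{lemma:concentration-general}; moreover $|z|^{i_m} = 2^{-(m-1)+o(1)}$, which comfortably exceeds $V(z)^{1/2}\exp(-n^{o(1)}) = n^{O(\alpha)}\exp(-n^{o(1)})$ because $m = (\log n)^2 = n^{o(1)}$. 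The case $1 + n^{-\alpha} \leq |z| \leq 1/\eps$ is symmetric: run the same construction on the reversed sequence $i_j := n - (j-1)\lceil \log 2/\log|z|\rceil$, where now the dominant term is $z^n$ and $V(z)^{1/2} \asymp |z|^n n^{O(\alpha)}$. In both regimes Lemma~\ref{lemma:concentration-general} delivers $\log|f(z)| = \tfrac{1}{2}\log V(z) + O(n^{o(1)})$ with probability $1-O(n^{-A})$, which translates to conclusion (i) or (ii).

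The substantive case is $||z|-1| < n^{-\alpha}$, where the lacunarity argument fails because consecutive $|z|^i$ are too close. Here I would argue by contrapositive: assuming $z$ is \emph{not} within $n^{-A}$ of any root of unity of bounded order, I will show that the small-ball event $\{|f(z)| \leq \exp(-n^{o(1)})\}$ has probability $\leq n^{-A}$, which combined with the Chebyshev upper bound completes case (i)/(ii). Suppose for contradiction that this small-ball probability exceeds $n^{-A}$. The Nguyen inverse Littlewood--Offord theorem (as cited in the paper) then forces all but $n^{o(1)}$ of the vectors $\{z^i\}_{i=0}^n \subset \C$ to lie in a symmetric generalized arithmetic progression $P$ of rank $r = O_A(1)$ and cardinality $n^{O_A(1)}$. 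This is a multiplicative rigidity statement about $z$: the forward orbit of $z$ has essentially polynomial image in a bounded-rank GAP. Feeding this information into Shalom's quantitative version of Gromov's polynomial growth theorem (also cited in the paper), one concludes that $z$ must lie within $n^{-A}$ of some $\omega$ with $\omega^k = 1$ for $k = O_A(1)$, contradicting our assumption. This yields the desired dichotomy.

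The one genuinely difficult step is the passage from the small-ball assumption to a bounded-order root of unity. The Littlewood--Offord step produces a GAP of polynomial cardinality containing the multiplicative orbit, but the quantitative extraction of ``$z$ is close to a root of unity of bounded order'' from this GAP structure requires carefully tracking how the rank $r$, the cardinality, and the approximation error $n^{-A}$ all depend on $A$ (and the $2+\eps$ moment constant). The other steps---the Chebyshev upper bound, the arithmetic reduction of $V(z)$, and the lacunary construction---are routine once the right parameters $\alpha$ and $m = (\log n)^2$ are chosen, and the whole argument then closes by taking $\alpha$ small enough depending on $\eps$ and $A$.
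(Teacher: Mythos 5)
Your treatment of the regimes away from the unit circle is correct and essentially matches the paper: the Chebyshev upper bound, the geometric-series estimates for $V(z)$, and the explicit lacunary subsequence $i_j=(j-1)\lceil\beta\rceil$ fed into Lemma \ref{lemma:concentration-general} all work (the paper obtains the sharper cutoff $||z|-1|\le \log^2 n/n$ via Lemma \ref{lemma:length}, but your $n^{-\alpha}$ cutoff also isolates a legitimate dichotomy). The genuine gap is in the remaining case near the unit circle. You correctly name the two external inputs (Nguyen's inverse Littlewood--Offord theorem and the Shalom--Tao quantitative Gromov theorem), but the sentence ``feeding this information into Shalom's quantitative version of Gromov's theorem, one concludes that $z$ must lie within $n^{-A}$ of some $\omega$ with $\omega^k=1$'' is not a proof step: Gromov-type theorems apply to groups, or to matrices whose powers grow polynomially, not directly to a complex number whose multiplicative orbit lies in a generalized arithmetic progression. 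The entire content of the paper's argument is the bridge you omit: from the representations $z^i=a_{i,1}v_1+\cdots+a_{i,r}v_r+O(n^{O(1)}e^{-\log^3 n})$ one (a) prunes to approximately linearly independent generators so the integer vectors $\vec a_i$ are well defined and nonzero, (b) finds by pigeonhole a window of indices on which the span of the $\vec a_i$ stabilizes, (c) constructs transition matrices $T_m\in \mathrm{GL}_d(\Q)$ satisfying $T_{m+1}=T_mT_1$, hence $T_m=T_1^m$, with polynomially bounded heights, (d) shows by a prime-divisibility height argument that the eigenvalues of $T_1$ are algebraic integers, so $T_1$ is conjugate to an integer matrix of polynomial growth, and only then (e) applies \cite[Proposition 13.1]{shalom} to produce a vector fixed by $T_1^k$ for some $k=O(1)$, which translates back into $z^k=1+O(n^{-A})$. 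None of steps (a)--(e) appear in your proposal, and you explicitly defer them as ``the one genuinely difficult step''; that is precisely where the proof lives.

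A secondary, concrete problem with your case split: because your dangerous window $||z|-1|<n^{-\alpha}$ is much wider than the paper's $\log^2 n/n$, the nondegeneracy step in (a) --- showing the $\vec a_i$ are nonzero, which requires $|z|^i$ to dominate the approximation error $n^{O(1)}\exp(-\log^3 n)$ --- can fail, since $|z|^n$ may be as small as $\exp(-n^{1-\alpha})$ when $|z|=1-n^{-\alpha}$. So even granting the missing bridge, you should first push Lemma \ref{lemma:concentration-general} all the way up to $|z|=1-\log^2 n/n$ (as the paper does) before entering the inverse Littlewood--Offord argument.
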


\begin{proof}  We may assume $n$ sufficiently large depending on $C,\eps,A$, as the claim is trivial otherwise.

Note that $z \mapsto z^n f(1/z)$ is a Kac polynomial with the same distribution as $f$, so the claims in this lemma for $|z| \geq 1$ will follow from the claims when $|z| \leq 1$.  Thus we may assume without loss of generality that $\eps \leq |z| \leq 1$.

If $\eps \leq |z| \leq 1 - \frac{\log^2 n}{n}$ (say) then the claim (i) follows easily from Lemma \ref{lemma:concentration-general} (and Lemma \ref{lemma:length}), so we may assume that
\begin{equation}\label{xip}
 1 - \frac{\log^2 n}{n} \leq |z| \leq 1.
\end{equation}

The random variable $f(z)$ has mean zero and variance
$$ \E |f(z)|^2 = \sum_{i=0}^n |z|^{2i} \leq n$$
and so by Chebyshev's inequality we certainly have the upper bound
$$ \log |f(z)| \leq n^{o(1)}$$
with overwhelming probability.  If we have
$$ \log |f(z)| \geq - \log^3 n$$
(say) with probability at least $1-n^{-A}$ we are done, so suppose instead that
$$ \P( \log |f(z)| \geq -\log^2 n ) < 1-n^{-A},$$
thus
\begin{equation}\label{fzz}
 \P( |f(z)| < \exp(-\log^3 n) ) > n^{-A}.
\end{equation}
The quantity $f(z) = \sum_{i=0}^n \xi_i z^i$ is a sum of independent random variables, and so \eqref{fzz} is an assertion that the \emph{small ball probability} of this random sum is large.  We can use this to constrain the coefficients $z^i$ of $f(z)$ by means of \emph{inverse Littlewood-Offord theorems}.  There are many such theorems in the literature; we will use \cite[Theorem 2.9]{nguyen}.  We first note from Lemma \ref{lam} that
$$ \P( B^{-1} \leq |\xi-\xi'| \leq B ) \gg 1$$
for some $B = O(1)$, if $\xi'$ is an independent copy of $\xi$.  This is essentially\footnote{In \cite{nguyen} the lower bound on $\P( B^{-1} \leq |\xi-\xi'| \leq B )$ is $1/2$ rather than $\gg 1$, but one can verify that the arguments in that paper are not significantly changed if one alters the lower bound, provided of course one allows all subsequent constants to depend on this new lower bound.} the hypothesis in \cite[(7)]{nguyen} up to some rescalings.  If one applies \cite[Theorem 2.9]{nguyen}, one can then complex numbers $v_1,\ldots,v_r$ for some $r=O(1)$ with the property that for all but at most $\sqrt{n}$ (say) of the numbers $z^i, 0 \leq i \leq n$, one has a representation of the form
\begin{equation}\label{zi-rep}
 z^i = a_{i,1} v_1 + \ldots + a_{i,r} v_r + O( n^{O(1)} \exp(-\log^3 n) )
\end{equation}
where $a_{i,1},\ldots,a_{i,r}$ are integers of magnitude $O(n^{O(1)})$; in particular, by the pigeonhole principle we can find $0 \leq i_0 \leq n-\sqrt{n}$ such that one has a representation \eqref{zi-rep} for all $i_0 \leq i \leq i_0+\sqrt{n}$.  Actually, the results in \cite{nguyen} provide significantly more precise results than this, but these bounds will suffice for our purposes.

It will be convenient to ensure that the generators $v_1,\ldots,v_r$ are approximately linearly independent in a certain sense.  Observe that if we have an approximate linear relation between the $v_1,\ldots,v_r$ of the form
\begin{equation}\label{bvv}
 b_1 v_1 + \ldots + b_r v_r = O( n^{O(1)} \exp(-\log^3 n) )
\end{equation}
for some integers $b_1,\ldots,b_r = O(n^{O(1)})$, not all zero, then after clearing denominators we can eliminate one of the $v_i$ from the basis $v_1,\ldots,v_r$ and divide all the other elements by integers of size $O(n^{O(1)})$ and obtain a new basis of $r-1$ elements for which one still has representations of the form \eqref{zi-rep} (with worse values of implied constants in the $O()$ notation) for all $i_0 \leq i \leq i_0+\sqrt{n}$.  Iterating this observation at most $r$ times, we may assume without loss of generality that there is no\footnote{Strictly speaking, one has to take some care with the asymptotic notation $O()$ in order to make this statement rigorous.  There are several (essentially equivalent) ways in which this can be achieved.  One is to reformulate the current argument (which is written in the context of a fixed $n$) in asymptotic fashion, involving a sequence of values of $n$ tending to infinity, with $O(1)$ now referring to a quantity that is bounded uniformly in $n$, at which point there is no difficulty interpreting the argument here rigorously.  Another approach, which we will not detail here, is to reformulate the argument in the language of nonstandard analysis via the device of forming an ultraproduct, so that $n$ is now a nonstandard natural number rather than a standard one, and the $O()$ notation again has a precise interpretation.  If instead one wishes to stay in the context of a fixed (standard) $n$, then one interprets \eqref{bvv} as the claim that there are no integers $b_1,\ldots,b_r$ of magnitude at most $F(C) n^{F(C)}$, not all zero, for which $|b_1 v_1 + \ldots + b_r v_r| \leq F(C) n^{F(C)} \exp(-\log^3 n)$, where $C$ bounds all the implied constants in previous usages of asymptotic notation (such as \eqref{zi-rep}) and $F: \R^+ \to \R^+$ is a sufficiently rapidly growing function (not depending on $n$) to be chosen later.} linear relation of the form \eqref{bvv}.

Among other things, this approximate linear independence shows (if the $O()$ notation is suitably interpreted) that the $r$-tuple $\vec a_i := (a_{i,1},\ldots,a_{i,r}) \in \Z^r$ appearing in \eqref{zi-rep} is uniquely defined for each $i_0 \leq i \leq i_0+\sqrt{n}$.  From \eqref{xip} and the approximate linear independence we also see that the $\vec a_i$ are all non-zero.

Note that for any $i_0 \leq i_1 \leq i_2 \leq i_0+\sqrt{n}$, the linear span $V_{i_1,i_2}$ of the vectors $\vec a_i$ for $i_1 \leq i \leq i_2$ has dimension between $1$ and $r$, and is non-decreasing in $i_2$ and non-increasing in $i_1$.  By the pigeonhole principle, one can thus find 
$$i_0+0.1 \sqrt{n} \leq i_1 \leq i_2 \leq i_0 + 0.9 \sqrt{n}$$
(say) such that
$$V_{i_1,i_2} = V_{i_1-n^{1/4},i_2+n^{1/4}}$$
(say).  In particular, we have $\vec a_i \in V_{i_1,i_2}$ for all $i_1-n^{1/4} \leq i \leq i_2 + n^{1/4}$.

By the Steinitz exchange lemma, we can find a sequence $\vec a_{j_1},\ldots, \vec a_{j_d}$ for $i_1 \leq j_1 < \ldots < j_d \leq i_2$ and some $d=O(1)$ that span $V_{i_1}$.  Then for any integer $m$ with $-n^{1/4} \leq m \leq n^{1/4}$, the vectors $\vec a_{j_1+m},\ldots, \vec a_{j_d+m}$ are linear combinations of $\vec a_{j_1},\ldots,\vec a_{j_d}$, thus we have
\begin{equation}\label{soi}
 \begin{pmatrix} \vec a_{j_1+m} \\ \ldots \\ \vec a_{j_d+m} \end{pmatrix} = T_m 
\begin{pmatrix} \vec a_{j_1} \\ \ldots \\ \vec a_{j_d} \end{pmatrix} 
\end{equation}
for some (unique) $d \times d$ matrix $T_m$ with rational entries.  From Cramer's rule we see that all entries of $T_m$ have height $O(n^{O(1)})$ (i.e. their numerator and denominator are $O(n^{O(1)})$) for any $-n^{1/4} \leq m \leq n^{1/4}$.

Clearly $T_0$ is the identity matrix.  We claim that
\begin{equation}\label{tim}
 T_{m+1} = T_m T_1
\end{equation}
for any $-n^{1/4} \leq m \leq n^{1/4}-1$, which implies by induction that $T_1$ is invertible and $T_m = T_1^m$ for all $-n^{1/4} \leq m \leq n^{1/4}$.  To see this, we note from \eqref{soi} and \eqref{zi-rep} that
\begin{equation}\label{dose}
 \begin{pmatrix} z^{j_1+m} \\ \ldots \\ z^{j_d+m} \end{pmatrix} = T_m 
\begin{pmatrix} z^{j_1} \\ \ldots \\ z^{j_d} \end{pmatrix} + O(n^{O(1)} \exp(-\log^3 n)).
\end{equation}
Multiplying by $z$, we see that
$$
 \begin{pmatrix} z^{j_1+m+1} \\ \ldots \\ z^{j_d+m+1} \end{pmatrix} = T_m 
\begin{pmatrix} z^{j_1+1} \\ \ldots \\ z^{j_d+1} \end{pmatrix} + O(n^{O(1)} \exp(-\log^3 n))$$
and by comparing this with \eqref{dose} with $m$ replaced by $m+1$ and by $1$, we conclude that
$$
T_{m+1} \begin{pmatrix} z^{j_1} \\ \ldots \\ z^{j_d} \end{pmatrix} = T_m T_1
\begin{pmatrix} z^{j_1} \\ \ldots \\ z^{j_d} \end{pmatrix} + O(n^{O(1)} \exp(-\log^3 n)).$$
Using \eqref{zi-rep} and the approximate linear independence of the $v_1,\ldots,v_r$, we conclude that
$$
T_{m+1} \begin{pmatrix} \vec a_{j_1} \\ \ldots \\ \vec a_{j_d} \end{pmatrix} = T_m T_1
\begin{pmatrix} \vec a_{j_1} \\ \ldots \\ \vec a_{j_d} \end{pmatrix}$$
and from the linear independence of the $\vec a_{j_1}, \ldots, \vec a_{j_d}$ we conclude \eqref{tim}.

We now see that $T_1$ is a matrix in $\operatorname{GL}_d(\Q)$ with the property that $T_1^m$ has entries of height $O(n^{O(1)})$ for all $-n^{1/4} \leq m \leq n^{1/4}$.  At this point we need to establish the claim that the minimal polynomial of $T_1$ is monic over the integers, or equivalently that the eigenvalues of $T_1$ are algebraic integers.  Suppose this were not the case, then we have a relation of the form
$$ a_{d'} T_1^{d'} = a_{d'-1} T_1^{d'-1} + \dots + a_0$$
for some $d'=O(1)$ and some integers $a_{d'},a_{d'-1},\dots,a_0$ with $1, T_1, \dots, T_1^{d'-1}$ linearly independent, with $a_{d'}$ divisible by some prime $p$, and at least one of the $a_0,\dots,a_{d'-1}$ not divisible by $p$.  By induction, one then sees that for any integer $j \geq 0$, one has
$$ a_{d'}^{j+1} T_1^{d'+j} = a_{d'-1,j} T_1^{d'-1} + \dots + a_{0,j}$$
for some integers $a_{d'-1,j},\dots,a_{0,j}$, with at least one of the $a_{i,j}$ not divisible by $p$.  In particular, one of the rational numbers $\frac{a_{i,j}}{a_{d'}^{j+1}}$ has height at least $p^{j+1}$.  On the other hand, from Cramer's rule we see that if $d'+j \leq n^{1/4}$, then these rational numbers must have height $O(n^{O(1)})$.  This leads to a contradiction if one sets $j$ comparable to a small multiple of $n^{1/4}$.  Thus all the eigenvalues of $T_1$ are algebraic integers, so that $T_1$ is conjugate to a matrix $T'_1$ in $\operatorname{SL}_d(\Z)$.  All the powers of $(T'_1)^m$ for $-n^{1/4} \leq m \leq n^{1/4}$ then have entries that are integers of magnitude $O(n^{O(1)})$.  This is a polynomial growth condition on $T'_1$, and one can use results related\footnote{The situation here does not require the full strength of Gromov's theorem from \cite{gromov} (or quantitative versions thereof), and is actually closer to the older work of \cite{milnor} and \cite{wolf} treating polynomial growth in solvable groups.} to quantitative versions \cite{shalom} of Gromov's theorem to then force $T'_1$ and hence $T_1$ to be virtually unipotent.  Indeed, if we apply \cite[Proposition 13.1]{shalom}, we conclude that there exists a non-zero vector $\vec c \in \Z^d$ and a natural number $k = O(1)$ such that $T_1^k \vec c = \vec c$.  From Cramer's rule we can take $\vec c = (c_1,\ldots,c_d)$ to have magnitude $O(n^{O(1)})$.  From \eqref{soi} and \eqref{zi-rep} we see that
\begin{equation}\label{panic}
\sum_{l=1}^d c_l z^{j_l+k} = \sum_{l=1}^d c_l z^{j_l} + O( n^{C} \exp(-\log^3 n) )
\end{equation}
for some $C = O(1)$.  If we had
$$ |\sum_{l=1}^d c_l z^{j_l}| \leq n^{A+C} \exp(-\log^3 n) $$
then by \eqref{zi-rep} this would contradict the approximate linear independence of the $v_1,\ldots,v_r$, the actual linear independence of the $\vec a_{j_1},\ldots, \vec a_{j_d}$, and the non-zero nature of $\vec c$, so we have
$$ |\sum_{l=1}^d c_l z^{j_l}| > n^{A+C} \exp(-\log^3 n) $$
and hence from \eqref{panic} one has $z^k = 1 + O(n^{-A})$, which gives the conclusion (iii).
\end{proof}

As the roots of unity are fairly sparse, they can be avoided for the purposes of obtaining non-concentration bounds on zeroes:

\begin{lemma}[Non-clustering bounds]\label{nclb}  Let $C, \eps > 0$ be constants, and let $n$ be a natural number.
Let $f = f_{n,\xi}$ be a Kac polynomial whose atom distribution $\xi$ has mean zero and variance one with $\E |\xi|^{2+\eps} \leq C$.  Let $B(z_0,r)$ be a ball in the complex plane.  Then one has
$$ N_{B(z_0,r)} \ll n^{o(1)} (1 + nr)$$
with overwhelming probability.  If in addition $B(z_0,2r)$ is disjoint from the unit circle $\{ z \in \C: |z| = 1 \}$, one can improve this bound to
$$ N_{B(z_0,r)} \ll n^{o(1)}$$
with overwhelming probability.  Furthermore, $f$ is non-vanishing with overwhelming probability.
\end{lemma}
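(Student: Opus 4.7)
My plan is to verify each of the three assertions separately. Non-vanishing is essentially immediate: $f \equiv 0$ forces every $\xi_i$ to vanish, so $\mathbf{P}(f \equiv 0) = \mathbf{P}(\xi = 0)^{n+1}$, and since $\E|\xi|^2 = 1$ (together with $\E|\xi|^{2+\eps} \leq C$), the quantity $\mathbf{P}(\xi = 0)$ is bounded away from $1$, making this exponentially small.

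For the non-clustering bounds, the case $r \geq 1$ is trivial (then $N_{B(z_0,r)} \leq n \leq 1+nr$). For $r < 1$ I will apply the general criterion Proposition \ref{critloc} with $G(z) := \tfrac12 \log V(z)$, $V(z) := \sum_{i=0}^n |z|^{2i} = (|z|^{2n+2}-1)/(|z|^2-1)$, verifying its concentration hypothesis via Lemma \ref{cloc-kac}. The inversion symmetry $f(z) \mapsto z^n f(1/z)$ (which preserves the Kac distribution and sends zeros $\zeta \mapsto 1/\zeta$) lets me restrict to $|z_0| \leq 2$; balls around very small $|z_0|$ are handled by noting that $f(z) \approx \xi_0$ is bounded away from $0$ with overwhelming probability by a small-ball estimate along the lines of Lemma \ref{lam}, so I may further assume $|z_0| \in [1/10, 10]$.

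Fix $A$ large, and let $k = k(A) = O(1)$ be the degree bound on the roots of unity appearing in case (iii) of Lemma \ref{cloc-kac}. Let $\mathcal U$ be the union of $n^{-A}$-neighbourhoods of all such low-order roots of unity: a union of $O_A(1)$ small disks on the unit circle. The radial projection $S := \{|z - z_0| : z \in \mathcal U\}$ is a union of $O_A(1)$ intervals of total length $O(n^{-A})$, so a pigeonhole argument in $r' \in [r, 2r]$ produces an $r'$ and a scale $c \asymp r/C_A$ (for a constant $C_A$ depending on $A$) with the annulus $B(z_0, r'+c) \setminus B(z_0, r' - c)$ entirely disjoint from $\mathcal U$ (in the negligible regime $r \lesssim n^{-A}$ I simply enlarge $r$ to $n^{-A/2}$, which does not affect the claimed bounds). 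Lemma \ref{cloc-kac} then verifies the concentration hypothesis of Proposition \ref{critloc} on this annulus, so the proposition yields, with overwhelming probability,
\begin{equation*}
N_{B(z_0,r)}(f) \leq N_{B(z_0,r')}(f) \leq \tfrac{1}{2\pi}\int_{B(z_0,r')} \Delta G + O\bigl(n^{o(1)} c^{-1} r'\bigr) + O\!\left(\int_{\text{annulus}} |\Delta G|\right).
\end{equation*}
The middle term is $O_A(n^{o(1)})$ since $c^{-1} r' = O_A(1)$. The function $(2\pi)^{-1} \Delta G$ equals the first intensity $\rho^{(1)}$ of the complex Gaussian Kac polynomial, which is $(1+o(1))n^2 F((|z|-1) n)$ in an $O(1/n)$-strip around the unit circle (with $F$ integrable, as recalled in the introduction) and bounded away from it. Hence $\int_{B(z_0,r')} \Delta G \lesssim n r$ when $B(z_0, r')$ crosses the unit circle and $O(1)$ otherwise; similarly the annular integral admits the analogous bound with $r$ replaced by $c \leq r$. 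Combining gives $N_{B(z_0,r)} \ll n^{o(1)}(1+nr)$. For the improved bound, the hypothesis that $B(z_0, 2r)$ avoids the unit circle forces the annulus to avoid $\mathcal U$ automatically, while $\Delta G \lesssim \mathrm{dist}(\cdot, \{|z|=1\})^{-2}$ makes both integrals $O(1)$, yielding $N_{B(z_0,r)} \ll n^{o(1)}$.

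The principal technical obstacle I anticipate is a careful bookkeeping of $\int \Delta G$ near the unit circle to establish the sharp $O(nr)$ estimate; this requires either a direct computation from the explicit formula for $V(z)$ or an appeal to the Kac-Rice density formula for the Gaussian Kac polynomial combined with the asymptotic for $F$ given in the introduction. A secondary---essentially routine---subtlety is arranging the pigeonhole avoidance of $\mathcal U$ uniformly across all relevant scales of $r$.
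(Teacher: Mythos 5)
Your overall strategy coincides with the paper's: reduce to Proposition \ref{critloc}, verify its concentration hypothesis via Lemma \ref{cloc-kac}, pigeonhole the annulus away from the $n^{-A}$-neighbourhoods of the bounded-order roots of unity, transfer the exterior region to the interior via the inversion $z \mapsto z^n f(1/z)$, and diagonalize in $A$ at the end. The one substantive difference is the choice of comparison function. You take $G = \tfrac12 \log V$, whose Laplacian is the smooth Gaussian intensity $\approx n^2 F(n(|z|-1))$, and you therefore need the estimate $\int_{B(z_0,r)} \Delta G \ll 1+nr$, which you correctly flag as the main computation; a clean way to get it is to note that $\frac{1}{4\pi}\Delta \log V \, dz$ is a rotation-invariant measure of total mass $n$, and a ball of radius $r<1$ centred near the unit circle meets each circle $\{|z|=t\}$ in an arc of angular measure $O(r)$, so the integral is $O(nr)$ without any asymptotics for $F$. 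The paper instead takes $G(z)=0$ for $|z|\le 1$ and $G(z)=n\log|z|$ for $|z|>1$, so that $\frac{1}{2\pi}\Delta G = n\, d\sigma$ is the singular uniform measure on the circle: the integral bounds become trivial, at the cost of an infinitesimal mollification to make $G$ admissible in Proposition \ref{critloc}. Either route works.

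Two peripheral steps are wrong or incomplete as written. First, the claim that $\xi_0$ is bounded away from $0$ with overwhelming probability is false: the hypotheses permit $\P(\xi=0)=1/2$ (take $\xi=0$ with probability $1/2$ and $\pm\sqrt{2}$ with probability $1/4$ each), and no small-ball estimate in the spirit of Lemma \ref{lam} yields more than a constant lower bound on $\P(|\xi_0|\ge \delta)$. Balls near the origin should instead be handled by monotonicity, $N_{B(z_0,r)} \le N_{B(0,0.9)}$, with $B(0,0.9)$ controlled by Proposition \ref{critloc} using an annulus at radius $\approx 0.9$ where Lemma \ref{cloc-kac}(i) applies; this is what the paper does. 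Second, dismissing $r\ge 1$ as trivial covers only the bound $\ll n^{o(1)}(1+nr)$; the improved bound $\ll n^{o(1)}$ still has content for large $r$. But if $r \ge 1$ and $B(z_0,2r)$ misses the unit circle, then $B(z_0,r) \subset \{|z|\ge 2\}$, and that region is handled by the inversion symmetry together with the bound already established for $B(0,1/2)$. Both slips are repairable without new ideas.
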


\begin{proof}  We apply Proposition \ref{critloc} with the function $G: \C \to \R$ defined by setting $G(z) := 0$ for $|z| \leq 1$, and $G(z) := n \log |z|$ for $|z| > 1$.  Strictly speaking, $G$ is not smooth enough for Proposition \ref{critloc} to apply as stated, but this technical difficulty can be overcome by a routine infinitesimal mollification which we omit here.  One can compute from Green's theorem that
$$ \frac{1}{2\pi} \Delta G = n d\sigma$$
in the sense of distributions, where $d\sigma$ is the uniform probability measure on the unit circle $\{z: |z|=1\}$.  From Proposition \ref{critloc} (with Remark \ref{conc-strong}) and Lemma \ref{cloc-kac}, we conclude that $f$ is non-vanishing with overwhelming probability, and for any fixed $A>0$, any ball $B(z_0,r)$, and any $0 < c \leq r$ with $r \ll n^{O(1)}$ and $c \gg n^{-O(1)}$, one has
$$
 N_{B(z_0,r)}(f) \ll n \int_{B(z_0,r+c)} d\sigma + O( n^{o(1)} c^{-1} r ) $$
whenever the region $B(z_0,r+c) \backslash B(z_0,r+c)$ lies in the annulus $\{ z: 1/10 \leq |z| \leq 10 \}$ (say) and also avoids the disks $B(\omega,n^{-A})$ whenever $\omega^k = 1$ for some $k \leq C_A$, where $C_A$ depends only on $A$.  In particular, under these hypotheses we have
$$
 N_{B(z_0,r)}(f) \ll n^{o(1)} ( nr + c^{-1} r )$$
together with the stronger bound
$$
 N_{B(z_0,r)}(f) \ll n^{o(1)} c^{-1} r$$
when $B(z_0,r+c)$ is disjoint from the unit circle.

From these estimates, we see that
$$ N_{B(0,0.9)}(f) \ll n^{o(1)}$$
(say) with probability $1-O(n^{-A})$, and by setting $c=r$ we also see that
$$ N_{B(z_0, (1-|z_0|)/10)}(f) \ll n^{o(1)}$$
with probability $1-O(n^{-A})$ for any $z_0$ with $1/2 \leq |z_0| \leq 1 - 1/n$.  By letting $z_0$ lie on the unit circle, setting $c$ to be a small multiple of $r$, and enlarging $r$ as necessary in order for $B(z_0,r+c) \backslash B(z_0,r+c)$ to avoid the disks $B(\omega,n^{-A})$, we also conclude that
$$ N_{B(z_0,r)} \ll n^{o(1)} (1 + nr )$$
with probability $1-O(n^{-A})$ for any $0 < r < 1/10$ (say) and $z_0$ on the unit circle.  By diagonalization in $A$, these events in fact hold with overwhelming probability.  From these bounds and covering argument one obtains the required bounds for the contribution of the zeroes on or inside the unit circle; the contribution of the zeroes outside the unit circle can then be handled by exploiting the invariance of the distribution of the zeroes with respect to the transformation $z \mapsto 1/z$.
\end{proof}

Following our treatment of the flat and elliptic polynomials, the next step is to obtain comparability of log-magnitudes.

\begin{proposition}[Comparability of log-magnitudes]\label{clm-kac}
Let $C, \eps > 0$ be constants, and let $c_0 > 0$ be sufficiently small depending on $\eps$.  Let $n$ be a natural number, let $1 \leq k \leq n^{c_0}$ be another natural number, and let $z_1,\ldots,z_k$ be complex numbers such that
$$ 1-n^{-\eps} \le |z_j|  \le 1+n^{-\eps}$$
for $j=1,\ldots,k$.  Let $f_{n,\xi}, f_{n,\tilde \xi}$ be Kac polynomials whose atom distributions $\xi, \tilde \xi$ have mean zero and variance one matching moments to second order with $\E |\xi|^{2+\eps}, |\tilde \xi|^{2+\eps} \leq C$.  Let $F: \C^k \to \C$ be a smooth function obeying the bounds
$$ |\nabla^a F(z)| \leq C$$
for all $0 \leq a \leq 3$.  Then, if $c_0$ is sufficiently small, one has
$$ \E \Big( F(\log |f(z'_1)|, \ldots, \log |f(z'_{k'})|) - F(\log |\tilde f(z'_1)|, \ldots, \log |\tilde f(z'_{k'})|)  \Big) = O(n^{-c_0}),$$
where the implied constant in the $O()$ notation depends on $C,\eps,c_0$.
\end{proposition}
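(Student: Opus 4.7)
The plan is to prove Proposition \ref{clm-kac} in direct parallel with Proposition \ref{clm} for flat polynomials: namely, to apply the log-magnitude two moment theorem (Theorem \ref{tmp-general}) with $\alpha_0 := c_0$, so the only substantive task is to verify the delocalization hypothesis \eqref{maxbound}. Since $c_i \equiv 1$ in the Kac case, this reduces to establishing the bound
$$ |z_j|^i \le n^{-\alpha_1}\, V(z_j)^{1/2}$$
uniformly in $0 \le i \le n$ and $1 \le j \le k$, for some $\alpha_1 > 0$ depending only on $\eps$, where $V(z_j) = \sum_{i=0}^n |z_j|^{2i}$. Once this is in place with $\alpha_1 \gg \eps$, the proposition follows immediately upon choosing $c_0$ sufficiently small (so that both the smallness constraint of Theorem \ref{tmp-general} is satisfied and $k \le n^{c_0}$ remains compatible).

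To verify the delocalization I will parameterize $\delta := |z_j|^2 - 1 \in [-3n^{-\eps}, 3n^{-\eps}]$ and analyze three regimes. In the near-circle regime $|\delta| \le 1/n$, the bound $(1+\delta)^n = O(1)$ forces all $n+1$ summands of $V(z_j) = \sum_{i=0}^n (1+\delta)^i$ to be comparable, so $V(z_j) \asymp n$ while $\sup_i |z_j|^i \le e^{1/2}$, giving ratio $O(n^{-1/2})$. In the exterior regime $1/n < \delta \le 3 n^{-\eps}$ the geometric sum gives $V(z_j) = \frac{(1+\delta)^{n+1}-1}{\delta} \gtrsim (1+\delta)^{n+1}/\delta$, while the largest individual term is $(1+\delta)^n$, so the ratio is $O(\delta^{1/2}) = O(n^{-\eps/2})$. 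The interior regime $-3n^{-\eps} \le \delta < -1/n$ is symmetric, with $V(z_j) \gtrsim 1/|\delta|$ and individual terms bounded by $1$, again producing $O(n^{-\eps/2})$.

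Combining the three cases yields the delocalization with $\alpha_1 = \eps/2$ (uniformly across the annulus), and Theorem \ref{tmp-general} then delivers the required bound $O(n^{-c_0})$. I do not anticipate any serious obstacle: unlike the concentration of $\log|f(z)|$ itself, which, as Lemma \ref{cloc-kac} warns, genuinely degenerates near roots of unity because of arithmetic Littlewood--Offord obstructions, the present delocalization is a purely algebraic statement about the geometric sum $\sum_i |z|^{2i}$ and is entirely insensitive to such obstructions. This is precisely why the Lindeberg swap underlying Theorem \ref{tmp-general} can proceed throughout the annulus $\{1 - n^{-\eps} \le |z| \le 1 + n^{-\eps}\}$ with no special treatment near the unit roots.
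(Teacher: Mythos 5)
Your proposal is correct and follows exactly the paper's route: the paper also deduces the proposition immediately from Theorem \ref{tmp-general} by noting the lower bounds on $V(z_j)^{1/2}$ (namely $V(z_j)^{1/2}\gg n^{\eps/2}$ for $|z_j|\le 1$ and $V(z_j)^{1/2}\gg |z_j|^n n^{\eps/2}$ for $|z_j|\ge 1$), which is precisely your delocalization bound with $\alpha_1=\eps/2$. Your three-regime computation of the geometric sum just makes explicit what the paper leaves as a one-line remark, and your observation that the root-of-unity obstruction is irrelevant here is also accurate.
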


\begin{proof}  This is immediate from Theorem \ref{tmp-general}, noting that for $1-n^{-\eps} \leq |z_j| \leq 1$ one has $V(z_j)^{1/2} \gg n^{\eps}$ and for $1 \leq |z_j| \leq 1+n^{-\eps}$ one has $V(z_j)^{1/2} \gg |z_j|^n n^\eps$.
\end{proof}

To prove Theorem \ref{tmt-kac}, we would like to apply Theorem \ref{replace} (with $r_0 := 1$ and $a_0 := 3$), after first performing the rescaling
$$ f'(z) := f( 10^{-3} r z )$$
and replacing the $z_j$ by $z'_j := z_j / (10^{-3} r)$; note that the correlation functions rescale according to the law
$$ \rho^{(k)}_{f'}(w_1,\ldots,w_k) := (10^{-3} r)^{2k} \rho^{(k)}_f(10^{-3} r w_1,\ldots,10^{-3} r w_k).$$
Unfortunately, a difficulty arises: the bounds in Lemma \ref{nclb} on the zeroes of $f$, when rescaled to $f'$, do not quite give the non-clustering bounds 
\begin{equation}\label{ncl}
N_{B(z'_i,r')}(f') \leq C n^{1/A} (r')^2
\end{equation}
with probability $1-n^{-A}$ required for the hypotheses of Theorem \ref{replace}; more precisely, this bound is obtained in the range $1 \leq r' \leq 10^2$ (say), but not necessarily for larger values of $r'$.

However, if one inspects the proof of Theorem \ref{replace}, one sees that the only place in that argument in which the non-clustering bound \eqref{ncl} is needed in the range $r' > 100$ is in the proof of Lemma \ref{squarebound}.  Thus, if we can find an alternate proof of that lemma in this situation, we can still obtain the conclusion of Theorem \ref{replace}, which will give Theorem \ref{tmt-kac}.  Fortunately, in the case when all the $|z_j| < 1$, the bounds in Lemma \ref{cloc-kac} allow one to do this as follows.  Firstly, if $K_j$ is the function from Lemma \ref{squarebound} (applied to $f'$ and $z'_j$ instead of $f$ and $z_j$, of course), then from the triangle inequality one has the crude deterministic bound
$$ \|K_j \|_{L^2} \ll n^{O(1)}.$$
Thus by Lemma \ref{sampling}, if one selects $m := n^C$ points $w_1,\ldots,w_m \in B(z_j,50)$ uniformly at random for some sufficiently large fixed $C$ (independent of $A$), one has with probability $1-O(n^{-\delta})$ that
$$ \|K_j \|_{L^2}^2 \ll 1 + \frac{1}{m} \sum_{i=1}^m |K_j(w_i)|^2.$$
On the other hand, from Lemma \ref{nclb} and the union bound, we see that with probability $1-O(n^{-\delta})$, one has $K_j(w_i) = O(n^{o(1)})$ for all $i=1,\ldots,m$.  This gives the desired conclusion for Lemma \ref{squarebound}.  

Finally, we need to address the situation in which some of the $z_j$ are in the regime $|z_j|>1$ rather than $|z_j|<1$.  In such cases, observe that as $H_j$ is orthogonal to the function $n\log(10^{-3} r |z|)$, we may replace $K_j$ in the proof of Theorem \ref{replace} by $K_j - n \log(10^{-3} rz) H_j$ without affecting the rest of the argument. One may then use the second part of Lemma \ref{cloc-kac} rather than the first part, and the previous argument then goes through as before.   (As a matter of fact, we do not need to subtract off the function $L_j$ in this argument.)

To establish Theorem \ref{tmt-kac-real}, we need a rescaled level repulsion estimate:

\begin{proposition}[Level repulsion]\label{level-repuls-kac}
Let $\eps>0$, and let $C > 1$ be a sufficiently large constant.  Let $n$ be a natural number, let $r$ be a radius with
$$ \frac{1}{n} \leq r \leq n^{-\eps},$$
and let $x,y \in \R$ and $z \in \C$ be such that
\begin{equation}\label{xyz-kac}
r \leq \frac{1}{n} + ||x|-1|, \frac{1}{n} + ||y|-1|, \frac{1}{n} + ||z|-1| \leq 2r
\end{equation}
and
and
$$ |x-y|, |\Im z| \leq r/C.$$
Let $f = f_{n,\xi}$ be a Kac polynomial whose atom distribution is drawn from the real gaussian ensemble $N(0,1)_\R$.  Then we have the pointwise bounds
\begin{equation}\label{rho-20a-kac}
\rho^{(2,0)}_{\tilde f}(x,y)  \ll |x-y| / r^3
\end{equation}
and 
\begin{equation}\label{rho-01a-kac}
\rho^{(0,1)}_{\tilde f}(z)  \ll |\Im z| / r^3,
\end{equation}
where the implied constants depend on $C$.
\end{proposition}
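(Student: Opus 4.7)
The plan is to follow the template of Propositions \ref{level-repuls} and \ref{level-repuls-elliptic} via Lemma \ref{lemma:RB1}. The new feature is that the natural length scale for the zeroes of the Kac polynomial near the unit circle is $r$ rather than $1$; we will track the $r$-dependence of the constants in Lemma \ref{lemma:RB1}, and this is what produces the $r^{-3}$ factor in the conclusion. First, by the distributional invariance of Kac polynomials under $z\mapsto z^n f(1/z)$ (a consequence of the i.i.d. coefficients), together with the fact that the inversion $z\mapsto 1/z$ preserves the scale $r$ to leading order, we may reduce to the case $|x|,|y|,|z|\le 1$, so that $||x|-1|,||y|-1|,||z|-1|\sim r$.

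We then apply Lemma \ref{lemma:RB1} with the constant holomorphic normalisation $R(z)\equiv\sqrt{r}$ and disk radius $r_0 := c_1 r$ for a sufficiently small absolute constant $c_1$. Setting $\bv_x:=(x^i)_{i=0}^n$ and $\bw_x:=(ix^{i-1})_{i=0}^n$, we have $v(z)=\sqrt{r}\,\bv(z)$ and $v'(z)=\sqrt{r}\,\bw(z)$. Using the closed form $V(z)=|\bv(z)|^2=(|z|^{2(n+1)}-1)/(|z|^2-1)$, a direct computation gives $V(z)\sim 1/r$ uniformly for $z\in B(x,r_0)$ in the regime $1/n\le r\le n^{-\eps}$, so $|v(z)|^2 = r V(z) = O(1)$; this verifies hypothesis \eqref{vo}. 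For hypothesis \eqref{vos}, the identities $\bv_x\cdot\bw_x=V'(x)/2$ and $|\bw_x|^2=(V''(x)+V'(x)/x)/4$ give
$$ |v(x)\wedge v'(x)|^2 = r^2|\bv_x\wedge\bw_x|^2 = \tfrac{r^2}{4}\bigl(V(x)V''(x)+V(x)V'(x)/x-V'(x)^2\bigr), $$
and a scaling analysis (splitting into the sub-regimes $nr=O(1)$ and $nr\gg 1$, using the closed form for $V$) will show the parenthesised quantity is of order $1/r^4$, so that $|v(x)\wedge v'(x)|\sim 1/r$. This is substantially stronger than the lower bound $\geq C^{-1}$ formally required by Lemma \ref{lemma:RB1}.

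Retracing the proof of Lemma \ref{lemma:RB1} with this sharper input, the Taylor step yields $|v(x)\wedge v(x+\delta)|\gg\delta/r$ for $\delta\le r/C$, while the Cauchy bound $|v''|\ll 1/r_0^2\sim 1/r^2$ gives $\dist(v'(\cdot),\Span(v(x),v(x+\delta)))\ll\delta/r^2$; substituting into the Kac--Rice/Gaussian upper bound (as in the display \eqref{vowed}) yields the real repulsion $\rho^{(2,0)}_f(x,x+\delta)\ll\delta/r^3$. The complex repulsion $\rho^{(0,1)}_f(x+\sqrt{-1}\delta)\ll\delta/r^3$ then follows by the completely analogous adaptation of the complex-case argument in Section \ref{repulse-sec} (via the real-imaginary decomposition of $\bv_{x+\sqrt{-1}\delta}$).

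The main obstacle will be the scaling computation $V(x)V''(x)+V(x)V'(x)/x-V'(x)^2\sim 1/r^4$: individually both $VV''$ and $(V')^2$ are of order $1/r^4$, so we must rule out cancellation between them. This will be handled by asymptotic expansion of $V,V',V''$ in the two sub-regimes $nr=O(1)$ and $nr\gg 1$ separately, yielding a nonzero leading-order coefficient in each case — essentially a quantitative non-collinearity of $\bv_x$ and $\bw_x$ that persists uniformly across the entire parameter range.
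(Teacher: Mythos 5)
Your proposal is correct and follows essentially the same route as the paper's proof: work at the natural length scale $r$ (the paper rescales $\tilde f'(z):=\tilde f(rz)$ and applies Lemma \ref{lemma:RB1} at unit scale, you instead track the $r$-dependence of the constants directly), choose a normalization $R$ making $|v|=O(1)$ on a ball of radius $\sim r$, and verify the strengthened wedge bound $|v(x)\wedge v'(x)|\gg 1/r$, which via \eqref{vowed} and the Cauchy/Taylor bookkeeping yields $\delta/r^3$. The paper takes $R(z)=(1-z^2+\tfrac{100}{n})^{1/2}$ rather than your constant $\sqrt{r}$, but since $v(x)\wedge v'(x)=R(x)^2\,\bv_x\wedge\bw_x$ with $|R(x)|^2\sim r$ there, the two normalizations produce the same wedge and your choice is equally admissible. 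The one place you create unnecessary work is the step you flag as the ``main obstacle'': the potential cancellation in $VV''+VV'/x-(V')^2$ never needs to be confronted, because $|\bv_x\wedge\bw_x|^2=\sum_{0\le i<j\le n}(j-i)^2x^{2(i+j-1)}$ is a sum of nonnegative terms, and restricting to the $\gg r^{-2}$ pairs with $i,j\le c/r$ and $j-i\gg 1/r$ (for which $x^{2i},x^{2j}\gg 1$, using $r\ge 1/n$ so that such indices exist) already gives the lower bound $\gg r^{-4}$ uniformly across both of your sub-regimes, with no asymptotic expansion required.
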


\begin{proof}  We will work in the regime when $|x|, |y|, |z| \leq 1+\frac{1}{n}$; the opposing case $|x|, |y|, |z| \geq 1-\frac{1}{n}$ can be treated similarly, and in any event is essentially equivalent to the former case after using the symmetry $z \mapsto 1/z$ of the distribution of the zeroes of a Kac polynomial.

We work with the rescaled polynomials $\tilde f'(z) := \tilde f(rz)$, and note that it suffices to show that
\begin{equation}\label{rho-20a-kac-rescaled}
\rho^{(2,0)}_{\tilde f'}(x',y')  \ll |x'-y'| 
\end{equation}
and 
\begin{equation}\label{rho-01a-kac-rescaled}
\rho^{(0,1)}_{\tilde f'}(z')  \ll |\Im z'|
\end{equation}
where $x' := x/r$, $y' := y/r$, $z' := z/r$.

Applying Lemma \ref{lemma:RB1}, and then undoing the rescaling, it suffices to show that
\begin{align}
|v(z)| &\ll 1 \label{lave}\\
|v(x) \wedge v'(x)| &\gg 1/r \label{sticky}
\end{align}
whenever $z \in B(x,\frac{1}{Cr})$, $-1-\frac{1}{n} \leq x \leq 1+\frac{1}{n}$ is such that
$$ r \leq \frac{1}{n} + ||x|-1| \leq 2r,$$
and
$$ v(z) := R(z) (z^i)_{i=0}^n$$
where we can for instance take
$$ R(z) := (1-z^2+\frac{100}{n})^{1/2}.$$
To prove \eqref{lave}, we expand
$$ |v(z)|^2 \leq |1-z^2+\frac{100}{n}| \sum_{i=0}^n |z|^{2i}.$$
Bounding $|1-z^2+\frac{100}{n}| \ll 1-|z|^2$ and
$$ \sum_{i=0}^n |z|^{2i} \leq \sum_{i=0}^\infty |z|^{2i} = \frac{1}{1-|z|^2}$$
we obtain \eqref{lave}.  To obtain \eqref{sticky}, we compute
$$ v'(x) = R(x) \left( \left(\frac{i}{x} - \frac{x}{1-x^2+\frac{100}{n}}\right) x^i \right)_{i=0}^n$$
and so
$$ |v(x) \wedge v'(x)|^2 = \left|1-x^2+\frac{100}{n}\right|^2 \sum_{0 \leq i,j \leq n} \frac{|i-j|^2}{x^2} x^{2i} x^{2j}.$$
But $1-x^2+\frac{100}{n}$ is comparable to $r$, and $x^{2i}$ is comparable to $1$ when $i = O(1/r)$, and the claim \eqref{sticky} follows.
\end{proof}

Theorem \ref{tmt-kac-real} then follows from Theorem \ref{replace-real} after using the same rescaling used to establish Theorem \ref{tmt-kac},
 and after again using the alternate proof of Lemma \ref{squarebound} indicated above. To be more precise, we use a modification of Theorem \ref{replace-real}
 where   Lemma \ref{nclb} is used as a substitute for the non-clustering axiom. This lemma is sufficiently strong for the proof of Lemma \ref{wlr} that allows us to pass 
 from the real case to complex case.

\vskip2mm 

\acks We would like to thank O. Nguyen, M. Krishnapur and T. Reddy for useful comments. 
T. Tao is  partially supported by a Simons Investigator award from the Simons Foundation and by NSF grant DMS-0649473.  V. Vu is supported by research grants from the NSF and the Air Force.

%\section{Universality for Hyperbolic polynomials} 

\end{document}